\renewcommand{\epsilon}{\varepsilon}
\renewcommand{\phi}{\varphi}
\newcommand{\spt}{\mathop{\rm spt}}
\newcommand{\id}{\mathrm{id}}
\newcommand{\M}{\mathbb{M}}
\def\lip{\mathop{\rm lip}\nolimits}
\def\Tan{\mathop{\rm Tan}\nolimits}
\renewcommand{\P}{\mathcal{P}}
\renewcommand{\R}{{\mathbb R}}   
\renewcommand{\C}{\mathcal{C}}
\newcommand{\Oloc}{\mathcal{O}_{\rm loc}}
\renewcommand{\d}{\,d}
\renewcommand{\div}{{\rm div}}
\newenvironment{proofof}[1]{\smallskip\noindent{\textbf{Proof~of~#1.}}%
  \hspace{1pt}}{\hspace{-5pt}{\nobreak\quad\nobreak\hfill\nobreak%
    $\square$\vspace{2pt}\par}\smallskip\goodbreak}
\theoremstyle{thmstyleone}%
\newtheorem{theorem}{Theorem}[section]
\newtheorem{proposition}{Proposition}[section]%
\newtheorem{lemma}{Lemma}[section]%
\theoremstyle{thmstyletwo}%
\newtheorem{example}{Example}%
\newtheorem{remark}{Remark}[section]%
\theoremstyle{thmstylethree}%
\newtheorem{definition}{Definition}[section]%
\begin{document}

\title[Optimal control of nonlocal continuity equations: numerical solution]{Optimal Control of Nonlocal Continuity Equations: Numerical Solution}
  
\author*[1]{\fnm{Roman} \sur{Chertovskih}}\email{roman@fe.up.pt}

\author[2]{\fnm{Nikolay} \sur{ Pogodaev}}\email{pogodaev@math.unipd.it}

\author[1]{\fnm{Maxim} \sur{ Staritsyn}}\email{staritsyn@fe.up.pt}

\affil*[1]{\orgdiv{Research Center for Systems and Technologies (SYSTEC),\\  Faculty of Engineering}, \orgname{University of Porto}, \orgaddress{\street{Rua Dr. Roberto Frias,~s/n}, \city{Porto}, \postcode{4200-465},  \country{Portugal}}}

\affil[2]{\orgdiv{Department of Mathematics ``Tullio Levi-Civita''}, \orgname{University of Padova}, \orgaddress{\street{Via Trieste 63}, \city{Padova}, \postcode{35121}, \country{Italy}}}

\abstract{The paper addresses an optimal ensemble control problem for nonlocal continuity equations on the space of probability measures. We admit the general nonlinear cost functional, and an option to directly control the nonlocal terms of the driving vector field. For this problem, we design a descent method based on Pontryagin's maximum principle (PMP). To this end, we derive a new form of PMP with a decoupled Hamiltonian system. Specifically, we extract the adjoint system of linear nonlocal balance laws on the space of signed measures and prove its well-posedness. 
As an implementation of the designed descent method, we propose an indirect deterministic numeric algorithm with backtracking. We prove the convergence of the algorithm and illustrate its modus operandi by treating a simple case involving a Kuramoto-type model of a population of interacting oscillators. }

\keywords{optimal control, nonlocal continuity equations, Pontryagin’s maximum principle, descent method, indirect algorithms for optimal control}

\pacs[MSC Classification]{49K20, 49J45, 93C20}

\maketitle

\begin{center}
\textbf{To the blessed memory of Professor Fernando Lobo Pereira}
\end{center}

\section{Introduction}\label{sec:intro}
Nonlocal continuity equations on the spaces of probability measures arise as 
macroscopic mathematical models of multi-agent dynamical systems describing the time evolution of large ensembles (beams, crowds, swarms, populations, networks) of structurally identical objects (e.g., elementary particles, people, animals, ``neurons'' of natural or artificial neural networks etc.). The main idea is to treat the many-particle dynamics \emph{as a whole} by focusing on its ``statistical'' behavior assuming that the agents are homotypic and, therefore, indistinguishable.

Passing to the limit in the number of agents, a large set of individuals (described by a system of many similar ODEs) is replaced by their continual probability distribution, named the ``mean field'' (driven by a single transport PDE). This idea, rooted in statistical mechanics~\cite{Dobrushin1979},
has been found useful in different areas of applied mathematics such as mathematical biology \cite{Mogilner1999, CuckerSmale, Carrillo2010, Carrillo2014}, modeling of pedestrian and urban traffic \cite{Cristiani,Piccoli,Colombo2011}, mathematical neuroscience \cite{Laing2018} and even theoretical foundations of artificial intelligence \cite{WeinanJiequn,SChPP-2022,Bonnet2021AMT,Pham1}, just to name a few.

Recent results in the analysis on the space of measures, achieved in  the works of L.~Ambrosio, N. Gigli, J. Lott, F. Otto, F. Santambrogio, G.  Savar\'{e}, C.~Villani, and others, have been proved fruitful for mathematical control theory, largely spurred by the variety of mentioned applications and the needs of control engineering. The starting point was the derivation of a mathematically rigorous ``mean field limit'' of the classical multi-agent optimal control problem \cite{Fornasier2014-me,Fornasier2014} (see also  \cite{fornasier2019,Bonnet-Rossi-2021}). In the consequent few years, the cornerstones of the classical optimal control theory --- such as
Pontryagin's maximum principle (PMP) \cite{averboukh2022pontryagin,Bonnet2019,BonnetRossi2019,Colombo2011,Pogodaev2019,POGODAEV20203585,Pogo-ContEq,BonnetFrankowska2021a}, and the dynamic programming method~\cite{Cavagnari2018,Marigonda2019,Averboukh2018-1,Averboukh2018-2} --- were extended to the area of mean field control.

The mean field PMP, which is at the focus of the present paper, was obtained on different levels of generality by various mathematical strategies. Its particular version was first derived in \cite{Pogo-ContEq} for a specific ``shepard's'' problem over the local continuity equation, and subsequently, for a general linear problem with relaxed controls \cite{Pogodaev2019}; these version of PMP are mainly reconstructed from the differential properties of flows of the driving vector field by standard analytical methods such as Filippov's lemma. A result of the similar spirit for another particular local problem was recently obtained in \cite{Bonnet2021AMT} as a specification of a more general PMP \cite{BonnetRossi2019} by an original technique of generalized Lagrange multipliers on the convex subset of Radon measures with unit mass. Notice that, in the local case, PMP takes the familiar form as it is formulated in terms of a certain \emph{decoupled} optimality system with an explicit backward adjoint equation --- a non-conservative transport PDE.

The first result in this line was obtained in \cite{Bongini2017} for a particular ``bi-level'' optimization problem; a natural strategy was 
to pass to the limit in the usual PMP conditions for conventional control problems obtained by the ``finite-agent'' approximations in the Dobrushin's framework. Similar arguments, based on a finite-dimensional approximation and Ekeland's variational principle, were used in \cite{PogStar2022} to prove an impulsive version of PMP for a nonlocal transport equation with states being measure-valued curves of bounded variation. For general (non-impulsive) nonlocal transport equations, PMP was first proved by an appropriate extension of the classical technique of needle-shaped control variations for problems without \cite{BonnetRossi2019} and with  \cite{Bonnet2019} additional state-constraints. Another approach, relying on an appropriate linearization of the nonlocal dynamics, was further proposed in \cite{BonnetFrankowska2021a}. A different method to derive the necessary optimality conditions for mean-field control problems was suggested in \cite{Burger2021} exploiting an appropriate generalization of Karush–Kuhn–Tucker conditions. Also, in \cite{Carmona2015,WeinanJiequn,Siska}, alternative versions of the mean-field PMP were obtained for stochastic optimal control problems. 

\subsection{Numerical solution: mainstream approaches and their pitfalls}

The use of existing analytical methods is limited to the simplest mean-field control problems, while the transition of these results to the numerical context is fraught with critical technical difficulties. Here, PMP would be a promising footing {if} it were not for a number of significant flaws. The key drawback is due to the mentioned coupling in the Hamiltonian system. 
The state of such a Hamiltonian equation~--- a measure on the cotangent bundle of the state space~--- is \emph{always singular}, even if the solution of the primal continuity equation~ --- a measure on the state space~--- has a density. This makes it impossible to solve the Hamiltonian system by the standard numerical schemes and, consequently, the existing forms of PMP do not provide a descent algorithm.

In the finite-dimensional case, a wide range of various direct and indirect numerical methods are described in numerous works. 
For nonlocal continuity equations, the numerical solution of optimal control problems still remains a burning question, which is principal for the transfer of the mean-field control theory to the practice of control engineering. The mainstream approaches are represented by the following two families:
\begin{enumerate}
\item \emph{Semi-direct (finite-particle) method}: Approximation of the initial distribution by a discrete measure and transformation of a distributed control system to a high-dimensional ODE. The resulting finite-dimensional control problem is solved directly or using special techniques such as, e.g., ``random batch''  methods~\cite{Zuazua2021}.

\item \emph{Direct method}: Total discretization of a nonlocal equation and reduction of a variational problem to mathematical programming.
\end{enumerate}

In practice, both the mentioned approaches typically lead to unsatisfactory results. The first one returns one to a high-dimensional classical optimal control problem followed by the ``curse of dimensionality''; in fact, this approach rejects  the very heart of the mean-field approximation along with all profits of the statistical averaging, while it draws us back to the need of keeping track of all individual representative of a large population. The second approach leads to a complex (high-dimensional, nonlinear and non-convex) mathematical programming problem, which is not always satisfactory solved even by commercial solvers. Here, the main difficulty is the presence of non-local terms depending on the density distribution over the entire spatial grid making the computations much more demanding. This feature also leads to a dramatic loss in the efficiency of parallelization, since integration steps require interprocessor communications of the ``all-to-all'' pattern.

In contrast to the classical setting, the bibliography on \emph{indirect} numeric algorithms for optimal mean-field control is poor. 
There are only few results \cite{Bonnet2021AMT,SChPP-2022,Pogo-Arx,Sinigaglia2021OptimalCO,annunziatoFokkerPlanckControl2013}, all focusing on particular problems, and relying on adequate necessary optimality conditions. The work \cite{Pogo-Arx} deals with the so-called ``shepard's problem'', where one has to steer the population of non-interacting individuals to a given target set; the proposed numeric algorithm is based on a specific form of PMP. On the conceptual level, the algorithm \cite{Bonnet2021AMT} (named in the cited paper a ``shooting method'') is a variant of the classical Krylov-Chernous'ko algorithm --- probably the first indirect algorithm based on PMP in the history of optimal control. The convergence of the algorithm essentially depends on the convexity of the cost functional, and is not guaranteed in general, even for the finite-dimensional case $\mu_t = \delta_{x(t)}$. An alternative algorithm was proposed in \cite{SChPP-2022} for the linear problem of ensemble control employing an exact formula for the increment of the cost functional and feedback control variations.  In \cite{Sinigaglia2021OptimalCO}, a version of the gradient descent method was constructed for a mean-field optimal control problem over a nonlocal Fokker-Planck-Kolmogorov equation modeling interactions in a Kuramoto type model: the first variation of the objective functional and the adjoint equation are obtained by a formal Lagrange method due to the model specifics. Finally, to the best of our knowledge, there are no results of this sort for the general \emph{$\mu$-nonlinear} problem.

\subsection{Goals, contribution, and organization of the paper}

In the present work, we put forth an indirect numerical method for optimal mean-field control.  Namely, we design a  PMP-based indirect deterministic numeric algorithm with backtracking line search for a class of optimal ensemble control problems involving nonlocal continuity equations in the space of probability measures. The method can be viewed as an adequate version of the classical gradient descent method, and demonstrates encouraging results in a series of numeric experiments. To our knowledge, this is the first \emph{indirect descent algorithm} for mean-field control problems, \emph{nonlinear} in measure. 

The derivation of the algorithm is based on a set of new theoretical results, which are of independent interest. First, we derive the linearized form of the original nonlocal transport PDE. In contrast to \cite{BonnetFrankowska2021a}, our arguments apply to nonlocal perturbations of the vector field, and therefore, cover the case, when the control is injected into the nonlocal term of the dynamics. As a byproduct, we compute the first variation of the cost functional within the class of weak variations of the control function. Another contribution is a new, equivalent articulation of PMP, where the Hamiltonian equation on the cotangent bundle of the state space is decoupled into the primal (forward) and dual (backward) parts; the dual systems turns to be a system of nonlocal linear balance laws (continuity equations with sources).

The rest of paper is organized as follows: A statement of the optimal control problem is presented in Sect.~\ref{subsec:problem}. Section~\ref{sec:prelim} collects the necessary notation, and several noteworthy facts from the topology, analysis, and differential calculus over the space of probability measures. In Sect.~\ref{subsec:flow_deriv}, we introduce the concept of a flow of a nonlocal vector field and calculate a ``directional derivative'' of the flow along a nonlocal vector field. Sections~\ref{sec:incr}-\ref{sec:descent} dwell on a simplified version of the stated optimization problem, where the running cost rate is lifted, and the driving vector field is affine in the control variable; this technical simplification is not critical but enables us to shorten the presentation of the main results.  

In Section~\ref{sec:incr}, we exhibit two standard representations of the increment of the cost functional. The first one is formulated in the language of flows of nonlocal vector fields, while the second formula is written down in terms of the mentioned Hamiltonian system.
In Section~\ref{sec:adjoint}, noting that none of these representations are suitable for numerical purposes, we derive the third version of the cost increment, which relies on the notion of adjoint equation. The corresponding numerical algorithm is presented in Section~\ref{sec:descent}. We study the convergence of the algorithm, discuss certain principal aspects of its technical implementation and demonstrate its modus operandi by treating a simple but illustrative case, namely, an aggregation problem for a mean-field Kuramoto-type oscillatory model. Finally, in Section~\ref{sec:gen}, the obtained results 
are extended to the general problem, involving the running cost and the nonlinear dependence on the control variable.

\subsection{Problem statement}
\label{subsec:problem}

Given the data $ V\colon I\times \R^{n}\times \P_{2}(\R^{n})\times U\to \R^{n}$,
  $L\colon I\times \R^{n}\times \P_{2}(\R^{n})\times U\to \R$,
  $\ell\colon \P_{2}(\R^{n})\to \R,$
consider the following optimal control problem $(P)$ on a fixed finite time interval $I\doteq [0,T]$:
\begin{gather}
{
\text{Minimize}\quad \mathcal{I}[u]\doteq \int_{0}^{T}L\left(t,\mu_{t},u(t)\right)\d t + \ell(\mu_T)\quad\text{subject to}} \label{eq:cost}\\
\partial_t\mu_t + \div_x \left(V_{t}\left(x,\mu_{t},u(t)\right) \, \mu_t\right) = 0,\quad \mu_0=\vartheta,\label{eq:conteq}\\
u \in \mathcal U.
\label{eq:control}
\end{gather}
We assume that control signals are functions $t \mapsto u(t)$ of time variable only, and take values in a given set $U \subseteq \R^m$, i.e., $\mathcal U \doteq L^\infty(I; U)$, where $L^\infty$ is equipped with the weak* topology $\sigma(L^\infty, L^1)$.

Optimization problems of this sort appear in the framework of multi-agent dynamical systems, where the measure $\mu_t$ represents the spatial distribution of agents at time $t$. The specified class of controls implies that $u$ acts \emph{simultaneously} on all agents (one can imagine that we are able to influence a common agents' environment rather than agents in person).
An important example of the nonlocal vector field is 
\begin{equation}
  V_{t}(x,\mu,u) = f_{t}(x,u) + \int K_{t}(x-y,u)\d\mu(y),\label{VF-conv}
\end{equation}
where \( f \) models an external force pushing the agents and \( K \) stands for their internal interaction. Typical terminal cost functionals are
\begin{align*}
  \ell_{1}(\mu) = \int l(x)\d\mu(x) + \iint W(x,y) \d \mu(x)\d \mu(y), \quad
  \ell_{2}(\mu) = \frac{1}{2}\left\lvert\int x\d \mu(x)-m_{T}\right\rvert^{2}.
\end{align*}
Here, \( \ell_{1} \) represents the potential ($l$) and interaction ($W$) energy terms, while  \( \ell_{2} \) is related to the \emph{averaged control problem}~\cite{Zuazua2014}, where the goal is to bring the expectation of the distribution $\mu$ to some target position $m_T$. Finally, common versions of running cost term are
\begin{displaymath}
  L_{1}(t,\mu,u) = \frac{1}{2}\vert u\vert^{2},\quad
  L_{2}(t,\mu,u) = \frac{1}{2}\left\lvert \int x\d\mu(x) -m(t) \right\rvert^{2}.
\end{displaymath}
\( L_1 \) represents the ``total energy'' of the control action, and \( L_2 \) captures the problem of following a desired path \( t\mapsto m(t) \).

\section{Preliminaries}
\label{sec:prelim}

In this section, we introduce some notations, 
and recall several useful facts from analysis on the metric space of probability measures.

\subsection{Notation}

Throughout the paper, we use the following notation:

\begin{itemize}
  \item \( \lvert\cdot\rvert \) the Euclidean norm on \( \mathbb{R}^{n} \).
  \item \( \bm B_r\subset \mathbb{R}^{n} \) the closed unit ball of radius \( r \) centered at the origin.
  \item \( f_{\sharp}\mu \) pushforward measure for \( \mu\in \mathcal P(\mathbb{R}^n) \) and a Borel function \( f\colon \mathbb{R}^n\to \mathbb{R}^m \).
  
  \item \(\spt \mu\) the support of a measure $\mu$.
  
  \item \( \M^{m,n} \) the space of matrices \( A \) with \( m \) rows and \( n \) columns. 
  \item $x =
        \begin{pmatrix}
          x^{1}\\
          \vdots\\
          x^{n}
        \end{pmatrix}$ an  \( n \)-dimensional column vector, i.e., \( x\in \M^{n,1} = \R^{n}\).
  \item $p =
        \begin{pmatrix}
          p_{1}&\cdots&p_{n}
        \end{pmatrix}$ an \( n \)-dimensional row vector, i.e., \( p \in \M^{1,n}=(\R^{n})^{*} \).
  \item A vector field \( f \) on \( \R^{n} \) is a family of \( n \) real-valued functions \( f^{i}=f^{i}(t,x) \), \( i=1,\ldots,n \).
  \item A vector field \( f \) on \( \R^{n}\times(\R^{n})^{*} \) is a family of \( 2n \) real-valued functions \( f^{i}=f^{i}(t,x,p) \), \( f_{i}=f_{i}(t,x,p) \), \( i=1,\ldots,n \).
  \item \( \div_x f = \sum_{i=1}^{n} \partial_{x^{i}}f^{i} \) divergence of the vector field \( f=f(t,x) \) in \( x \).
  \item \( \div_{(x,p)} f=\sum_{i=1}^{n}\left(\partial_{x^{i}}f^{i}+\partial_{p_{i}}f_{i}\right) \) divergence of the vector field \( f=f(t,x,p) \) in \( (x,p) \).
  \item $D_xf =
        \begin{pmatrix}
          \partial_{x^{1}}f^{1}&\cdots&\partial_{x^{n}}f^{1}\\
          \vdots&\ddots&\vdots\\
          \partial_{x^{1}}f^{n}&\cdots&\partial_{x^{n}}f^{n}
        \end{pmatrix}$ derivative of the vector field \( f=f(t,x) \) in \( x \).
  \item $\nabla_{x}\psi =
        \begin{pmatrix}
          \partial_{x^{1}}\psi&\cdots&\partial_{x^{n}}\psi
        \end{pmatrix}$ gradient of a real-valued function \( \psi=\psi(t,x,p) \) in \( x \).
   \item $\nabla_{p}\psi =
        \begin{pmatrix}
          \partial_{p_{1}}\psi\\
          \vdots\\
          \partial_{p_{n}}\psi
        \end{pmatrix}$ gradient of a real-valued function \( \psi=\psi(t,x,p) \) in \( p \). 
\end{itemize}

Below, we will also deal with vector measures whose values belong to \( \M^{1,n} \), i.e., $\nu =
\begin{pmatrix}
  \nu_{1}&\cdots&\nu_{n}
\end{pmatrix}$, where \( \nu_{1},\ldots,\nu_{n} \) are Radon measures on \( \R^{n} \).
Given \( \phi\colon \R^{n}\to \R^{n} \), we set
$
 \displaystyle \langle\nu,\phi\rangle = \int \phi\cdot\d\nu \doteq \sum_{i=1}^{n} \int \phi^{i}\d\nu_{i}$.

Let \( X \) be a Polish space.
From measures on \( X \) one can construct several important topological spaces:
\(
\mathcal M(X)\supset \mathcal P(X) \supset \mathcal P_{2}(X) \supset \mathcal P_{c}(X).
\)
Here \( \mathcal{M}(X) \) consists of all signed Radon measures, \( \mathcal P(X) \) of all probability measures, \( \mathcal P_{2}(X) \) of all probability measures with finite second moments, \( \mathcal P_{c}(X) \) of all compactly supported probability measures.
Below, the Wasserstein distance~\cite{AGS} on \( \mathcal P_{2}(X) \) is always denoted by \( W_{2} \).

Given a Radon measure \( \mu \) on \( \mathbb{R}^n \), denote by \( L_{\mu}^{p}(\mathbb{R}^{n};\mathbb{R}^{m}) \) the space of all \( \mu \)-measurable maps (equivalence classes) \( f\colon\R^{n}\to\R^{m} \) such that {\( \|f\|_{L^p_\mu}\doteq\left(\int\vert f \vert^{p}\d \mu\right)^{1/p}<\infty \)}.
If \( \mu \) is the \( n \)-dimensional Lebesgue measure \( \mathcal L^{n} \), we simply write \( L^p(\mathbb{R}^{n};\mathbb{R}^{m}) \).

\subsection{The space $\mathcal{P}_{c}(\mathbb R^{n})$ and functions of probability measures}
\label{subsec:Pc}

The role of the main arena of our paper will be played by the space \( \mathcal{P}_{c}(\R^{n}) \) endowed with the so-called \emph{final topology}.
\begin{definition}
  \label{def:final}
  Let $(\mathcal X_n, \tau_n)$ be a sequence of topological spaces such that $\mathcal X_n\subset \mathcal X_{n+1}$ with continuous inclusion on every $n$.
  Let $\mathcal X = \cup_n\mathcal X_n$.
  The \emph{final topology} is the strongest topology $\tau$ on $\mathcal X$ which lets the inclusions $\mathrm{id}_n\colon \mathcal X_n\to \mathcal X$ be continuous for every $n$.
\end{definition}

In our case, $(\mathcal X_n,\tau_n) \doteq (\mathcal P(\bm B_n),W_2)$ and $\mathcal X \doteq \mathcal P_c(\mathbb R^n)$.
The final topology \( \tau \) on \( \mathcal{P}_{c}(\R^{n}) \) enjoys the following properties~\cite{GigliThesis}:
\begin{itemize}
  \item $\mu_n \xrightarrow{\tau} \mu$ if and only if $\mu_n\xrightarrow{W_{2}} \mu$ in $\mathcal{P}(\bm B_N)$ for some $N$,
  \item if $\mathcal K \subset \mathcal P_c(\mathbb R^n)$ is compact, then $\mathcal K\subset \mathcal{P}(\bm B_N)$ for some $N$,
  \item $\tau$ is a Hausdorff topology but it is not induced by any distance.
\end{itemize}

Below, we will constantly deal with mappings \( \Phi\colon I\times\R^{n}\times\P_{c}(\R^{n})\to \R^{m} \) of a particular regularity. Recall the respective 
\begin{definition}\label{def:regularity}
Let \( \Phi\) be a map \(I\times\R^{n}\times\P_{c}(\R^{n})\to \R^{m} \). We say that
\begin{enumerate}
  \item \( \Phi \) is a \emph{Carath\'eodory map} if and only if \( t\mapsto \Phi(t,x,\mu) \) is measurable for each \( (x,\mu) \), and \( (x,\mu)\mapsto \Phi(t,x,\mu) \) is sequentially continuous for each \( t \).
  \item \( \Phi \) is \emph{locally bounded} if its restriction on any compact subset of \( I\times\R^{n}\times\mathcal{P}_{c}(\R^{n}) \) is bounded.
  \item \( \Phi \) is \emph{locally Lipschitz} if and only if, for each \( t \), the restriction of \( (x,\mu)\mapsto\Phi(t,x,\mu) \) to any compact set \( \mathcal{K} \subset \R^{n}\times \mathcal{P}_{c}(\R^{n}) \) is Lipschitz with some constant \( L_{\mathcal{K}} \), independent of \( t \).
  \item \( \Phi \) is \emph{sublinear} if and only if there exists \( C>0 \) such that \( \left\lvert\Phi(t,x,\mu)\right\rvert\leq C \left( 1+\vert x\vert\right) \) for all \( t\), \( x \), \( \mu \).
\end{enumerate}
\end{definition}
Thanks to the outlined properties of the final topology, the definitions of the local boundedness and local Lipschitzianity can be given in the following equivalent way:
\begin{enumerate}
  \item[2\( ' \).]  \( \Phi \) is \emph{locally bounded} if and only if, for any compact \( \Omega\subset \R^{n} \), there exists \( C_{\Omega}>0 \) such that \( \left\lvert\Phi(t,x,\mu)\right\rvert\leq C_{\Omega} \) for all \( t\in I \), \( x\in \Omega \), \( \mu\in \P(\Omega) \);
  \item[3\( ' \).] \( \Phi \) is \emph{locally Lipschitz} if and only if, for any compact \( \Omega\subset \R^{n} \), there exists \( L_{\Omega}>0 \) such that \( \left\lvert\Phi(t,x,\mu) - \Phi(t,x',\mu')\right\rvert\leq L_{\Omega} \left( \vert x-x'\vert  + W_{2}(\mu,\mu') \right) \) for all \( t\in I \), \( x,x'\in \Omega \), \( \mu,\mu'\in \P(\Omega) \).
\end{enumerate}

\subsection{Derivatives in the space of probability measures}
\label{subsec:deriv}

There are several concepts of derivative of a function $\mathcal P \to \R$.  In this paper, we shall employ the notion of ``intrinsic derivative''~\cite{CardMaster2019}.

\begin{definition}[$\C^{1}$ maps]
  \label{def:flat}
  A function \( F \colon \P_{c}(\R^{n})\to \R \) is said to be \emph{of class \( \C^{1} \)} if and only if there exists a sequentially continuous, locally bounded map \( \frac{\delta F}{\delta\mu}\colon {\P}_{c}(\R^{n})\times\R^{n}\to\R \) such that
  \begin{gather*}
    F(\mu') - F(\mu) = \int_{0}^{1}\int\frac{\delta F}{\delta\mu}\left((1-t)\mu+t\mu',y\right)\d(\mu'-\mu)(y)\d t\qquad \forall \mu,\mu'\in \P_{c}(\R^{n}).
  \end{gather*}
  Since \( \frac{\delta F}{\delta\mu} \) is defined up to an additive constant, we adopt the normalization convention
  \begin{displaymath}
    \int \frac{\delta F}{\delta\mu}(\mu,y)\d\mu(y) = 0\qquad \forall \mu\in \P_{c}(\R^{n}).
  \end{displaymath}
\end{definition}

\begin{definition}
  \label{def:intrinsic}
  Let \( \frac{\delta F}{\delta\mu} \) be \( \C^{1} \) in \( y \).
  Then the \emph{intrinsic derivative} \( D_{\mu}F\colon \mathcal{P}_{c}(\R^{n})\times\R^{n}\to\R^{n} \) is defined by \( D_{\mu}F \doteq D_{y}\frac{\delta F}{\delta\mu} \).
\end{definition}

Some important properties of the intrinsic derivative are gathered in the following proposition, {which combines the statements of Propositions 2.2-2.4 from~\cite{cardaliaguetAnalysisSpaceMeasures2019}}.
\begin{proposition}
  \label{prop:flat}
  Let \( F\colon\P_{c}(\R^{n})\to \R \) be \( \C^{1} \), \( \frac{\delta F}{\delta\mu} \) be \( \C^{1} \) in \( y \), and \( D_{\mu}F \) be sequentially continuous and locally bounded.
  Then, the following holds:
  \begin{enumerate}
    \item For any Borel measurable, locally bounded map \( \phi\colon \R^{n}\to \R^{n} \), the function \( s\mapsto F\left((\id+s\phi)_{\sharp}\mu\right) \) is differentiable at zero, and
    \begin{equation}
      \frac{d}{ds}\Big\vert_{s=0}F\left((\id+s\phi)_{\sharp}\mu\right) = \int D_{\mu}F(\mu,y)\cdot\phi(y)\d\mu(y).\label{intrinsic}
    \end{equation}
    \item Given a compact set \( \Omega\subset\mathbb R^n \), the restriction of \( F \) to \( \P(\Omega) \) satisfies
    \begin{multline*}
      \left\lvert F(\mu') - F(\mu) - \iint D_{\mu}F(\mu,y)\cdot (y-x)\d\Pi(x,y) \right\rvert
      \leq
      o \left( \left( \iint \vert x-y\vert ^{2}\d\Pi(x,y) \right)^{1/2} \right),
    \end{multline*}
    for any \( \mu,\mu'\in \P(\Omega) \) and any transport plan \( \Pi \) between \( \mu \) and \( \mu' \).
    \item The quantity \( \frac{\delta F}{\delta \mu} \) can be calculated as follows:
     \begin{displaymath}
       \frac{\delta F}{\delta\mu}(\mu,y) = \lim_{h\to 0+}\frac{1}{h}\left(F\left((1-h)\mu+h\delta_{y}\right)-F(\mu)\right).
     \end{displaymath}
  \end{enumerate}
\end{proposition}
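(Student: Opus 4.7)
The plan is to treat items (3), (1), (2) in this order; each follows from the defining integral identity of Definition~\ref{def:flat}, the pushforward rule $\int g \, d(T_{\sharp}\nu) = \int g\circ T \, d\nu$, and dominated convergence, with sequential continuity plus local boundedness of $\frac{\delta F}{\delta\mu}$ and $D_{\mu}F$ on sets of the form $\mathcal{P}(\bm B_{N}) \times \bm B_{N}$ providing the regularity needed to pass to limits.

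For item (3), I would set $\mu' := (1-h)\mu + h\delta_{y}$; then $\mu'-\mu = h(\delta_{y}-\mu)$ and the linear interpolant reads $\mu_{t,h} := (1-th)\mu + th\delta_{y}$. Division of the defining identity by $h$ yields
\[
\frac{F(\mu')-F(\mu)}{h} = \int_{0}^{1}\Big[\frac{\delta F}{\delta\mu}(\mu_{t,h},y) - \int \frac{\delta F}{\delta\mu}(\mu_{t,h},z)\, d\mu(z)\Big] dt.
\]
Since $\mu_{t,h}\to\mu$ in $W_{2}$, sequential continuity and local boundedness deliver the limit by dominated convergence, and the normalization convention $\int \frac{\delta F}{\delta\mu}(\mu,z)\,d\mu(z)=0$ annihilates the second integrand, leaving $\frac{\delta F}{\delta\mu}(\mu,y)$.

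For item (1), I would take $\mu' := (\id + s\phi)_{\sharp}\mu$; local boundedness of $\phi$ confines every interpolant $\mu_{t,s} := (1-t)\mu + t\mu'$ to a common $\mathcal{P}(\Omega')$ for $|s|\le 1$. The pushforward rule reduces the defining integral to
\[
\int_{0}^{1}\int\Big[\frac{\delta F}{\delta\mu}(\mu_{t,s},x+s\phi(x)) - \frac{\delta F}{\delta\mu}(\mu_{t,s},x)\Big] d\mu(x)\, dt,
\]
and $\C^{1}$-regularity in $y$ (with gradient $D_{\mu}F$) rewrites the bracket as $s\int_{0}^{1}D_{\mu}F(\mu_{t,s},x+rs\phi(x))\cdot\phi(x)\, dr$. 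Dividing by $s$ and letting $s\to 0$, dominated convergence (all measures lying in the fixed compact $\mathcal{P}(\Omega')$, on which $D_{\mu}F$ is bounded and sequentially continuous) produces \eqref{intrinsic}.

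For item (2), I would interpolate at the plan level: set $\mu_{s} := (p_{s})_{\sharp}\Pi$ with $p_{s}(x,y) := (1-s)x+sy$, so that $\mu_{0}=\mu$ and $\mu_{1}=\mu'$. Repeating the argument of item (1) against $\Pi$ and using the shift $p_{s+h}-p_{s}=h(y-x)$ gives
\[
\frac{d}{ds}F(\mu_{s}) = \int D_{\mu}F(\mu_{s},(1-s)x+sy)\cdot(y-x)\, d\Pi(x,y).
\]
Integrating in $s$ and subtracting the claimed principal part produces a residual which, by Cauchy--Schwarz, is bounded by $\bigl(\int_{0}^{1}\|\Delta_{s}\|_{L^{2}(\Pi)}^{2}\, ds\bigr)^{1/2}\cdot\bigl(\int|y-x|^{2}\, d\Pi\bigr)^{1/2}$, where $\Delta_{s}(x,y) := D_{\mu}F(\mu_{s},(1-s)x+sy) - D_{\mu}F(\mu,y)$. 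The main obstacle is to show the first factor is $o(1)$ as $\int|y-x|^{2}\, d\Pi\to 0$: I would exploit that $D_{\mu}F$, being sequentially continuous on the compact metric space $\mathcal{P}(\bm B_{N})\times\bm B_{N}$, is uniformly continuous with some modulus $\omega$; combined with $W_{2}(\mu_{s},\mu)\le s\bigl(\int|y-x|^{2}\, d\Pi\bigr)^{1/2}$ and the pointwise bound $|\Delta_{s}|\le\omega\bigl(W_{2}(\mu_{s},\mu)+(1-s)|y-x|\bigr)$, a truncation of $\omega$ near zero together with Markov's inequality yields $\|\Delta_{s}\|_{L^{2}(\Pi)}\to 0$ uniformly in $s$, completing the estimate.
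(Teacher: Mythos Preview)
The paper does not prove this proposition at all: it is quoted as a combination of Propositions~2.2--2.4 from~\cite{cardaliaguetAnalysisSpaceMeasures2019}, so there is no in-paper argument to compare against. Your proposal supplies a self-contained proof, and the strategy is essentially the standard one used in that reference.

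Your arguments for items~(3) and~(1) are correct as written; the only comment is that in~(1) the passage $\mu_{t,s}\to\mu$ uses $W_{2}\bigl((\id+s\phi)_{\sharp}\mu,\mu\bigr)\le |s|\,\|\phi\|_{L^{2}_{\mu}}$ together with the convexity estimate~\eqref{eq:convex}, both available in the paper.

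For item~(2) the plan is also sound, but two small points deserve a line of justification. First, ``integrating in~$s$'' requires that $s\mapsto F(\mu_{s})$ satisfy the fundamental theorem of calculus; you have shown the derivative exists everywhere, but you should also note that this map is Lipschitz on $[0,1]$ (immediate from the defining identity and the local bound on $D_{\mu}F$, exactly as in Lemma~\ref{lem:C1_lip}), hence absolutely continuous. Second, your key step---sequential continuity of $D_{\mu}F$ on the compact metric space $\mathcal{P}(\bm B_{N})\times\bm B_{N}$ implies uniform continuity---is correct, and the Markov/Chebyshev splitting you sketch does give $\sup_{s}\|\Delta_{s}\|_{L^{2}(\Pi)}\to 0$ as $\bigl(\int|x-y|^{2}\,d\Pi\bigr)^{1/2}\to 0$, uniformly over $\mu,\mu'\in\mathcal{P}(\Omega)$ and over all plans $\Pi$. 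That uniformity is exactly what the $o(\cdot)$ in the statement demands, so you should say so explicitly.
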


The first property links the intrinsic derivative with a ``directional'' derivative, where \( \phi \) plays the role of direction. The second one relates the notion of intrinsic derivative with the so-called \emph{localized Wasserstein derivative}~\cite{BonnetFrankowska2021a}:
\begin{definition}[localized Wasserstein derivative]
  \label{def:W}
  We say that \( F\colon \P_{2}(\R^{n})\to \R \) is \emph{locally differentiable} at \( \mu\in \P_{2}(\R^{n}) \) if there exists a tangent vector \( \xi\in \Tan_{\mu}{\P}_{2}(\R^{n}) \) such that, for any compact set \( \Omega \supset \spt \mu\), the restriction of \( F \) to \( \P(\Omega) \) satisfies
  \begin{displaymath}
    F(\mu') - F(\mu) = \iint \langle\xi(x),y-x\rangle\d\Pi(x,y) + o\left(\left(\iint \vert x-y\vert ^{2}\d\Pi(x,y)\right)^{1/2}\right),
  \end{displaymath}
  for any \( \mu'\in \mathcal P(\Omega) \) and any transport plan \( \Pi \) between \( \mu \) and \( \mu' \). Such \( \xi \) is uniquely defined and called the \emph{localized Wasserstein derivative} of \( F \) at \( \mu \). 
\end{definition}
Recall that the tangent space $\Tan_{\mu}{\P}_{2}(\R^{n})$ to ${\P}_{2}(\R^{n})$ at \( \mu\in \P_{2}(\R^{n}) \) is introduced as
\begin{displaymath}
  \Tan_{\mu}{\P}_{2}(\R^{n}) = \overline{\left\{\nabla\phi\;\text{s.t.}\;\phi\in \C^{\infty}_{c}(\R^{n})\right\}}^{L^{2}_{\mu}} \subset L^{2}_{\mu} = L^{2}_{\mu}(\R^{n};\R^{n}).
\end{displaymath}

Proposition~\ref{prop:flat} says that any \( \C^{1} \) functional on the space of probability measures with sequentially continuous and locally bounded intrinsic derivative \( D_{\mu}F \) is locally differentiable at any \( \mu\in \P_{c}(\R^{n}) \), and the projection of \( D_{\mu}F(\mu,\cdot) \) onto \( \Tan_{\mu}{\P}_{2}(\R^{n}) \) coincides with the corresponding localized Wasserstein derivative.

The third assertion of Proposition~\ref{prop:flat} offers a convenient tool for practical calculation of the intrinsic derivative. We illustrate this machinery with the use of the following  paradigmatic example. 
\begin{example}
  \label{ex:conv}
  Let \( K\colon\R^{n}\to\R^{n} \) be a \( \C^{1} \) map.
  Fixed \( x\in \R^{n} \), let us compute the intrinsic derivative of the functional
 $\displaystyle \mu \mapsto F(x,\mu) \doteq (K*\mu)(x) \doteq  \int K(x-y)\d\mu(y).$
By observing that
\begin{displaymath}
  F\left(x,(1-t)\mu+t\delta_{y}\right) = (1-t)\int K(x-y)\d\mu(y) + t K(x-y),
\end{displaymath}
the flat derivative is easily found as 
\begin{displaymath}
  \frac{\delta F}{\delta\mu}(x,\mu,y) = K(x-y) -\int K(x-z)\d\mu(z),
\end{displaymath}
which gives:
 \(
 D_{\mu}F(x,\mu,y) = D_{y}\frac{\delta F}{\delta\mu}(x,\mu,y) = -DK(x-y).
 \)

\end{example}

Recall another useful fact:
\begin{lemma}
  \label{lem:C1_lip}
  Let \( F \) be the same as in Proposition~\ref{prop:flat}.
  Then \( F \) is locally Lipschitz.
\end{lemma}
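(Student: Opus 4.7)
The plan is to interpret "locally Lipschitz" in the natural sense suggested by item 3' of the equivalent characterization following Definition~\ref{def:regularity}: for every compact \( \Omega\subset\R^{n} \), there exists \( L_{\Omega}>0 \) with \( |F(\mu)-F(\mu')|\leq L_{\Omega}\,W_{2}(\mu,\mu') \) for all \( \mu,\mu'\in\P(\Omega) \). Fix such \( \Omega \) and pick \( N \) with \( \Omega\subset\bm B_{N} \). Local boundedness of \( D_{\mu}F \), together with the description of compact subsets of \( \P_{c}(\R^{n}) \) via the final topology, furnishes a constant \( C_{N} \) such that \( |D_{\mu}F(\nu,z)|\leq C_{N} \) for all \( \nu\in\P(\bm B_{N}) \) and \( z\in\bm B_{N} \).

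Next, I would exploit the defining identity of Definition~\ref{def:flat} with \( \mu_{t}\doteq (1-t)\mu+t\mu'\in\P(\Omega) \):
\begin{equation*}
  F(\mu')-F(\mu)=\int_{0}^{1}\int\frac{\delta F}{\delta\mu}(\mu_{t},y)\,d(\mu'-\mu)(y)\,dt,
\end{equation*}
and rewrite the inner integral through any transport plan \( \Pi \) between \( \mu \) and \( \mu' \): using \( (\pi_{1})_{\sharp}\Pi=\mu \) and \( (\pi_{2})_{\sharp}\Pi=\mu' \), it takes the form \( \iint\bigl[\frac{\delta F}{\delta\mu}(\mu_{t},y)-\frac{\delta F}{\delta\mu}(\mu_{t},x)\bigr]\,d\Pi(x,y) \).

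The key third step invokes the \( \C^{1} \) regularity of \( \frac{\delta F}{\delta\mu} \) in \( y \). Since \( \bm B_{N} \) is convex, the segment \( [x,y] \) stays in \( \bm B_{N} \), so the fundamental theorem of calculus yields
\begin{equation*}
  \frac{\delta F}{\delta\mu}(\mu_{t},y)-\frac{\delta F}{\delta\mu}(\mu_{t},x)=\int_{0}^{1}D_{\mu}F\bigl(\mu_{t},x+s(y-x)\bigr)\cdot(y-x)\,ds,
\end{equation*}
with the bound \( |D_{\mu}F|\leq C_{N} \) holding along this segment. Combining the three displays, applying the Cauchy--Schwarz inequality against the probability measure \( \Pi \) (so that \( \iint|y-x|\,d\Pi\leq(\iint|y-x|^{2}\,d\Pi)^{1/2} \)), and finally choosing \( \Pi \) to be a \( W_{2} \)-optimal transport plan, I obtain \( |F(\mu')-F(\mu)|\leq C_{N}\,W_{2}(\mu,\mu') \); hence \( L_{\Omega}=C_{N} \) suffices.

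The only delicate point is the bookkeeping that keeps \( \mu_{t} \) and the auxiliary points \( x+s(y-x) \) inside a fixed compact ball on which \( D_{\mu}F \) is bounded; convexity of \( \bm B_{N} \) and of \( \P(\bm B_{N}) \) as a subset of \( \P_{c}(\R^{n}) \) makes this automatic, and no other substantive obstacle is expected.
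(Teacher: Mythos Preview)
Your proof is correct and follows essentially the same route as the paper: both use the defining identity for \( \frac{\delta F}{\delta\mu} \) along the segment \( \mu_{t}=(1-t)\mu+t\mu' \), rewrite the increment via a transport plan as \( \iint[\frac{\delta F}{\delta\mu}(\mu_{t},y)-\frac{\delta F}{\delta\mu}(\mu_{t},x)]\,d\Pi \), apply the mean value theorem in the \( y \)-variable along \( [x,y] \), and conclude from the local boundedness of \( D_{\mu}F \). Your version is slightly more explicit about the bookkeeping (the ball \( \bm B_{N} \), convexity, Cauchy--Schwarz), but the argument is the same.
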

\begin{proof}
  Fix a compact set \( \Omega \) and two measures \( \mu,\mu'\in \P(\Omega) \).
  Denote by \( \Pi \) an optimal plan between \( \mu \) and \( \mu' \) and let \( \mu_{t}=(1-t)\mu+t\mu' \).
  Then, we have
  \begin{displaymath}
    F(\mu') - F(\mu) = \int_{0}^{1} \iint\left[\frac{\delta F}{\delta\mu}\left(\mu_{t},y\right) - \frac{\delta F}{\delta\mu}\left(\mu_{t},x\right)\right]\d\Pi(x,y)\d t.
  \end{displaymath}
  The difference in the squared brackets is
  \begin{displaymath}
    \int_{0}^{1}D_{y}\frac{\delta F}{\delta\mu}\left(\mu_{t},(1-s)y + sx\right)(y-x)\d s.
  \end{displaymath}
  Hence the statement follows from the local boundedness of \( D_{\mu}F = D_{y}\frac{\delta F}{\delta\mu} \).
\end{proof}

\begin{definition}
  We say that \( F\colon \P_{c}(\R^{n})\to\R \) is \emph{of class \( \C^{1,1} \)} if \( F \) is \( \C^{1} \), \( \frac{\delta F}{\delta\mu} \) is \( \C^{1} \) in \( y \), and the intrinsic derivative \( D_{\mu}F \) is locally Lipschitz and locally bounded.
\end{definition}

\subsection{Nonlocal vector fields and their flows}
\label{subsec:vfields}

A time-dependent nonlocal vector field is a map \(V\colon I\times\R^{n}\times\P_{c}(\R^{n})\to \R^{n} \). If the dependence on $\mu \in \P_{c}(\R^{n})$ is fictitious  we say that $V$ is a local vector field (or simply ``vector field''). The basic regularity of nonlocal vector fields is understood in the sense of Definition~\ref{def:regularity}. 

It is well-known that the local transport PDEs can be studied using their characteristic flows. Recall the following 

\begin{definition}
  \label{def:vf}
  We say that a vector field \( v\colon I\times \R^{n}\to\R^{n} \) is \emph{of class \( \C^{1,1} \)} if
  \begin{enumerate}
    \item \( v \) is a locally bounded Carath\'eodory map;
    \item \( v \) is \( \C^{1} \) in \( x \) for each \( t \);
    \item \( D_{x}v\colon I\times \R^{n}\to \M^{n,n} \) is Carath\'eodory, locally bounded and locally Lipschitz.
  \end{enumerate}
\end{definition}

Any \emph{sublinear \( \mathcal C^{1,1} \) vector field} \( v \) generates a unique continuous map \( P\colon I\times I\times\R^{n}\to\R^{n} \) named the \emph{flow} of \( v_{t} \); this map is defined such that, for each \( t_{0}\in I \) and \( x\in \R^{n} \), $t \mapsto P_{t_{0},t}(x)$ is as a solution of the Cauchy problem
\begin{displaymath}
  \partial_t P_{t_{0},t}(x) = v_{t}\left(P_{t_{0},t}(x)\right),\quad P_{t_{0},t_{0}}(x) = x.
\end{displaymath}
For any \( t_{0},t\in I \) the map \( P_{t_{0},t}\colon\R^{n}\to\R^{n} \) is a \( \C^{2} \) diffeomorphism.
Moreover, it satisfies the semigroup property:
  $P_{t_{1},t_{2}}\circ P_{t_{0},t_{1}} = P_{t_{0},t_{2}}$ for all $t_{0},t_{1},t_{2}\in I.$

 In fact, the concept of flow can be extended to the case of nonlocal vector fields. To this end, we modify Definition~\ref{def:vf} as follows:
\begin{definition}
  \label{def:nvf}
  We say that a \emph{nonlocal} \emph{vector field} \( V\colon I\times \R^{n}\times \P_{2}(\R^{n})\to\R^{n} \) is \emph{of class \( \C^{1,1} \)} if
  \begin{enumerate}[1)]
    \item \( V \) is a locally bounded Carath\'eodory map;
    \item \( V \) is \( \C^{1} \) in \( x \) for each \( t \) and \( \mu \), and \( \C^{1} \) in \( \mu \) 
          for each \( t \) and \( x \);
    \item both \( D_{x}V\colon I\times\R^{n}\times\mathcal{P}_{c}(\R^{n})\to \M^{n,n} \) and \( D_{\mu}V\colon I\times \R^{n}\times \mathcal{P}_{c}(\R^{n})\times\R^{n}\to \M^{n,n} \) are Carath\'eodory, locally bounded and locally Lipschitz.
  \end{enumerate}
\end{definition}
Now, observe that any \emph{sublinear \( \C^{1,1} \) nonlocal vector field} \( V \) generates a unique sequentially continuous function \( X\colon I \times I\times\R^{n}\times \P_{c}(\R^{n})\to\R^{n} \) such that, for each \( x\in\R^{n} \) and \( \vartheta\in\P_{2}(\R^{n}) \), $t \mapsto X^{\vartheta}_{t_0,t}(x)$ is a solution of the ODE
\begin{displaymath}
  \partial_t X^{\vartheta}_{t_0,t}(x) = V_{t}\left(X^{\vartheta}_{t_0,t}(x),X^{\vartheta}_{t_0,t\sharp}\vartheta\right),\quad X^{\vartheta}_{t_0,t_0}(x) = x.
\end{displaymath}

We abbreviate $X^\vartheta_t = X^\vartheta_{0,t}$ and stress that \( \mu_{t}=(X^{\vartheta}_{t})_\sharp\vartheta \) is the unique solution of the nonlocal continuity equation
\begin{displaymath}
  \partial_{t}\mu_{t} + \div_x\left(V_{t}(\cdot,\mu_{t})\mu_{t}\right) = 0,\quad \mu_{0} = \vartheta.
\end{displaymath}
We call the map \( X\) the \emph{flow of the nonlocal vector field} \( V \). 

Notice that, for a given \( \vartheta \), we can define \( v_{t}(x)\doteq V_{t}(x,X^{\vartheta}_{t\sharp}\vartheta) \) and denote by \( P \) the flow of \( v \).
It is clear that \( X^{\vartheta}_{0,t} = P_{0,t} \).
We will use this fact below several times.

{The outlined facts (existence of the flow, well-posedness of the nonlocal continuity equation, and the representation formula for its solution) are well-known, refer, e.g., to 
~\cite{BonnetRossi2019, Fornasier2014, PiccoliRossi2013}.
}

\subsection{$\Oloc(\lambda^{2})$ families of vector fields}

In this section, we discuss some differential properties of nonlocal vector fields and their flows.
\begin{definition}
  \label{def:littleo2}
  Let \( \Phi^{\lambda}\colon \mathcal{X} \mapsto \R^{m} \), \( \lambda\in [0,1] \), be a family of functions on a topological space \( \mathcal{X} \).
  We say that \( \Phi^{\lambda} \) is \emph{\( \Oloc(\lambda^{2}) \) family} and write
  \begin{displaymath}
   \Phi^{\lambda} = \Oloc(\lambda^{2})\quad\text{or}\quad
   \Phi^{\lambda}(x) = \Oloc(x;\lambda^{2})
 \end{displaymath}
 if for any compact set \(\mathcal{K} \subset \mathcal{X} \) there exists \( C_{\mathcal{K}}>0 \) such that \( \left\lvert\Phi^{\lambda}(x)\right\rvert\leq C_{\mathcal{K}}\lambda^{2} \) for all \( \lambda\in [0,1] \) and \( x\in \mathcal{K} \).
\end{definition}

In particular, a family of nonlocal vector fields \( V^{\lambda}\colon I\times \R^{n}\times \mathcal{P}_{c}(\R^{n})\to\R^{n}\) is \( \Oloc(\lambda^{2}) \) if, for any compact \( \Omega\subset \R^{n} \), there exists \( C_{\Omega}>0 \) such that \( \left\vert V^{\lambda}_{t}(x,\mu)\right\vert\leq C_{\Omega}\lambda^{2} \), for all \( \lambda\in [0,1] \), \( t\in I \), \( x\in \Omega \), \( \mu\in \P(\Omega) \).

\begin{lemma}
  \label{lem:O2}
  Let \( V \) be a nonlocal vector field of class \( \C^{1,1} \).
  Then, for any locally bounded \( \phi,\psi\colon\R^{n}\to \R \), one has
  \begin{multline*}
    V_{t} \left( x+\lambda\phi(x), (\id+\lambda\psi)_{\sharp}\mu  \right) - V_{t}(x,\mu) \\ - \lambda D_{x}V_{t}(x,\mu)\phi(x) - \lambda\int D_{\mu}V_{t}(x,\mu,y)\psi(y)\d\mu(y) = \Oloc(t,x,\mu;\lambda^{2}),\quad {\lambda\in[0,1]}.
  \end{multline*}
  Moreover, the constant \( C_{\Omega} \) that guaranties the estimate
  \begin{displaymath}
    \Oloc(t,x,\mu;\lambda^{2})\leq C_{\Omega}\lambda^{2}\qquad \forall (t,x,\mu)\in I\times\Omega\times\P(\Omega)
  \end{displaymath}
  depends only on the data
  \begin{equation}
    \label{eq:r}
  r \doteq \max\left\{\vert x\vert +\max\{\vert\phi(x)\vert,\vert\psi(x)\vert\}\;\colon\;x\in \Omega\right\}
\end{equation}
\begin{displaymath}
 L_{r} =\max_{t \in I}\left\{\max_{\bm B_{r}\times\P(\bm B_{r})}\lip(D_{x}V_{t}), \max_{\bm B_{r}\times\P(\bm B_{r})\times \bm B_{r}}\lip(D_{\mu}V_{t})\right\}.
\end{displaymath}
\end{lemma}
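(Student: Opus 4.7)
The plan is to decompose the increment via the intermediate point $(x,\mu_\lambda)$, where $\mu_\lambda \doteq (\id+\lambda\psi)_\sharp\mu$:
\[
V_t\bigl(x+\lambda\phi(x),\mu_\lambda\bigr) - V_t(x,\mu) = A + B,
\]
with $A \doteq V_t(x+\lambda\phi(x),\mu_\lambda) - V_t(x,\mu_\lambda)$ and $B \doteq V_t(x,\mu_\lambda) - V_t(x,\mu)$, and to treat each bracket by a fundamental-theorem-of-calculus argument exploiting the Lipschitz continuity of $D_xV_t$ and $D_\mu V_t$ respectively. The first observation to record is that, by the very definition of $r$, the points $x+\sigma\phi(x)$ and $(\id+\sigma\psi)(y)$ lie in $\bm B_r$ for all $x,y\in\Omega$ and $\sigma\in[0,1]$; consequently both $\mu_\lambda$ and the convex interpolation $\mu_s \doteq (1-s)\mu + s\mu_\lambda$ belong to $\P(\bm B_r)$, and the single uniform Lipschitz constant $L_r$ is available at every intermediate configuration appearing below.

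For $A$, applying the fundamental theorem of calculus to $V_t(\cdot,\mu_\lambda)\in\C^1$ gives $A = \lambda\int_0^1 D_xV_t(x+\sigma\lambda\phi(x),\mu_\lambda)\phi(x)\,d\sigma$. Subtracting $\lambda D_xV_t(x,\mu)\phi(x)$ and applying the Lipschitz bound for $D_xV_t$ together with $W_2(\mu_\lambda,\mu)\leq \lambda\|\psi\|_{L^2_\mu}\leq\lambda r$ yields $|A-\lambda D_xV_t(x,\mu)\phi(x)| \lesssim L_r r^2\lambda^2$. For $B$, I propose to mimic the proof of Lemma~\ref{lem:C1_lip}: use the flat derivative along the convex interpolation $\mu_s$ together with the coupling $\Pi=(\id,\id+\lambda\psi)_\sharp\mu$ to write
\[
B = \int_0^1 \int \Bigl[\tfrac{\delta V_t}{\delta\mu}(x,\mu_s,y+\lambda\psi(y)) - \tfrac{\delta V_t}{\delta\mu}(x,\mu_s,y)\Bigr]d\mu(y)\,ds,
\]
then expand the inner difference by the fundamental theorem of calculus in $y$ to produce the triple integral
\[
\lambda\int_0^1\!\int_0^1\!\int D_\mu V_t(x,\mu_s,y+\tau\lambda\psi(y))\,\psi(y)\,d\mu(y)\,d\tau\,ds.
\]
Subtracting the target first-order term $\lambda\int D_\mu V_t(x,\mu,y)\psi(y)\,d\mu(y)$ and invoking Lipschitz continuity of $D_\mu V_t$, the remainder integrand is bounded by $L_r\bigl(W_2(\mu_s,\mu)+\tau\lambda|\psi(y)|\bigr)|\psi(y)|\leq L_r r^2\lambda(\sqrt s+\tau)$, using the elementary estimate $W_2(\mu_s,\mu)\leq\sqrt{s}\,W_2(\mu_\lambda,\mu)$. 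Integration then delivers $|B - \lambda\int D_\mu V_t(x,\mu,y)\psi(y)\,d\mu(y)| \lesssim L_r r^2\lambda^2$.

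The principal obstacle I expect to navigate is that $\psi$ is only locally bounded, hence $\id+\lambda\psi$ need not be invertible, so the na\"ive chain-rule computation of $\frac{d}{ds}V_t(x,(\id+s\lambda\psi)_\sharp\mu)$ via pushforward composition is not available at interior values of $s$. Replacing that rigid formula with the flat-derivative route along the \emph{convex} interpolation $\mu_s$ bypasses the obstruction, while Lipschitz continuity of $D_\mu V_t$ — the defining feature of the $\C^{1,1}$ hypothesis — upgrades the merely little-$o$ statement of Proposition~\ref{prop:flat}(2) to the required $\Oloc(\lambda^2)$ bound. Summing the two estimates produces the constant $C_\Omega$ as an absolute multiple of $L_r r^2$, matching the announced dependence on the data.
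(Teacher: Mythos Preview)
Your proposal is correct and follows essentially the same route as the paper: the identical decomposition through the intermediate point $(x,\mu_\lambda)$, the mean-value-theorem treatment of the $x$-increment $A$, and the flat-derivative expansion of $B$ along the convex interpolation $\mu_s=(1-s)\mu+s\mu_\lambda$ followed by a fundamental-theorem-of-calculus step in $y$ to reach the triple integral involving $D_\mu V_t$. The paper also records the intermediate estimates \eqref{eq:aux1}, \eqref{eq:aux2} and proves the convexity bound $W_2(\mu,\mu_s)\le\sqrt{s}\,W_2(\mu,\mu_\lambda)$ explicitly via~\eqref{eq:convex}, but otherwise the arguments coincide.
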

\begin{proof}
  We split the proof into several steps.

  1. Fix a compact set \( \Omega \) and a triple \( (t,x,\mu)\in I\times\Omega\times\mathcal{P}(\Omega) \). Consider the identity:
  \begin{align*}
    V_{t} \left( x+\lambda\phi(x), (\id+\lambda\psi)_{\sharp}\mu  \right) &- V_{t}(x,\mu)\\
    &=V_{t} \left( x+\lambda\phi(x), (\id+\lambda\psi)_{\sharp}\mu  \right) - V_{t} \left( x, (\id+\lambda\psi)_{\sharp}\mu  \right)\\
    &+ V_{t} \left( x, (\id+\lambda\psi)_{\sharp}\mu  \right) - V_{t}(x,\mu).
  \end{align*}
  By the mean value theorem, the first difference in the right-hand side takes the form
  \begin{displaymath}
    \lambda\int_{0}^{1}D_{x}V_{t} \left( x + s\lambda\phi(x), (\id+ \lambda\psi)_{\sharp}\mu\right)\phi(x)\d s,
  \end{displaymath}
  and the second one yields
  \begin{align*}
    \int_{0}^{1}
    &\int\frac{\delta V_{t}}{\delta\mu} \left( \mu_{\lambda,\tau},y \right)\d \left( (\id+\lambda\psi)_{\sharp}\mu - \mu \right)(y)\d\tau\\
    &=\int_{0}^{1}\int\left[\frac{\delta V_{t}}{\delta\mu} \left( \mu_{\lambda,\tau},y+\lambda\psi(y) \right) - \frac{\delta V_{t}}{\delta\mu} \left( \mu_{\lambda,\tau},y \right)\right]\d \mu(y)\d\tau\\
    &=\lambda\int_{0}^{1}\int_{0}^{1}\int D_{\mu}V_{t} \left(x, \mu_{\lambda,\tau}, y+s\lambda\psi(y)\right)\psi(y)\d\mu(y)\d s\d\tau,
  \end{align*}
  where \( \mu_{\lambda,\tau} = (1-\tau)\mu+\tau(\id+\lambda\psi)_{\sharp}\mu \).

  2. Let \( r \) be as in~\eqref{eq:r}.
  Then \( x +\lambda\phi(x) \in \bm B_{r} \) and
  \( (\id+\lambda\psi)_{\sharp}\mu\in \P(\bm B_{r}) \) for all \( \lambda\in[0,1] \), \( x\in \Omega \), \( \mu\in \P(\Omega) \).
  Since \( D_{x}V \) and \( D_{\mu}V \) are locally Lipschitz,
  {
  \begin{align}
    &\left\lvert D_{x}V_{t} \left( x + s\lambda\phi(x), (\id+ \lambda\psi)_{\sharp}\mu\right)\phi(x) - D_{x}V_{t}(x,\mu)\phi(x)\right\rvert\notag\\
    &\hspace{160pt}\leq \lambda L_{r} \left(\left\lvert \phi(x) \right\rvert^{2}+ \|\psi\|_{L^{2}_{\mu}}\vert\phi(x)\vert\right)\label{eq:aux1}\\
    &\left\vert\int D_{\mu}V_{t} \left(x, \mu_{\lambda,\tau}, y+s\lambda\psi(y)\right)\psi(y)\d\mu(y)-\int D_{\mu}V_{t}(x,\mu,y)\psi(y)\d\mu(y) \right\vert\notag\\
    &\hspace{160pt}\leq L_{r} \left( W_{2}(\mu,\mu_{\lambda,\tau})\|\psi\|_{L^{1}_{\mu}} +\lambda \|\psi\|^{2}_{L^{2}_{\mu}}  \right).\label{eq:aux2}
  \end{align}
}
  3. Let us estimate \(W_{2} \left( \mu,\mu_{\lambda,\tau} \right) \).
  To this end, recall that
  \begin{equation}
    \label{eq:convex}
    W_{2}^{2}\left((1- \tau)\mu_{0}+\tau\mu_{1},\nu\right)\leq (1-\tau)W_{2}^{2}(\mu_{0},\nu) + \tau W_{2}^{2}(\mu_{1},\nu),
  \end{equation}
  for all \( \mu_{0},\mu_{1},\nu \in \P_{2}(\R^{n})\) and all \( \tau\in [0,1] \).
  This inequality becomes evident if we note that, for any \( \Pi_{0}\in \Gamma_{o}(\mu_{0},\nu) \) and \( \Pi_{1}\in \Gamma_{o}(\mu_{1},\nu) \), the convex combination \( (1-\tau)\Pi_{0}+\tau\Pi_{1} \) is a transport plan between \( (1- \tau)\mu_{0}+\tau\mu_{1}\) and \( \nu \).
  In our case,~\eqref{eq:convex} implies that
  \begin{displaymath}
    W_{2} \left( \mu,\mu_{\lambda,\tau} \right)\leq \sqrt{\tau}W_{2} \left( \mu, (\id+\lambda\psi)_{\sharp}\mu \right)\leq \sqrt{\tau}\lambda\|\psi\|_{L^{2}_{\mu}}.
  \end{displaymath}
  The statement now follows from~\eqref{eq:aux1}, \eqref{eq:aux2} and the inequalities \( \vert\phi\vert\leq r \), \( \vert\psi\vert\leq r \) on \( \Omega \).
\end{proof}

Arguments, similar to those of the previous proof, lead to the following slight modification of Lemma~\ref{lem:O2}.
\begin{lemma}
  \label{lem:O2-mod}
  Let \( V \) be a nonlocal vector field of class \( \C^{1,1} \) and \( X\colon I\times\R^{n}\times\mathcal{P}_{c}(\R^{n})\to \R^{n}\) be a sequentially continuous and locally bounded map such that \( x\mapsto X^{\mu}_{t}(x)\) is bijective for all \( t \) and \( \mu \).
  Then, for any locally bounded Carath\'eodory maps \( \phi,\psi\colon I\times\R^{n}\times\P_{c}(\R^{n})\to\R^{n} \), we have
  \begin{align*}
    V_{t} & \left( X_{t}^{\mu}(x) +\lambda\phi^{\mu}_{t}(x), (X_{t}^{\mu}+\lambda\psi^{\mu}_{t})_{\sharp}\mu  \right) - V_{t}\left(X_{t}^{\mu}(x),X_{t\sharp}^{\mu}\mu\right)\\
     &- \lambda D_{x}V_{t}\left(X_{t}^{\mu}(x),X_{t\sharp}^{\mu}\mu\right)\phi^{\mu}_{t}(x)
     - \lambda\int D_{\mu}V_{t}\left(X_{t}^{\mu}(x),X_{t\sharp}^{\mu}\mu,X_{t}^{\mu}(y)\right)\psi^{\mu}_{t}(y)\d\mu(y)\\
     & = \Oloc(t,x,\mu;\lambda^{2}).
  \end{align*}
  Moreover, the constant \( C_{\Omega} \) which guaranties the estimate
  \begin{displaymath}
    \Oloc(t,x,\mu;\lambda^{2})\leq C_{\Omega}\lambda^{2}\qquad \forall (t,x,\mu)\in I\times\Omega\times\P(\Omega)
  \end{displaymath}
  depends only on
  \begin{equation}
    \label{eq:r2}
  r \doteq \max\left\{\left\vert X_{t}^{\mu}(x)\right\vert+\max\{\vert\phi^{\mu}_{t}(x)\vert,\vert\psi^{\mu}_{t}(x)\vert\}\;\colon\;(t,x,\mu)\in I\times\Omega\times\P(\Omega)\right\}
\end{equation}
and \( L_{r} \) which bounds, for all \( t \), the Lipschitz constants of \( D_{x}V_{t} \) and \( D_{\mu}V_{t} \):
\begin{displaymath}
  \lip(D_{x}V_{t})\leq L_{r}\text{ on } \bm B_{r}\times\P(\bm B_{r}),\quad \lip(D_{\mu}V_{t})\leq L_{r} \text{ on } \bm B_{r}\times\P(\bm B_{r})\times \bm B_{r}.
\end{displaymath}
\end{lemma}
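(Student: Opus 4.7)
The plan is to reduce the statement directly to Lemma~\ref{lem:O2} via a change of variables. Fix $(t,x,\mu)$ and, using the bijectivity of $X_t^\mu$, set $Y_t^\mu \doteq (X_t^\mu)^{-1}$, $\tilde x \doteq X_t^\mu(x)$, $\tilde \mu \doteq X_{t\sharp}^\mu\mu$, $\tilde\phi(z) \doteq \phi_t^\mu(Y_t^\mu(z))$, $\tilde\psi(z) \doteq \psi_t^\mu(Y_t^\mu(z))$. A straightforward computation using the composition rule for pushforwards gives
\begin{displaymath}
  (X_t^\mu+\lambda\psi_t^\mu)_\sharp \mu = \bigl((\id+\lambda\tilde\psi)\circ X_t^\mu\bigr)_\sharp \mu = (\id+\lambda\tilde\psi)_\sharp \tilde\mu,
\end{displaymath}
together with $\tilde x + \lambda\tilde\phi(\tilde x) = X_t^\mu(x) + \lambda\phi_t^\mu(x)$. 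Thus the left-hand side of the claimed estimate is exactly what appears in Lemma~\ref{lem:O2} for the ``tilded'' data at the base point $(\tilde x,\tilde\mu)$.

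Next, I would invoke Lemma~\ref{lem:O2} on $(\tilde x, \tilde \mu)$ with the perturbations $\tilde\phi, \tilde\psi$ (which are locally bounded because $\phi,\psi$ are and $X_t^\mu$ is a bijection of $\R^n$). This yields the expansion with the main terms $\lambda D_x V_t(\tilde x,\tilde\mu)\tilde\phi(\tilde x)$ and $\lambda\int D_\mu V_t(\tilde x, \tilde\mu,y)\tilde\psi(y)\,d\tilde\mu(y)$. The first term coincides with $\lambda D_x V_t(X_t^\mu(x),X_{t\sharp}^\mu\mu)\phi_t^\mu(x)$ by definition. For the second, the pushforward change-of-variables formula with $y=X_t^\mu(z)$ gives
\begin{displaymath}
  \int D_\mu V_t(X_t^\mu(x),X_{t\sharp}^\mu\mu,y)\tilde\psi(y)\d\tilde\mu(y) = \int D_\mu V_t\bigl(X_t^\mu(x),X_{t\sharp}^\mu\mu,X_t^\mu(z)\bigr)\psi_t^\mu(z)\d\mu(z),
\end{displaymath}
which matches the asserted form.

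To control the constant $C_\Omega$, I would observe that for $(t,x,\mu)\in I\times\Omega\times\P(\Omega)$, the relevant evaluations of $V_t$ and its derivatives take place at points of $\bm B_r$ and measures in $\P(\bm B_r)$ with $r$ as in~\eqref{eq:r2}. Applying the quantitative tail of Lemma~\ref{lem:O2} on the compact set $\tilde\Omega \doteq X_t^\mu(\Omega)\cup\{X_t^\mu(x)+\lambda\phi_t^\mu(x)\}$ (which is contained in $\bm B_r$), and using the Lipschitz bound $L_r$ for $D_xV_t$ and $D_\mu V_t$ on $\bm B_r\times\P(\bm B_r)$ (and $\bm B_r\times\P(\bm B_r)\times\bm B_r$), one obtains the estimate with the claimed dependence on $r$ and $L_r$ only.

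The only delicate point is ensuring that the reduction is legitimate: since Lemma~\ref{lem:O2} was stated with \emph{fixed} locally bounded $\phi,\psi$ and its constants depend only on the $L^\infty$ size of the perturbations on the compact region where measures and points live, no regularity of $\tilde\phi,\tilde\psi$ beyond local boundedness is needed, and this follows from the local boundedness of $\phi,\psi$ together with the bijectivity (hence surjectivity onto $\R^n$) of $X_t^\mu$. I expect this point, together with the careful identification of the constants in~\eqref{eq:r2} via the transformation $x \leftrightarrow X_t^\mu(x)$, to be the main obstacle in executing the argument rigorously, although no new analytic ingredient beyond those of Lemma~\ref{lem:O2} is required.
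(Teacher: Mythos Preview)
Your reduction is correct and is a clean way to obtain the lemma. The paper itself does not spell out a proof; it only remarks that ``arguments, similar to those of the previous proof'' yield the result, i.e., it envisions re-running the splitting and mean-value computations of Lemma~\ref{lem:O2} with \(X_t^\mu(x)\) in place of \(x\) and \(X^\mu_{t\sharp}\mu\) in place of \(\mu\). Your approach is slightly different in form: instead of repeating the estimates, you perform the change of variables \(\tilde x = X_t^\mu(x)\), \(\tilde\mu = X^\mu_{t\sharp}\mu\), \(\tilde\phi = \phi_t^\mu\circ (X_t^\mu)^{-1}\), \(\tilde\psi = \psi_t^\mu\circ (X_t^\mu)^{-1}\), and then invoke Lemma~\ref{lem:O2} as a black box. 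This buys you a shorter argument and makes the dependence of the constant transparent, since the \(r\) of Lemma~\ref{lem:O2} computed on \(\tilde\Omega = X_t^\mu(\Omega)\) coincides, after unwinding the substitution, with the \(r\) of~\eqref{eq:r2}.

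Two minor points to clean up. First, your proposed \(\tilde\Omega\) should simply be \(X_t^\mu(\Omega)\); there is no need (and it would be awkward, since it introduces a \(\lambda\)-dependence) to adjoin the perturbed point. What matters is that \(\tilde x\in X_t^\mu(\Omega)\) and \(\tilde\mu\in\P(X_t^\mu(\Omega))\), and that the quantity \(r\) from Lemma~\ref{lem:O2} evaluated on this set is bounded uniformly in \((t,\mu)\) by the \(r\) in~\eqref{eq:r2}. Second, bijectivity of \(X_t^\mu\) alone does not give local boundedness (or even Borel measurability) of \(\tilde\phi,\tilde\psi\) on all of \(\R^n\); what you actually need, and have, is that on \(X_t^\mu(\Omega)\) the inverse lands back in \(\Omega\), so \(|\tilde\phi|,|\tilde\psi|\) are bounded there by \(\sup_{I\times\Omega\times\P(\Omega)}\{|\phi|,|\psi|\}\). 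Since the proof of Lemma~\ref{lem:O2} only uses the perturbations on the support of the measure and at the base point, this suffices. (If you want global Borel measurability of the tilded maps, note that \(x\mapsto X_t^\mu(x)\) is continuous and bijective on \(\R^n\), hence a homeomorphism by invariance of domain, so \((X_t^\mu)^{-1}\) is continuous.)
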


The following 
presents a refined version of the formula \eqref{intrinsic} for the intrinsic derivative. 
\begin{lemma}
  \label{lem:O2-mod2}
  Let \( F\colon \P_{c}(\R^{n})\to \R \) be of class \( \C^{1,1} \), and \( \Phi^{\lambda}\colon \R^{n}\to\R^{n} \), \( \lambda\in [0,1] \), be a family of Borel maps which can be expanded as follows:
  \begin{equation}
    \label{eq:Phi_expansion}
    \Phi^{\lambda}(x) = \Phi^{0}(x) + \lambda \phi(x) + \Oloc(x;\lambda^{2}),
  \end{equation}
  for some \( \phi\colon \mathbb{R}^n\to \mathbb{R}^n \).
  Then,
  \begin{displaymath}
    F\left(\Phi^{\lambda}_{\sharp}\mu \right) - F(\Phi^{0}_{\sharp}\mu) - \lambda \int D_{\mu}F\left(\Phi^{0}_{\sharp}\mu,\Phi^{0}(y)\right)\phi(y)\d\mu(y) = \Oloc(\mu;\lambda^{2}).
  \end{displaymath}
\end{lemma}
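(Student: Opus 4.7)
The plan is to reduce the statement to the local Lipschitzianity of $D_\mu F$ by combining the integral representation of $F$ afforded by Definition~\ref{def:flat} with a fundamental-theorem-of-calculus expansion in the spatial variable, following the blueprint of Lemma~\ref{lem:O2}.

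First, I would localize. Fix a compact $\Omega\subset\R^n$ containing $\spt\mu$; since $\Phi^\lambda(x) = \Phi^0(x) + \lambda\phi(x) + \Oloc(x;\lambda^2)$ uniformly on $\Omega$, there exists a compact $\Omega'\supset\Omega$ such that $\Phi^\lambda(\Omega)\subset\Omega'$ for all $\lambda\in[0,1]$, and hence $\mu_s \doteq (1-s)\Phi^0_\sharp\mu + s\,\Phi^\lambda_\sharp\mu \in\P(\Omega')$ for every $s\in[0,1]$. All constants below will be allowed to depend on $\Omega'$ through the local bounds and local Lipschitz constants of $D_\mu F$ granted by the $\C^{1,1}$ assumption.

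Next, I would apply Definition~\ref{def:flat} with $\mu \leadsto \Phi^0_\sharp\mu$ and $\mu' \leadsto \Phi^\lambda_\sharp\mu$, and then push back to the variable $y$ via the change of variables formula:
\begin{displaymath}
F(\Phi^\lambda_\sharp\mu) - F(\Phi^0_\sharp\mu) = \int_0^1\!\!\int\left[\frac{\delta F}{\delta\mu}\bigl(\mu_s,\Phi^\lambda(y)\bigr) - \frac{\delta F}{\delta\mu}\bigl(\mu_s,\Phi^0(y)\bigr)\right]\d\mu(y)\d s.
\end{displaymath}
Since $\frac{\delta F}{\delta\mu}$ is $\C^1$ in $y$ with derivative $D_\mu F$, the bracket equals
\begin{displaymath}
\int_0^1 D_\mu F\bigl(\mu_s,\Phi^0(y) + r(\Phi^\lambda(y)-\Phi^0(y))\bigr)\cdot\bigl(\Phi^\lambda(y)-\Phi^0(y)\bigr)\d r.
\end{displaymath}

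Now I would subtract the target term $\lambda\int D_\mu F(\Phi^0_\sharp\mu,\Phi^0(y))\cdot\phi(y)\d\mu(y)$ and bound the remainder by splitting the difference into three contributions: (a) replacing the argument $\Phi^0(y)+r(\Phi^\lambda-\Phi^0)(y)$ by $\Phi^0(y)$, which costs $\lambda$ via local Lipschitzianity of $D_\mu F$ in its second spatial slot together with $|\Phi^\lambda-\Phi^0|=\Oloc(\lambda)$; (b) replacing $\mu_s$ by $\Phi^0_\sharp\mu$, which costs $W_2(\mu_s,\Phi^0_\sharp\mu)$, and this distance is $\Oloc(\lambda)$ by the convexity estimate~\eqref{eq:convex} applied as in Step~3 of the proof of Lemma~\ref{lem:O2} combined with $W_2(\Phi^0_\sharp\mu,\Phi^\lambda_\sharp\mu)\le \|\Phi^\lambda-\Phi^0\|_{L^2_\mu} = \Oloc(\lambda)$; (c) replacing $\Phi^\lambda(y)-\Phi^0(y)$ by $\lambda\phi(y)$, which is an $\Oloc(\lambda^2)$ error directly from~\eqref{eq:Phi_expansion}. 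Multiplying the $\Oloc(\lambda)$ factors in (a) and (b) by the $\Oloc(\lambda)$ factor $|\Phi^\lambda-\Phi^0|$ yields $\Oloc(\lambda^2)$, and (c) is already of that order.

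The main obstacle, as in Lemma~\ref{lem:O2}, is organizing the bookkeeping so that the resulting constant genuinely depends only on compact-set data and not on $\lambda$: one must confirm that all intermediate arguments of $D_\mu F$ lie in a single compact set (handled by the choice of $\Omega'$ above), and that the Wasserstein estimate $W_2(\mu_s,\Phi^0_\sharp\mu) = \Oloc(\lambda)$ is uniform in $s\in[0,1]$. After that, collecting the three contributions under a single integral in $(s,r,y)$ and applying Cauchy--Schwarz where needed gives the desired $\Oloc(\mu;\lambda^2)$ bound.
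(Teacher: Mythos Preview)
Your argument is correct. The paper, however, organizes the proof through a different decomposition: it inserts the auxiliary map $\Phi^{0}+\lambda\phi$ and writes
\[
F(\Phi^{\lambda}_{\sharp}\mu) - F(\Phi^{0}_{\sharp}\mu)
= \bigl[F(\Phi^{\lambda}_{\sharp}\mu) - F((\Phi^{0}+\lambda\phi)_{\sharp}\mu)\bigr]
+ \bigl[F((\Phi^{0}+\lambda\phi)_{\sharp}\mu) - F(\Phi^{0}_{\sharp}\mu)\bigr].
\]
The second bracket is precisely the situation of Lemma~\ref{lem:O2-mod} (with $X=\Phi^{0}$, $\psi=\phi$, applied to the $\C^{1,1}$ object $F$), which yields the linear term plus $\Oloc(\lambda^{2})$ without further work. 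The first bracket is disposed of in one line: by Lemma~\ref{lem:C1_lip} the functional $F$ is locally Lipschitz, so it is bounded by $L_{\Omega}\,W_{2}\bigl(\Phi^{\lambda}_{\sharp}\mu,(\Phi^{0}+\lambda\phi)_{\sharp}\mu\bigr)\le L_{\Omega}\,\|\Phi^{\lambda}-\Phi^{0}-\lambda\phi\|_{L^{2}_{\mu}}$, which is $\Oloc(\lambda^{2})$ directly from~\eqref{eq:Phi_expansion}. Your three-term split (a)--(c) is essentially this same computation unfolded by hand: contributions (a) and (b) reproduce the proof of Lemma~\ref{lem:O2} for $F$, while (c) plays the role of the paper's first bracket. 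The paper's route is shorter and more modular because it recycles the earlier lemma wholesale; your route is self-contained and avoids introducing the intermediate pushforward $(\Phi^{0}+\lambda\phi)_{\sharp}\mu$.
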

\begin{proof}
  In view of Lemma~\ref{lem:O2-mod}, it suffices to show that
  \begin{displaymath}
    F\left(\Phi^{\lambda}_{\sharp}\mu  \right) - F\left( (\Phi^{0}+\lambda\phi)_{\sharp}\mu \right) = \Oloc(\mu; \lambda^{2}).
  \end{displaymath}
  According to Lemma~\ref{lem:C1_lip}, \( F \) is locally Lipschitz.
  Hence, for any compact \( \Omega\subset \R^{n} \), there exists \( L_{\Omega}>0 \) such that
\begin{align*}
  \Big\vert F\left(\Phi^{\lambda}_{\sharp}\mu\right) - F\left((\Phi^{0}+\lambda\phi)_{\sharp}\mu\right)\Big\vert
  &\leq L_{\Omega}W_{2} \left( \Phi^{\lambda}_{\sharp}\mu, (\Phi^{0} +\lambda\phi)_{\sharp}\mu \right)\\
  &\leq L_{\Omega}\left\| \Phi^{\lambda} - \Phi^0 - \lambda\phi\right\|_{L^{2}_{\mu}},
\end{align*}
for all \( \mu\in \P(\Omega) \).
Now, by using~\eqref{eq:Phi_expansion}, we complete the proof.
\end{proof}

\subsection{Derivative of the flow}
\label{subsec:flow_deriv}

Recall that, for a fixed initial measure, any sublinear \( \C^{1,1} \) nonlocal vector field (n.v.f.) \( V \) generates a map \( X \) that can be thought of as its flow.
We shall study the flow \( X^{\lambda} \) of the perturbed n.v.f. \( V^{\lambda} =  V + \lambda W \), where \( W \) is also sublinear \( \C^{1,1} \), and \( \lambda\in [0,1] \).

{The results of this section, which provide the linearization of the nonlocal flow, are largely similar to those of \cite[sec. 3.2]{BonnetFrankowska2021a} (both in their statements and their proofs). 
However, in contrast to \cite{BonnetFrankowska2021a}, we accept here nonlocal perturbations of the vector field.
On the other hand, we impose slightly more restrictive assumptions, enabling us to expand the nonlocal flow up to the term of order $O(\lambda^2)$ rather than $o(\lambda)$ as demonstrated in \cite{BonnetFrankowska2021a}. This fact will play a crucial role in establishing the convergence of our numerical algorithm in Section~\ref{subsec:alg}.
}
\begin{theorem}
  \label{thm:dflow}
  Let \( V, W \) be sublinear \( \C^{1,1} \) nonlocal vector fields, \( X \) be the flow of \( V \) and \( X^{\lambda} \) be the flow of \( V^{\lambda}\doteq V+\lambda W \), where \( \lambda\in [0,1] \).
  Then
\[
X^{\lambda}- X - \lambda w = \Oloc(\lambda^{2}),
\]
  where \( w\colon I\times \mathbb{R}^n \times \mathcal P_c(\mathbb{R}^n)\to \mathbb{R}^n \) satisfies the differential equation
  \begin{align}
    \partial_tw^{\vartheta}_{t}\left(x\right)
    &=
    D_xV_{t}\left(X^{\vartheta}_{t}(x),X^{\vartheta}_{t\sharp}\vartheta\right)w^{\vartheta}_t(x) +
    \int D_{\mu}V_{t}\left(X^{\vartheta}_{t}(x),X^{\vartheta}_{t\sharp}\vartheta, X^{\vartheta}_{t}(y)\right)w^{\vartheta}_{t}\left(y\right)\d\vartheta(y) \notag \\
    &+ W_{t}\left(X^{\vartheta}_{t}(x) ,X^{\vartheta}_{t\sharp}\vartheta\right)
    \label{eq:w}
  \end{align}
  and the initial condition
  \begin{equation}
    \label{eq:w_init}
    w^{\vartheta}_{0}(x) = 0.
  \end{equation}
  Moreover, the constant \( C_{\Omega} \) which guaranties the estimate
  \begin{displaymath}
    \Oloc(t,x,\vartheta;\lambda^{2})\leq C_{\Omega}\lambda^{2}\qquad \forall (t,x,\vartheta)\in I\times\Omega\times\P(\Omega)
  \end{displaymath}
  depends only on the constants \( \rho \), \( C_{\rho} \), \( r \), \( L_{r} \) defined below by~\eqref{eq:rho}, \eqref{eq:Cbound}, \eqref{eq:Rr}, \eqref{eq:lip_bound}.
\end{theorem}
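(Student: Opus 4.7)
The plan is to prove the expansion in three stages: a preliminary $\mathcal{O}_{\rm loc}(\lambda)$ estimate on the raw displacement, then an application of Lemma~\ref{lem:O2-mod} to identify its leading order, and finally a Gronwall argument on the remainder to obtain the $\mathcal{O}_{\rm loc}(\lambda^{2})$ bound. Throughout, I fix a compact $\Omega \subset \mathbb{R}^n$ and $\vartheta \in \mathcal{P}(\Omega)$, choose $\rho$ so that the supports $\spt X^{\lambda,\vartheta}_{t\sharp}\vartheta \cup \spt X^{\vartheta}_{t\sharp}\vartheta \subset \bm B_{\rho}$ uniformly in $t \in I$ and $\lambda \in [0,1]$ (which exists by the sublinearity of $V$ and $W$ and a standard Gronwall-type argument), and take $C_\rho$ to bound $V,W,D_xV,D_\mu V$ on $\bm B_\rho \times \mathcal{P}(\bm B_\rho)$.

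Before anything, I would establish existence and uniqueness of $w$. Equation~\eqref{eq:w}, for fixed $\vartheta$, is a linear nonlocal inhomogeneous ODE of the form $\partial_t u_t(x) = A_t(x)\,u_t(x) + \int B_t(x,y)\,u_t(y)\,d\vartheta(y) + C_t(x)$ with bounded Carath\'eodory coefficients; a standard Banach fixed-point argument in $C(I; C_b(\spt\vartheta; \mathbb{R}^n))$ yields a unique solution, together with a bound on $w^\vartheta_t(x)$ depending only on $\rho$, $C_\rho$ and $T$. Call this solution $w^\vartheta$.

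Set $r^\lambda_t(x) \doteq X^{\lambda,\vartheta}_t(x) - X^\vartheta_t(x)$. For the preliminary estimate, subtracting the two defining ODEs and using the local Lipschitz continuity of $V$ together with the inequality $W_2(X^{\lambda,\vartheta}_{t\sharp}\vartheta, X^\vartheta_{t\sharp}\vartheta) \le \|r^\lambda_t\|_{L^2_\vartheta} \le \sup_{x \in \Omega} |r^\lambda_t(x)|$, a Gronwall argument on $M(t) \doteq \sup_{x \in \Omega}|r^\lambda_t(x)|$ yields $|r^\lambda_t(x)| \le C_1 \lambda$ uniformly in $t,x,\vartheta$, with $C_1$ depending only on $\rho, C_\rho$. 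This fixes the radius $r$ of~\eqref{eq:r2} and hence a Lipschitz constant $L_r$ for $D_xV$ and $D_\mu V$ that controls all forthcoming error terms.

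The main step is to compare $r^\lambda$ with $\lambda w^\vartheta$. Define $q^\lambda_t(x) \doteq r^\lambda_t(x) - \lambda w^\vartheta_t(x)$; its initial value is zero. Writing $X^{\lambda,\vartheta}_t(x) = X^\vartheta_t(x) + r^\lambda_t(x)$ and $X^{\lambda,\vartheta}_{t\sharp}\vartheta = (X^\vartheta_t + r^\lambda_t)_\sharp\vartheta$, Lemma~\ref{lem:O2-mod} applied with $\phi^\mu_t = \psi^\mu_t \doteq r^\lambda_t$ (which is itself $\mathcal{O}_{\rm loc}(\lambda)$ by the previous step, so the implicit $\lambda$-factors in that lemma propagate correctly) yields
\begin{align*}
V_t\!\left(X^{\lambda,\vartheta}_t(x), X^{\lambda,\vartheta}_{t\sharp}\vartheta\right)  - V_t\!\left(X^\vartheta_t(x), X^\vartheta_{t\sharp}\vartheta\right)
&= D_xV_t \!\cdot r^\lambda_t(x) \\
&\quad + \int D_\mu V_t \cdot r^\lambda_t(y)\,d\vartheta(y) + \mathcal{O}_{\rm loc}(\lambda^2),
\end{align*}
while the local Lipschitzianity of $W$ gives $\lambda W_t(X^{\lambda,\vartheta}_t(x), X^{\lambda,\vartheta}_{t\sharp}\vartheta) = \lambda W_t(X^\vartheta_t(x), X^\vartheta_{t\sharp}\vartheta) + \mathcal{O}_{\rm loc}(\lambda^2)$. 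Subtracting $\lambda$ times equation~\eqref{eq:w}, the inhomogeneous and the $W$ terms cancel exactly, leaving
\[
\partial_t q^\lambda_t(x) = D_xV_t \cdot q^\lambda_t(x) + \int D_\mu V_t \cdot q^\lambda_t(y)\,d\vartheta(y) + \mathcal{O}_{\rm loc}(\lambda^2),
\]
with $q^\lambda_0 = 0$. A final Gronwall argument, run on $\sup_{x \in \Omega}|q^\lambda_t(x)|$ and bounding the integral by the same sup-norm, yields $|q^\lambda_t(x)| \le C_2 \lambda^2$, which is the required $\mathcal{O}_{\rm loc}(\lambda^2)$ estimate. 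The constant $C_2$ is manufactured only from $\rho, C_\rho, r, L_r$ and $T$, as required. The main obstacle is the correct packaging of the nonlocal term in the Gronwall step: one must estimate $\int D_\mu V_t(\ldots,X^\vartheta_t(y))\,q^\lambda_t(y)\,d\vartheta(y)$ by $C_\rho \sup_y |q^\lambda_t(y)|$ and then take the supremum over $x$ on both sides before invoking Gronwall, which is possible precisely because $D_\mu V$ is locally bounded uniformly in $y$.
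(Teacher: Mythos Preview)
Your argument is correct, but it follows a genuinely different route from the paper's.

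The paper sets up a one-parameter family of contraction maps $\mathcal{F}(\lambda,\cdot)$ on a weighted function space whose fixed points are the flows $X^{\lambda}$, and then invokes a fixed-point perturbation result (Theorem~A.2.1 of Bressan--Piccoli) to obtain in one stroke
\[
\|X^{\lambda}-X-\lambda w\|_{\sigma}\le \frac{1}{1-\kappa}\,\|\mathcal{F}(\lambda,X+\lambda w)-(X+\lambda w)\|_{\sigma}.
\]
The right-hand side is then shown to be $\Oloc(\lambda^{2})$ by a single application of Lemma~\ref{lem:O2-mod}, because $w$ is known explicitly and bounded a priori by~\eqref{eq:w_bound}; no preliminary $O(\lambda)$ estimate on $X^{\lambda}-X$ is needed. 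Your approach, by contrast, is the classical two-stage Gronwall: first bootstrap $|r^{\lambda}_t|\le C_1\lambda$, then linearize and run Gronwall again on the remainder $q^{\lambda}$. Both reach the same conclusion with constants depending only on the stated data; your route is more elementary (no external fixed-point theorem), while the paper's is shorter and avoids the intermediate step.

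One point worth tightening: Lemma~\ref{lem:O2-mod} is stated for perturbations of the form $\lambda\phi$ with $\phi$ \emph{independent} of $\lambda$, so applying it ``with $\phi^{\mu}_t=\psi^{\mu}_t=r^{\lambda}_t$'' is not literally admissible. What you need is to take $\phi=\psi=r^{\lambda}_t/\lambda$, observe that this is bounded by $C_1$ uniformly in $\lambda$ thanks to your first step, and note that the constant $C_{\Omega}$ in the lemma depends on $\phi,\psi$ only through the quantity $r$ in~\eqref{eq:r2}; hence the remainder is $\Oloc(\lambda^{2})$ with a constant uniform in $\lambda$. Your parenthetical remark indicates you see this, but it should be said explicitly rather than by allusion to ``implicit $\lambda$-factors''.
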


\begin{remark}
  First, notice that \( \vartheta \) in~\eqref{eq:w} can be considered as a parameter.
  Thus,~\eqref{eq:w} can be thought of as ``linear transport equation with nonlocal source term''.
  One can easily show (for example, by fixed-point arguments) that~\eqref{eq:w}, \eqref{eq:w_init} has a unique continuous solution \( w \) (see also~\cite{BonnetFrankowska2021a,BonnetRossi2019}, where such solution is constructed explicitly for the case \( W_{t}(x,\mu)\equiv W_{t}(x) \)).
  Moreover, \( w \) is sequentially continuous as a function of \( t \), \( x \), \( \vartheta \).
\end{remark}

Before presenting the proof, note that our assumptions on \( V \) and \( W \) imply that there exists \( C>0 \) such that \( \left\vert V^{\lambda}_{t}(x,\mu)\right\vert\leq C\left(1+\vert x\vert \right) \), for all \( t \), \( x \), \( \mu \), \( \lambda \).
This means that \( \left\vert X^{\lambda,\vartheta}_{t}(x) \right\vert\leq e^{Ct}(Ct+\vert x\vert ) \) for all \( t \), \( x \), \( \vartheta \), \( \lambda \).
As a consequence, $  (t,x,\vartheta)\mapsto \left( t,X^{\vartheta,\lambda}_{t}(x), X^{\vartheta,\lambda}_{t\sharp}\vartheta \right)$
maps \( I\times\Omega\times\P(\Omega)  \) into \( I\times\bm B_{\rho}\times\P(\bm B_{\rho})  \), where
\begin{equation}
  \label{eq:rho}
  \rho \doteq \max\left\{e^{CT}(CT+\vert x\vert )\colon\; x\in \Omega\right\}.
\end{equation}
Using the local boundedness of \( D_{x}V \), \( D_{\mu}V \) and \( W \), we can find \( C_{\rho}>0 \) such that
\begin{equation}
  \label{eq:Cbound}
  \vert D_{x}V\vert \leq C_{\rho}\quad \vert D_{\mu}V\vert\leq C_{\rho},\quad \vert W\vert\leq C_{\rho} \quad\text{on}\quad I\times \bm B_{\rho}\times \P(\bm B_{\rho}).
\end{equation}
Now, it follows from~\eqref{eq:w}, \eqref{eq:w_init} that
\begin{equation}
  \label{eq:w_bound}
  \vert w\vert \leq C_{\rho}e^{2C_{\rho}T}\quad \text{on}\quad I\times\Omega\times\P(\Omega).
\end{equation}
This implies that $  (t,x,\vartheta)\mapsto \left( t,(X^{\lambda,\vartheta}_{t}+\lambda w^{\vartheta}_{t})(x), (X^{\lambda,\vartheta}_{t}+\lambda w^{\vartheta}_{t})_{\sharp}\vartheta \right)$
maps \( I\times\Omega\times\P(\Omega)  \) into \( I\times\bm B_{r}\times\P(\bm B_{r})  \), where
\begin{equation}
  \label{eq:Rr}
  r\doteq \rho + C_{\rho}e^{2C_{\rho}T}.
\end{equation}
Finally, since \( V^{\lambda} \), \( D_{x}V^{\lambda} \) and \( D_{\mu}V^{\lambda} \) are locally Lipschitz, we choose \( L_{r}>0 \) such that
\begin{equation}
  \label{eq:lip_bound}
  \begin{matrix}
  \lip(V^{\lambda}_{t})\leq L_{r},\quad \lip(D_{x}V^{\lambda}_{t})\leq L_{r}\quad\text{on}\quad \bm B_{r}\times\P(\bm B_{r}),\\ \lip(D_{\mu}V^{\lambda}_{t})\leq L_{r}\quad\text{on}\quad\bm B_{r}\times \P(\bm B_{r})\times \bm B_{r},
  \end{matrix}
\end{equation}
for all \( t\in I \) and \( \lambda\in [0,1] \).

Fix a compact set \( \Omega\subset \R^{n} \) and a measure \( \vartheta\in \P(\Omega) \).
From now on, we will omit the index \( \vartheta \) in \( X^{\lambda,\vartheta}_{t} \) and \( w_{t}^{\vartheta} \).
Consider the following set:
\begin{displaymath}
  \mathcal{X}(\Omega) = \left\{\phi\in C^{0}(I\times\Omega;\R^{n})\colon \vert\phi_{t}(x)\vert\leq e^{Ct}(Ct+\vert x\vert)\right\},
\end{displaymath}
and equip it with the norm $ \|\phi\|_{\sigma} = \max_{I\times \Omega} e^{-\sigma t}\vert\phi_{t}(x)\vert$, $\sigma>0$. 
Since \( \|\cdot\|_{\sigma} \) is equivalent to the standard \( \sup \) norm, \( \mathcal{X}(\Omega) \) becomes a complete metric space.

Finally, for any \( \lambda\in [0,1] \) and \( \phi\in \mathcal{X}(\Omega) \), we define
\begin{displaymath}
  \mathcal{F}(\lambda,\phi)(t,x) = x + \int_{0}^{t}V_{\tau}^{\lambda}\left(\phi_{\tau}(x), \phi_{\tau\sharp}\vartheta\right)\d \tau,\quad
  t\in I,\; x\in \Omega.
\end{displaymath}
One can easily check that \( \mathcal{F} \) maps \( [0,1]\times \mathcal{X}(\Omega) \) to \( \mathcal{X}(\Omega) \).

\begin{lemma}
  \label{lem:contraction}
  The map \( \phi\mapsto \mathcal{F}(\lambda,\phi) \) is contractive in the \( \sigma \)-norm for all sufficiently large \( \sigma \). Moreover, the corresponding Lipschitz constant \( \kappa < 1 \) does not depend on \( \lambda \).
\end{lemma}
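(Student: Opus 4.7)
The plan is to derive the contraction estimate by (i) locating the values of any $\phi \in \mathcal{X}(\Omega)$ inside a ball on which $V^\lambda$ has a uniform-in-$\lambda$ Lipschitz constant, (ii) using the standard bound $W_2(\phi_\sharp\vartheta,\psi_\sharp\vartheta) \leq \|\phi-\psi\|_{L^2_\vartheta}$ for push-forwards, and (iii) exploiting the weight $e^{-\sigma t}$ through a Grönwall-style integration that yields a factor $1/\sigma$.

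First, I would observe that any $\phi \in \mathcal{X}(\Omega)$ satisfies $|\phi_t(x)| \leq \rho$ for all $(t,x) \in I\times\Omega$, where $\rho$ is the constant from~\eqref{eq:rho}. Consequently $\phi_{t\sharp}\vartheta \in \P(\bm B_\rho) \subset \P(\bm B_r)$, and the same inclusion holds for $\psi_t(x)$ and $\psi_{t\sharp}\vartheta$. Hence the Lipschitz bound~\eqref{eq:lip_bound} applies uniformly in $\lambda \in [0,1]$: for every $\tau$,
\[
\bigl|V^\lambda_\tau(\phi_\tau(x),\phi_{\tau\sharp}\vartheta) - V^\lambda_\tau(\psi_\tau(x),\psi_{\tau\sharp}\vartheta)\bigr|
\leq L_r\bigl(|\phi_\tau(x)-\psi_\tau(x)| + W_2(\phi_{\tau\sharp}\vartheta,\psi_{\tau\sharp}\vartheta)\bigr).
\]

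Next, I would use the classical push-forward inequality $W_2(\phi_{\tau\sharp}\vartheta,\psi_{\tau\sharp}\vartheta) \leq \|\phi_\tau-\psi_\tau\|_{L^2_\vartheta} \leq \sup_{y\in \Omega}|\phi_\tau(y)-\psi_\tau(y)|$, which holds because $\vartheta \in \P(\Omega)$ and the map $(x,y) \mapsto (\phi_\tau(x),\psi_\tau(y))$ restricted to the diagonal transport plan $(\id,\id)_\sharp\vartheta$ gives an admissible coupling. Integrating the above estimate in $\tau$ and taking the supremum in $x\in\Omega$, one arrives at
\[
\sup_{x\in\Omega}\bigl|\mathcal{F}(\lambda,\phi)(t,x)-\mathcal{F}(\lambda,\psi)(t,x)\bigr|
\leq 2L_r \int_0^t \sup_{y\in\Omega}|\phi_\tau(y)-\psi_\tau(y)|\,\d\tau.
\]

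Finally, I would multiply by $e^{-\sigma t}$, insert the factor $e^{-\sigma\tau}e^{\sigma\tau}$ under the integral, and bound the resulting expression by
\[
e^{-\sigma t}\sup_{x\in\Omega}\bigl|\mathcal{F}(\lambda,\phi)(t,x)-\mathcal{F}(\lambda,\psi)(t,x)\bigr|
\leq 2L_r \|\phi-\psi\|_\sigma \int_0^t e^{-\sigma(t-\tau)}\,\d\tau
\leq \frac{2L_r}{\sigma}\|\phi-\psi\|_\sigma.
\]
Choosing any $\sigma > 2L_r$ gives contractivity with constant $\kappa = 2L_r/\sigma < 1$, which is manifestly independent of $\lambda$ since $L_r$ is chosen uniformly in $\lambda \in [0,1]$ in~\eqref{eq:lip_bound}. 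I do not anticipate a real obstacle; the only subtle point is making sure the Lipschitz constant on the relevant ball is genuinely $\lambda$-independent, which is guaranteed by the way $r$ and $L_r$ were defined prior to the lemma.
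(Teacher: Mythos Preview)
Your proposal is correct and follows essentially the same route as the paper: both arguments use the uniform-in-$\lambda$ Lipschitz bound $L_r$ from~\eqref{eq:lip_bound}, estimate the Wasserstein term by the $L^2_\vartheta$ (hence sup) norm of $\phi_\tau-\psi_\tau$, and then exploit the weight $e^{-\sigma t}$ to produce the factor $2L_r/\sigma$, concluding contractivity for $\sigma>2L_r$ with $\kappa=2L_r/\sigma$ independent of~$\lambda$. The only cosmetic difference is that you explicitly note $\phi_t(x)\in\bm B_\rho\subset\bm B_r$, whereas the paper invokes $r$ directly.
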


\begin{proof}
  Let \( r \) be defined by~\eqref{eq:Rr}.
  Given \( \phi,\psi\in \mathcal{X}(\Omega) \), we have
  \begin{align*}
    \left\vert  \mathcal{F}(\lambda,\phi) - \mathcal{F}(\lambda,\psi)\right\vert(t,x)
    &\leq \int_{0}^{t}\left\vert V^{\lambda}_{\tau}\left(\phi_{\tau}(x),\phi_{\tau\sharp}\vartheta\right) - V^{\lambda}_{\tau}\left(\psi_{\tau}(x),\psi_{\tau\sharp}\vartheta\right) \right\vert\d\tau\\
    &\leq L_{r}\int_{0}^{t}\left(\left\|\phi_{\tau}-\psi_{\tau}\right\|_{\C^{0}(\Omega;\R^{n})}+\left\|\phi_{\tau}-\psi_{\tau}\right\|_{L^{2}_{\vartheta}}\right)\d\tau,
  \end{align*}
  for any \( t\in I \), \( x\in \Omega \), \( \lambda\in [0,1] \).
  Since
  $\left\|\phi_{\tau}-\psi_{\tau}\right\|_{L^{2}_{\vartheta}}\leq \left\|\phi_{\tau}-\psi_{\tau}\right\|_{\C^{0}(\Omega;\R^{n})},$
  we obtain:
  \begin{align*}
    \left\| \mathcal{F}(\lambda,\phi)_{t} - \mathcal{F}(\lambda,\psi)_{t}\right\|_{\C^{0}(\Omega;\R^{n})}
    &\leq 2L_{r}\int_{0}^{t}\left\|\phi_{\tau}-\psi_{\tau}\right\|_{\C^{0}(\Omega;\R^{n})}\d\tau.
  \end{align*}
  Then, for all \( t\in I \),
  \begin{align*}
    e^{-\sigma t}\left\| \mathcal{F}(\lambda,\phi)_{t} - \mathcal{F}(\lambda,\psi)_{t}\right\|_{\C^{0}(\Omega;\R^{n})}
    &\leq 2L_{r}e^{-\sigma t}\int_{0}^{t}e^{\sigma \tau}\left\|\phi-\psi\right\|_{\sigma}\d\tau\\
    &\leq \frac{2L_{r}}{\sigma}\left\|\phi-\psi\right\|_{\sigma},
  \end{align*}
  which means that \( \mathcal{F}(\lambda,\cdot) \) is contractive for any \( \sigma>2L_{r} \).
\end{proof}

\begin{proofof}{Theorem~\ref{thm:dflow}}
Let $\sigma$ be chosen so that $\sigma>2L_r$ as in the proof of Lemma~\ref{lem:contraction}.
  By definition, \( X^{\lambda} \) is a fixed point of \( \mathcal{F}(\lambda,\cdot) \) for any \( \lambda\in [0,1] \).
  Therefore, by Theorem A.2.1 in~\cite{BressanPiccoli2007},
  \begin{equation}
    \label{eq:key_derivative}
    \left\| X^{\lambda} - X - \lambda w \right\|_{\sigma}\leq \frac{1}{(1-\kappa)}\left\|\mathcal{F}\left(\lambda,X + \lambda w \right) -X - \lambda w  \right\|_{\sigma},
  \end{equation}
  where \( \kappa\doteq 2L_{r}/\sigma < 1 \).

  It remains to estimate the right-hand side of~\eqref{eq:key_derivative}.
  Since \( X = \mathcal{F}(0,X) \), we obtain
  \begin{align*}
    \mathcal{F}&\left(\lambda,X_{t} + \lambda w_{t} \right)(t,x) - X_{t}(x) \\
    &= \int_{0}^{t} \left[V_{\tau}\left(X_{\tau}(x)+\lambda w_{\tau}(x),\left(X_{\tau}+\lambda w_{\tau}\right)_{\sharp}\vartheta\right)-V_{\tau}\left(X_{\tau}(x),X_{\tau\sharp}\vartheta\right)\right]\d\tau\\
    &+ \lambda\int_{0}^{t} W_{\tau}\left(X_{\tau}(x)+\lambda w_{\tau}(x),\left(X_{\tau}+\lambda w_{\tau}\right)_{\sharp}\vartheta\right)\d\tau.
  \end{align*}
Lemma~\ref{lem:O2-mod} demonstrates that the first integrand is equal to
  \begin{align*}
    \lambda D_{x}V_{\tau}\left(X_{\tau}(x),X_{\tau\sharp}\vartheta\right)w_{\tau}(x)
    &+ \lambda\int D_{\mu}V_{\tau}\left(X_{\tau}(x),X_{\tau\sharp}\vartheta,X_{\tau}(y)\right)w_{\tau}\left(y\right)\d \vartheta(y)\\
    &+\Oloc(t,x,\vartheta;\lambda^{2}),
  \end{align*}
  and the second one can be rewritten as
 \(  \lambda W_{\tau} \left( X_{\tau}(x), X_{\tau\sharp}\vartheta \right) + \Oloc(t,x,\vartheta;\lambda^{2})\).
  Now, the statement follows from~\eqref{eq:w}.
  The fact that \( C_{\Omega} \) depends only on \( \rho \), \( C_{\rho} \), \( r \), \( L_{r} \) is the consequence of~\eqref{eq:Rr}, \eqref{eq:lip_bound} and the second part of Lemma~\ref{lem:O2-mod}.
\end{proofof}

\section{Increment formula}
\label{sec:incr}

Now, we turn to the analysis of the increment of the cost functional along an adequate class of control variations. The theory of Pontryagin's maximum principle is commonly built around the class of needle-shaped variation. However, for the specified control-affine case, the latter can be replaced by a simpler class of \emph{weak} control variations. 

\subsection{Problem specification}

In this section, in order to simplify the presentation, we assume that the driving vector field $V$ is affine in control variable $u$, i.e.,
\begin{equation}
    V_{t}(x, \mu, u) = V^0_{t}(x, \mu) + \sum_{j=1}^{m} V^j_{t}(x, \mu) \, u_j,\quad{u_j\in \mathbb R},\label{eq:VF-specif}
\end{equation}
and the running cost is identically zero, i.e., $L \equiv 0$.
Later, in Sect.~\ref{sec:gen} we will discuss how to deal with the general case.
We begin by listing our basic assumptions.
\vspace{5pt}

\noindent\textbf{Assumption $(\bm A_{1})$:}
\begin{enumerate}
  \item \( V \) takes the form~\eqref{eq:VF-specif}, where all \( V^{j} \) with \( 0\leq j\leq m \) are of class \( \C^{1,1} \);
  \item \( U\subset \R^{m} \) is compact and convex;
  \item \( \ell\colon \P_{c}(\R^{n})\to \R \) is of class \( \C^{1,1} \).
\end{enumerate}

\vspace{5pt}

\noindent\textbf{Assumption \( (\bm A_{2}) \):} all maps \( D_{x}V^{j} \) with \( 0\leq j\leq m \) are continuously differentiable in \( x \) and their derivatives are locally bounded.

\begin{remark}
  \label{rem:smooth_w}
  We use Assumption \( (\bm A_{2}) \) only once: to show that, for any fixed control function \( u\in \mathcal{U} \), the solution \( w \) of~\eqref{eq:w}, \eqref{eq:w_init} which corresponds to \( V_{t}(x,\mu)\doteq V_{t}(x,\mu,u(t)) \) is \( \C^{1} \) in \( x \).
  Indeed, \( t\mapsto w^{\vartheta}_{t}(x) \) satisfies the ODE: $\displaystyle\frac{d}{dt}w_{t} = A(t,x)w_{t} + b(t,x),$
  where both functions
  \begin{align*}
    A(t,x) &\doteq D_xV_{t}\left(X^{\vartheta}_{t}(x),X^{\vartheta}_{t\sharp}\vartheta\right),\mbox{ and}\\
    b(t,x) &\doteq \int D_{\mu}V_{t}\left(X_{t}^{\vartheta}(x),X^{\vartheta}_{t\sharp}\vartheta, X^{\vartheta}_{t}(y)\right)w^{\vartheta}_{t}\left(y\right)\d\vartheta(y)
    + W_{t}\left(X^{\vartheta}_{t}(x) ,X^{\vartheta}_{t\sharp}\vartheta\right)
 \end{align*}
  are continuously differentiable. Hence, \( w \) is \( \mathcal C^1 \) in \( x \), according to the standard ODE theory.
\end{remark}

\subsection{Increment formula I}

Further in this section, \( \vartheta \) is supposed to be fixed, so we will omit it when writing the arguments \( X \) and \( w \).

Let us fix a pair of control functions \( u,\bar u\in \mathcal{U} \), $u \neq \bar u$. We call \( u \) a \emph{reference control} and \( \bar u \) a \emph{target control}.
A \emph{weak variation} of \( u \) towards \( \bar u \) is the convex combination
\begin{equation}
u^{\lambda} \doteq u + \lambda(\bar u - u), \quad \lambda \in [0,1].\label{eq:u-lambda}
\end{equation}

In view of \eqref{eq:VF-specif}, the variation \eqref{eq:u-lambda} implies the following perturbation of the reference vector field \( V_{t}(x,\mu) \doteq V_{t}\left(x,\mu,u(t)\right) \):
\begin{align*}
V^\lambda_t(x, \mu) \doteq &V_{t}\left(x, \mu,u^{\lambda}(t)\right) = V_{t}(x, \mu) + \lambda W_t(x, \mu),\\
W_t(x, \mu) \doteq & \sum_{j=1}^{m} V^j_{t}(x, \mu) \, \left(\bar u_j(t) - u_j(t)\right).
\end{align*}

Note that, by Assumption \( (\bm A_{1}) \), there exists \( C>0 \) such that \( \left\vert V^{\lambda}_{t}(x,\mu) \right\vert\leq C(1+\vert x\vert) \), for all \( t \), \( x \), \( \mu \), \( \lambda \), \( u \), \( \bar u \).
This means that \( \rho \) from~\eqref{eq:rho} can be chosen independently from \( u, \bar u\in \mathcal{U} \).
Again, by Assumption \( (\bm A_{1}) \), we can find \( C_{\rho} \) which guarantees, for all \( u,\bar u\in \mathcal{U} \), the estimate~\eqref{eq:Cbound}, then construct \( r \) by~\eqref{eq:Rr} and find \( L_{r} \) such that~\eqref{eq:lip_bound} holds for all \( u,\bar u\in \mathcal{U} \).
Now, Theorem~\ref{thm:dflow} implies that
\begin{displaymath}
  X^{\lambda}_{T} - X_{T} - \lambda w_{T} = \Oloc(x,\vartheta,u,\bar u;\lambda^{2}),
\end{displaymath}
{where $w$ is a solution of~\eqref{eq:w}, \eqref{eq:w_init}.}
Here, we think of \( \mathcal{U} \) as a compact topological space equipped with the weak-\( * \) topology \( \sigma(L^{\infty},L^{1}) \).

Since \( \mathcal{I}[u] =  \ell(X_{T\sharp}\vartheta) \) and \( \mathcal{I}[u^{\lambda}] =  \ell(X^{\lambda}_{T\sharp}\vartheta) \) and \( \vartheta \) is fixed, we can use Lemma~\ref{lem:O2-mod2} to get
{
\begin{proposition}
Under assumptions \( (\bm A_{1}) \), \( (\bm A_{2}) \) one has
\begin{equation}
  \label{eq:increment_draft}
  \mathcal{I}[u^{\lambda}] - \mathcal{I}[u] = \lambda\int D_{\mu}\ell \left(X_{T\sharp}\vartheta,X_{T}(y)\right)\, w_T\left(y\right)\d\vartheta(y) + \mathcal{O}(u,\bar u;\lambda^{2}),
\end{equation}
where $w$ is a solution of the linear problem \eqref{eq:w}, \eqref{eq:w_init}.
\end{proposition}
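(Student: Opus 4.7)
The plan is to compose two results already established in the excerpt: Theorem~\ref{thm:dflow}, which linearizes the nonlocal flow at the reference control, and Lemma~\ref{lem:O2-mod2}, which transfers such a linearization through a $\C^{1,1}$ functional on $\P_c(\R^n)$. Since $L\equiv 0$, the cost reduces to $\mathcal{I}[u]=\ell(X_{T\sharp}\vartheta)$ and $\mathcal{I}[u^\lambda]=\ell(X^\lambda_{T\sharp}\vartheta)$, where $X^\lambda$ is the flow of the perturbed nonlocal vector field $V^\lambda = V+\lambda W$ with $V_t(x,\mu)=V_t(x,\mu,u(t))$ and $W_t(x,\mu)=\sum_{j=1}^m V^j_t(x,\mu)(\bar u_j(t)-u_j(t))$.

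First, I would verify the hypotheses of Theorem~\ref{thm:dflow}. Assumption $(\bm A_1)$ guarantees that $V$ and $W$ are sublinear $\C^{1,1}$ nonlocal vector fields, and compactness of $U$ combined with the uniform bounds on the coefficients $V^j$ makes the sublinearity constant independent of $u,\bar u\in\mathcal U$. Consequently, the quantities $\rho, C_\rho, r, L_r$ from~\eqref{eq:rho}--\eqref{eq:lip_bound} can be chosen uniformly in $u,\bar u$. Applying Theorem~\ref{thm:dflow} then yields the pointwise expansion
\[
X^\lambda_T(x) = X_T(x) + \lambda w_T(x) + \Oloc(x,\vartheta,u,\bar u;\lambda^2),
\]
where $w$ is the solution of the linear system~\eqref{eq:w}--\eqref{eq:w_init}. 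Crucially, the final clause of Theorem~\ref{thm:dflow} asserts that the remainder depends only on $\rho,C_\rho,r,L_r$, so the $\Oloc$ is indeed uniform over $u,\bar u\in\mathcal U$.

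Second, I would apply Lemma~\ref{lem:O2-mod2} with $F=\ell$ (which is $\C^{1,1}$ by $(\bm A_1)$), $\Phi^0=X_T$, $\Phi^\lambda=X^\lambda_T$, $\phi=w_T$, and $\mu=\vartheta$. The expansion hypothesis of the lemma is precisely the output of the previous step; note that $w_T$ is Borel since it is continuous (Remark following Theorem~\ref{thm:dflow}), and Assumption $(\bm A_2)$ moreover makes it $\C^1$ in $x$ through Remark~\ref{rem:smooth_w}. The lemma then delivers
\[
\ell(X^\lambda_{T\sharp}\vartheta)-\ell(X_{T\sharp}\vartheta)-\lambda\int D_\mu\ell\bigl(X_{T\sharp}\vartheta,X_T(y)\bigr)\,w_T(y)\,d\vartheta(y) = \Oloc(\vartheta;\lambda^2),
\]
which is exactly the claimed increment formula~\eqref{eq:increment_draft}.

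I do not foresee a real obstacle in this argument, as it is essentially a bookkeeping exercise combining two previously proved tools. The only delicate point is the uniformity of the remainder in $u,\bar u\in\mathcal U$; this is resolved by the quantitative nature of Theorem~\ref{thm:dflow} together with the observation, made just before the proposition, that the compactness of $U$ allows the constants $\rho, C_\rho, r, L_r$ to be chosen once and for all, independently of the chosen reference and target controls.
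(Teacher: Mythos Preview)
Your proposal is correct and follows essentially the same route as the paper: first invoke Theorem~\ref{thm:dflow} to obtain the expansion $X^{\lambda}_{T}-X_{T}-\lambda w_{T}=\Oloc(x,\vartheta,u,\bar u;\lambda^{2})$ with constants uniform in $u,\bar u$ (via the choice of $\rho,C_{\rho},r,L_{r}$ independent of the controls), and then feed this into Lemma~\ref{lem:O2-mod2} with $F=\ell$ to conclude. The only cosmetic difference is that the paper phrases the uniformity by viewing $\mathcal{U}$ as a compact space in the weak-$*$ topology, which is exactly the mechanism you describe.
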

}
Here we write \( \mathcal{O} \) instead of \( \Oloc \) because \( \mathcal{U} \) is already compact.

Our next goal is to rewrite this formula in a ``constructive'' form, namely, in terms of a Hamiltonian system associated to our optimal control problem.

\subsection{Hamiltonian system}

The Hamiltonian system associated with Problem $(P)$ (see~\eqref{eq:cost}-\eqref{eq:control}) is merely a continuity equation on the cotangent bundle of \( \R^n \), i.e., on the space $\R^{n}\times (\R^n)^* \simeq \R^{2n}$ comprised by pairs $(x, p)$, where \( x \) is the primal and \( p \) is the dual state variables.
In our case, this equation takes the form
\begin{equation}
      \label{eq:hamsys}
      \partial_t \gamma_t + \div_{(x,p)}\left(\vec{H}\left(\cdot, \cdot,\gamma_t, u(t)\right)\gamma_t\right)=0,
\end{equation}
\begin{equation}
\label{eq:DH}
  \vec{H}(x, p, \gamma, u)\doteq
  \begin{pmatrix}
    \displaystyle V_{t}(x, \pi^1_\sharp\gamma, u)\\[0.2cm]
    \displaystyle- p\, D_x V_t(x, \pi^1_\sharp\gamma,u) - \iint  q \, D_\mu V_t(y, \pi^1_\sharp\gamma, u, x)\d \gamma(y,q)
  \end{pmatrix}.
\end{equation}
This equation is supplemented with the terminal condition
\begin{equation}
  \label{eq:terminal}
  \gamma_{T} = \left(\id,-D_{\mu}\ell (\mu_T)\right)_{\sharp}\mu_{T},
\end{equation}
where \( \mu_{t} \) satisfies~\eqref{eq:conteq}. The standard well-posedness result for nonlocal continuity equations (see, e.g., \cite{POGODAEV20203585}) guarantees that~\eqref{eq:hamsys}, \eqref{eq:terminal} has a unique solution \( \gamma_{t} \).
Moreover, the projection of \( \gamma_{t} \) onto the \( x \) space coincides with \( \mu_{t} \):
\begin{equation}
\pi^{1}_{\sharp}\gamma_{t} = \mu_{t}\quad \forall t\in I.\label{eq:gamma-mu}
\end{equation}

\subsection{Increment formula II}

Let us go back to~\eqref{eq:increment_draft}. First, recalling that $\mu_T\doteq X_{T\sharp}\vartheta$, we express the integral entering in its right-hand side as follows:
\begin{align*}
  \int D_{\mu}\ell (\mu_T, x)\, & w_T\left(X_{T}^{-1}(x)\right)\d\mu_{T}(x)\\
  &= -\iint p \, w_T\left(X_{T}^{-1}(x)\right)\d\left[\left(\id,-D_{\mu}\ell (\mu_T)\right)_{\sharp}\mu_{T}\right](x,p)\\
  &= -\iint p \, w_T\left(X_{T}^{-1}(x)\right)\d\gamma_{T}(x,p).
\end{align*}
By Lemma~8.1.2~\cite{AGS}, the following version of the classical Newton-Leibniz formula holds for any function $\psi \in \C^1(I\times \R^{2n})$:
\begin{align}
&\iint \psi_T \d\gamma_{T} - \iint \psi_0 \d\gamma_{0} = \int_{0}^{T}\Big(\iint \Xi_t(x, p) \d \gamma_{t}(x,p)\Big)\d t,\label{eq:hamsys-weak-1}\\
  \Xi_t(x, p)
  &\doteq \partial_{t}\psi_t(x,p) + \nabla_{x}\psi_t(x,p) \, V_t\left(x, \mu_t\right) \nonumber \\
  &-\Big[p\, D_x V_t\left(x, \mu_t\right) + \iint  q \, D_\mu V_t\left(y, \mu_t,x \right)\d \gamma_t(y,q)\Big] \nabla_{p}\psi_t(x,p).\label{eq:hamsys-weak-2}
\end{align}
Remark~\ref{rem:smooth_w} allows us to take \( \psi_t(x,p) \doteq p \cdot w_t\left(X_{t}^{-1}(x)\right) \) in the above expression.
Recall that \( X_{t}(x) = P_{0,t}(x) \), where \( P \) is the flow of the noauthonomous vector field \( v_{t}(x) = V_{t}\left(x,\mu_{t},u(t)\right) \), in particular, \( X_{t}^{-1} = P_{t,0} \) and we can use the standard rules of flow differentiation (Theorem 2.3.3~\cite{BressanPiccoli2007}) to perform the calculations:
{
\begin{align*}
  \partial_{t}\psi_t(x,p)
  &= p \left[\partial_{t}w_t \left(P_{t,0}(x)\right) + D_x w_t\left(P_{t,0}(x)\right) \partial_t P_{t,0}(x)\right]\\
  &= p \left[\partial_{t}w_t\left(P_{t,0}(x)\right) - D_x w_t \left(P_{t,0}(x)\right) \, D_{x}P_{t,0}(x) \,  V_t\left(x, \mu_t\right)  \right]\\
  &= p \, \partial_{t}w_t\left(P_{t,0}(x)\right) - \nabla_{x}\psi_t(x,p) \cdot V_t\left(x, \mu_t\right),\\
  \nabla_x \psi_t(x,p)
   & = p \, D_x w_t\left(P_{t,0}(x)\right)  \, D_xP_{t,0}(x),\quad
  \nabla_p \psi_t(x,p)
  = w_t\left(P_{t,0}(x)\right).
\end{align*}
}
Then,
\begin{align*}
\iint \Xi_t \d \gamma_{t} = & \iint \left[p \, \partial_{t}w_t\left(X_{t}^{-1}(x)\right) - p \, D_x V_t\left(x, \mu_t\right) w_t\left(X_{t}^{-1}(x)\right)\right] \d \gamma_t(x, p)\\
- &\iint \Big[\iint  qD_\mu V_t\left(y, \mu_t,x\right)\d \gamma_t(y,q)\Big]  \, w_t\left(X_{t}^{-1}(x)\right)\d \gamma_t(x, p).
\end{align*}
In view of \eqref{eq:w}, the right-hand side
reduces to
\begin{align*}
\iint p \, W_t\left(x,\mu_{t}\right)\d \gamma_t(x, p)
  &+\iint p \Big[\int D_{\mu}V_t(x,\mu_{t},y)\, w_{t}\left(X_{t}^{-1}(y)\right)\d\mu_{t}(y)\Big]\d \gamma_t(x, p)\\
  &-  \iint \Big[\iint  q \, D_\mu V_t\left(y, \mu_t,x\right)\d \gamma_t(y,q)\Big]  \, w_t\left(X_{t}^{-1}(x)\right)\d \gamma_t(x, p).
\end{align*}
Renaming the variables $(x, p) \leftrightarrow (y, q)$ in the latter term shows that the last two terms cancel out.
Hence,
\[
\iint \Xi_t \d \gamma_{t} = \iint p \, W_t\left(x,\mu_{t}\right)\d \gamma_t(x, p).
\]
Finally, noticing that \( \psi_0 \doteq p \, w_0 \equiv 0 \) and
\[
  \lambda \iint\psi_T \d\gamma_{T} = \lambda \iint p \, w_T\left(X_{T}^{-1}(x)\right)\d\gamma_{T}(x,p) =
- \left(\mathcal{I}[u^{\lambda}] - \mathcal{I}[u] - \mathcal{O}(u,\bar u;\lambda^{2})\right),
\]
then using~\eqref{eq:hamsys-weak-1} and the definition of $W_t$, we have
{
\begin{proposition}
Under assumptions \( (\bm A_{1}) \), \( (\bm A_{2}) \) it holds
\begin{align}
  \mathcal{I}[u^\lambda] - \mathcal{I}[u]
  = -\lambda\int_0^T\left(H_{t}\left(\gamma_t, \bar u(t)\right) - H_{t}\left(\gamma_t, u(t)\right)\right)\d t+\mathcal{O}(u,\bar u;\lambda^{2}),
    \label{eq:incr}
\end{align}
where
\begin{equation}
  \label{eq:hamilt}
  H_{t}(\gamma, u) \doteq  \iint p \, V_{t}(x, \pi^1_\sharp \gamma,u)\d{\gamma(x,p)}
\end{equation}
and $t \mapsto \gamma_t$ is a solution of the Hamiltonian system~\eqref{eq:hamsys}--\eqref{eq:terminal}.
\end{proposition}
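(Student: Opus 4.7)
The approach is to start from the first increment formula \eqref{eq:increment_draft} and rewrite its linear term as an integral against the terminal state $\gamma_T$ of the Hamiltonian system, then transport that integral back to $t=0$ using the weak form of the Hamiltonian equation. Concretely, I would first apply the change of variables $x = X_T(y)$ to turn $\int D_{\mu}\ell(\mu_T,X_T(y))\,w_T(y)\,\d\vartheta(y)$ into $\int D_{\mu}\ell(\mu_T,x)\,w_T(X_T^{-1}(x))\,\d\mu_T(x)$, and then invoke the terminal condition \eqref{eq:terminal} to rewrite this quantity as $-\iint p\,w_T(X_T^{-1}(x))\,\d\gamma_T(x,p)$. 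This identifies the linear term of the increment with $-\lambda\iint \psi_T\,\d\gamma_T$ for the test function $\psi_t(x,p) \doteq p\cdot w_t(X_t^{-1}(x))$.

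Next, I would apply the weak identity \eqref{eq:hamsys-weak-1}--\eqref{eq:hamsys-weak-2} to $\psi_t$ along $\gamma_t$. This is legitimate because $w_t$ is $\C^1$ in $x$ by Remark~\ref{rem:smooth_w} (the sole use of assumption $(\bm A_2)$), while $X_t^{-1} = P_{t,0}$ is a $\C^2$ diffeomorphism by the flow regularity recalled in Section~\ref{subsec:vfields}. Since $w_0\equiv 0$, the initial trace $\psi_0$ vanishes, and the Newton--Leibniz identity reduces to $\iint \psi_T\,\d\gamma_T = \int_0^T \iint \Xi_t\,\d\gamma_t\,\d t$, which is exactly the object to be identified with the integrated Hamiltonian difference.

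The heart of the proof is to evaluate $\Xi_t$ explicitly. Using the flow identity $\partial_t P_{t,0}(x) = -D_x P_{t,0}(x)\,V_t(x,\mu_t)$, the computation of $\partial_t\psi_t$ produces a term $p\,\partial_t w_t(P_{t,0}(x))$ together with a contribution that exactly cancels $\nabla_x\psi_t\cdot V_t$ inside $\Xi_t$. Substituting the ODE \eqref{eq:w} for $\partial_t w_t$ then cancels the $pD_xV_t w_t$ contribution against the corresponding term in $\Xi_t$. What remains is the inhomogeneity $p\,W_t(x,\mu_t)$ plus two nonlocal cross-terms built from $D_\mu V_t$: one coming from the drift of $w$ and one from the adjoint drift inside $\Xi_t$. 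After renaming the dummy variables $(x,p)\leftrightarrow(y,q)$ in one of the two integrals, and using $\pi^1_\sharp\gamma_t=\mu_t$ from \eqref{eq:gamma-mu}, these two cross-terms are seen to be identical with opposite signs and therefore cancel, leaving $\iint\Xi_t\,\d\gamma_t = \iint p\,W_t(x,\mu_t)\,\d\gamma_t(x,p)$.

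To conclude, I would unfold the definition $W_t(x,\mu) = \sum_{j=1}^m V_t^j(x,\mu)(\bar u_j(t)-u_j(t))$ and compare with the Hamiltonian \eqref{eq:hamilt}, so that the remaining integrand is exactly $H_t(\gamma_t,\bar u(t)) - H_t(\gamma_t,u(t))$; integrating in $t$ and rolling the sign back into the identity $\lambda\iint\psi_T\,\d\gamma_T = -(\mathcal I[u^\lambda]-\mathcal I[u] - \mathcal O(u,\bar u;\lambda^2))$ yields \eqref{eq:incr}. The main obstacle is the bookkeeping of the two nonlocal $D_\mu V_t$ contributions: the $x$- and $y$-variables play asymmetric roles (one is the source of the push-forward for $w$, the other the argument of the adjoint drift of $\gamma$), and the cancellation is the reason why the author tailored the second component of $\vec H$ in \eqref{eq:DH} precisely as it is. Everything else is either a direct application of the Newton--Leibniz formula or the standard differentiation of the nonautonomous flow $P_{t,0}$.
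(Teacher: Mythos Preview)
Your proposal is correct and follows essentially the same route as the paper: rewrite the linear term of \eqref{eq:increment_draft} as $-\iint p\,w_T(X_T^{-1}(x))\,\d\gamma_T$, apply the Newton--Leibniz identity \eqref{eq:hamsys-weak-1}--\eqref{eq:hamsys-weak-2} with $\psi_t(x,p)=p\,w_t(X_t^{-1}(x))$, compute the derivatives via the flow rule $\partial_t P_{t,0}=-D_xP_{t,0}\,V_t$, and cancel the two $D_\mu V_t$ cross-terms by swapping $(x,p)\leftrightarrow(y,q)$ and using $\pi^1_\sharp\gamma_t=\mu_t$. The paper's argument is identical in structure and in the key cancellations.
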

}
\subsection{Pontryagin's maximum principle}

A consequence of the increment formula~\eqref{eq:incr} is the following version of Pontryagin's maximum principle.

\begin{theorem}[PMP in terms of Hamiltonian system]
  \label{thm:pmp}
  Assume that \((\bm A_{1,2})\) hold, and \( \vartheta\in \mathcal P_{c}(\mathbb{R}^{n}) \).
  Let $(\mu,u)$ be an optimal pair for $(P)$.
  Then \( u(t) \) satisfies, for a.e. $t\in I$, the maximum condition
    \begin{equation}
    H_{t}\left(\gamma_t, u(t)\right) = \max_{\upsilon \in U} H_{t}(\gamma_t, \upsilon),\label{eq:maximum-cond-limit}
    \end{equation}
    where \(\gamma\) is a unique solution of the Hamiltonian system \eqref{eq:hamsys}--\eqref{eq:terminal} and \( H_{t} \) is defined by~\eqref{eq:hamilt}.
\end{theorem}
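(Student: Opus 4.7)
The plan is to combine the increment formula \eqref{eq:incr} with the convexity of the admissible set $\mathcal{U}$ to derive a global integral inequality, and then to localize it in order to obtain the pointwise maximum condition.

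First, fix the optimal pair $(\mu,u)$ and take an arbitrary \emph{target} $\bar u\in\mathcal{U}$. Since $U$ is convex (Assumption $(\bm A_1)$), the weak variation $u^\lambda=u+\lambda(\bar u-u)$ belongs to $\mathcal{U}$ for every $\lambda\in[0,1]$. Optimality of $u$ gives $\mathcal{I}[u^\lambda]\geq\mathcal{I}[u]$, so \eqref{eq:incr} yields
\[
0\;\leq\;-\lambda\int_{0}^{T}\bigl(H_{t}(\gamma_t,\bar u(t))-H_{t}(\gamma_t,u(t))\bigr)\d t+\mathcal{O}(u,\bar u;\lambda^{2}).
\]
Dividing by $\lambda>0$ and letting $\lambda\to 0^{+}$ (the remainder, bounded by $C(u,\bar u)\lambda$, vanishes for the fixed pair), I obtain the \emph{global integral inequality}
\[
\int_{0}^{T}\bigl(H_{t}(\gamma_t,\bar u(t))-H_{t}(\gamma_t,u(t))\bigr)\d t\;\leq\;0 \qquad \forall\,\bar u\in\mathcal{U},
\]
where $\gamma_t$ is the fixed solution of \eqref{eq:hamsys}--\eqref{eq:terminal} associated with the reference pair $(\mu,u)$, and therefore does \emph{not} depend on $\bar u$.

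Next, to reach the pointwise statement, I would employ a standard localization argument. Fix any constant $\upsilon\in U$ and any measurable set $E\subset I$, and take $\bar u(t)\doteq \upsilon$ on $E$, $\bar u(t)\doteq u(t)$ on $I\setminus E$; clearly $\bar u\in\mathcal{U}$. The global inequality then reduces to
\[
\int_{E}\bigl(H_{t}(\gamma_t,\upsilon)-H_{t}(\gamma_t,u(t))\bigr)\d t\;\leq\;0.
\]
Since $E\subset I$ is arbitrary, this forces $H_{t}(\gamma_t,\upsilon)\leq H_{t}(\gamma_t,u(t))$ for a.e.\ $t$, with a null exceptional set $N_{\upsilon}$ depending on $\upsilon$. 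Choosing a countable dense subset $\{\upsilon_k\}_{k\in\mathbb{N}}\subset U$ (possible, as $U$ is compact metric, hence separable) and setting $N\doteq\bigcup_k N_{\upsilon_k}$, I obtain a single null set such that $H_{t}(\gamma_t,\upsilon_k)\leq H_{t}(\gamma_t,u(t))$ holds for every $k$ and every $t\in I\setminus N$. Under $(\bm A_1)$ the map $\upsilon\mapsto H_{t}(\gamma_t,\upsilon)$ is affine, hence continuous, so by density the inequality extends to every $\upsilon\in U$, which is precisely \eqref{eq:maximum-cond-limit}.

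The only delicate point I anticipate is the passage $\lambda\to 0^{+}$ in the first step: it hinges on the $\mathcal{O}(u,\bar u;\lambda^{2})$ remainder being controlled by a constant that depends only on the data, which is secured by Theorem~\ref{thm:dflow} (through the bounds $\rho,C_\rho,r,L_r$ chosen uniformly in $u,\bar u\in\mathcal U$ as explained right after \eqref{eq:increment_draft}). The remaining steps are essentially a textbook Lebesgue/density argument, considerably simplified here by the control-affine structure of $V$.
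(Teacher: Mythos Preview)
Your proof is correct. Both your argument and the paper's begin identically, deriving the global integral inequality
\[
\int_0^T H_t(\gamma_t,\bar u(t))\d t \;\leq\; \int_0^T H_t(\gamma_t,u(t))\d t\qquad\forall\,\bar u\in\mathcal{U}
\]
from the increment formula \eqref{eq:incr} and optimality. The difference lies in the passage from this global inequality to the pointwise maximum condition. The paper invokes Filippov's measurable selection lemma to produce a control $\tilde u\in\mathcal{U}$ with $H_t(\gamma_t,\tilde u(t))=\max_{\upsilon\in U}H_t(\gamma_t,\upsilon)$ for a.e.\ $t$, which turns the obvious inequality $\sup_{\bar u}\int_0^T H_t(\gamma_t,\bar u(t))\d t\leq\int_0^T\max_{\upsilon}H_t(\gamma_t,\upsilon)\d t$ into an equality and forces $u$ to realize the pointwise maximum. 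Your localization---patching $u$ with a constant $\upsilon$ on an arbitrary measurable set $E$, then exhausting $U$ via a countable dense subset and the continuity (here, affinity) of $\upsilon\mapsto H_t(\gamma_t,\upsilon)$---is more elementary and self-contained, avoiding any appeal to measurable selection theory. The paper's route is slightly more robust if one later drops the control-affine structure, since Filippov's lemma requires only that $(t,\upsilon)\mapsto H_t(\gamma_t,\upsilon)$ be Carath\'eodory, whereas your density step leans on continuity in $\upsilon$; in the present affine setting both approaches are equally clean.
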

\begin{proof}
  Since \( u \) is optimal, we have \( \mathcal{I}[u^{\lambda}] - \mathcal{I}[u] \geq 0 \) for any target control \( \bar u \).
  Now, the increment formula implies that
  \begin{displaymath}
    \sup_{\bar u \in \mathcal{U}}\int_0^TH_{t}\left(\gamma_t, \bar u(t)\right)\d t = \int_{0}^{T} H_{t}\left(\gamma_t, u(t)\right)\d t.
  \end{displaymath}
  On the other hand,
  \begin{equation}
    \label{eq:HH}
    \sup_{\bar u \in \mathcal{U}}\int_0^TH_{t}\left(\gamma_t, \bar u(t)\right)\d t
    \leq \int_{0}^{T}\max_{\upsilon\in U}H_{t}\left(\gamma_t, \upsilon\right)\d t.
  \end{equation}
  Let \( \psi(t,\upsilon) \doteq H_{t}(\gamma_{t},\upsilon) \) and \( \alpha(t) \doteq \max_{\upsilon\in U}\psi(t,\upsilon) \).
  It is easy to check that \( \psi \) is a Carath\'eodory map.
  Since \( \alpha(t)\in \psi(t,U) \) for a.e. \( t\in I \), we deduce from {Filippov's lemma~\cite[Theorem 8.2.10]{aubinSetvaluedAnalysis2009}} that there exists \( \tilde u\in \mathcal{U} \) satisfying \( \alpha(t) = \psi(t,\tilde u(t)) \) for a.e. \( t\in I \).
  Hence, the inequality in~\eqref{eq:HH} can be replaced by the equality, which completes the proof.
\end{proof}
{
\begin{remark}\label{rem:comparison}
Pontryagin's maximum principle displayed by Theorem~\ref{thm:pmp} is essentially 
the same as in \cite{BonnetFrankowska2021a,Bonnet-Rossi-2021}. However, in these papers, the driving vector field has a specific form: It can be represented as the sum of a nonlocal drift term and an external Lipschitz vector field $u=u(t,x)$ playing the role of control action. In our case, the control is a measurable function of time variable only $u=u(t)$, which may enter in the {non-local term} itself, thus enabling us, e.g., to govern convolution kernels as in \eqref{VF-conv}. Finally, note that Theorem~\ref{thm:pmp} can be derived from the (most general) version of PMP recently obtained in  \cite{averboukh2022pontryagin}, which relies on the so-called Lagrangian interpretation \cite{CAVAGNARI2022268} of the mean-field control problem $(P)$. 
\end{remark}
}
\begin{remark}\label{rem:num-pitfall}
We conclude this section by stressing two obvious drawbacks of the presented form of the necessary optimality condition, which are critical for its numerical implementation.
\begin{enumerate}
    \item Equation \eqref{eq:hamsys} is defined on the space of dimension $2n$, which makes its numerical solution computationally demanding even for $n=2$.

  \item Even if $\mu$ is absolutely continuous, $\gamma$ is not. In other words, $\gamma$ never takes the form $\varrho_t \, \mathcal L^{2n}$ with a density function $(t,x,p) \mapsto \varrho_t(x,p)$. This is due to the fact that $\gamma_T$ is supported on the graph of the map $x \mapsto -D_{\mu}\ell (\mu_T)(x)$, which is always $\mathcal L^{2n}$-null set. This means that system \eqref{eq:hamsys} can not be solved by standard numerical methods for hyperbolic PDEs, which can be used only when densities exist.
\end{enumerate}
These issues motivate the development of a new version of Theorem~\ref{thm:pmp}, which is obtained by extracting the ``adjoint system'' from the Hamiltonian PDE~\eqref{eq:hamsys}. 
\end{remark}

\section{Adjoint equation}
\label{sec:adjoint}

It this section, we shall see that
the Hamiltonian system~\eqref{eq:hamsys} can be decoupled into the primal and dual parts just as one is used to experience in the classical optimal control theory. This fact will allow us to rewrite the increment formula and Pontryagin's maximum principle in an equivalent form, suitable for numerics. 

\subsection{Derivation}

After reflecting upon the formula  \eqref{eq:incr}, one comes up with an idea to take, as a matter of adjoint trajectory, the family of signed vector (namely, row vector) measures defined by
\begin{equation}
  \label{eq:nu}
  \langle \nu_{t},\phi\rangle \doteq \int p\,\phi(x) \d\gamma_{t}(x,p), \quad \phi \in \C^1(\R^n;\R^{n}),
\end{equation}
where \( \gamma_{t} \) is the solution of~\eqref{eq:hamsys}--\eqref{eq:terminal}.
Indeed, return to representation~\eqref{eq:hamsys-weak-1}, \eqref{eq:hamsys-weak-2} and specify the class of test functions $\psi$ as follows:
\[
\psi_t(x, p)= p\,\phi_t(x), \quad  \phi\in \C^1_{c}\left((0,T)\times\R^{n};\R^{n}\right).
\]
In this case, the left-hand side of \eqref{eq:hamsys-weak-1} vanishes, which implies
\begin{equation}
\int_{0}^{T}\Big(\iint \Xi_t \d \gamma_{t}\Big)\d t = 0,\label{h-w}
\end{equation}
where  $\Xi$ is defined in \eqref{eq:hamsys-weak-2}. 
In terms of $\nu$, the parts of the integral in the left hand side of \eqref{h-w} can be represented as follows:
\begin{align*}
 \iint \partial_{t}\psi_t(x,p) \d\gamma_{t}(x,p) &= \iint p\,\partial_{t} \phi_t(x)\d \gamma_{t}(x,p) =
  \int \partial_{t}\phi_t(x)\cdot\d\nu_{t}(x),\\
  \iint \nabla_{x}\psi_t(x,p) \, V_t(x, \mu_t)\d \gamma_{t}(x,p)&=
  \iint p \,D_x\phi_t(x) \, V_t(x, \mu_t)\d \gamma_{t}(x,p)\\
  &=\int D_x\phi_t(x) \, V_t(x, \mu_t)\cdot  \d\nu_{t}(x),\\
  \iint p\,D_{x}V_{t}(x,\mu_{t})\,\nabla_{p}\psi_t(x,p)\d\gamma_{t}(x,p) & = \int D_{x}V_{t}(x,\mu_{t})\, \phi_t(x)\cdot \d \nu_{t}(x),
\end{align*}
and, according to~\eqref{eq:gamma-mu},
\begin{align*}
  &\iint\bigg(\iint  q D_\mu V_t\left(y, \mu_t,x\right)\d \gamma_t(y,q)\bigg)\nabla_p\psi_t(x,p)\d \gamma_t(x,p)\\
  &= \iint\bigg(\int D_\mu V_t\left(y, \mu_t,x\right)\phi_t(x)\cdot \d \nu_t(y)\bigg)\d \gamma_t(x,p)\\
  &=
  \int\bigg(\int D_\mu V_t\left(y, \mu_t,x\right)\phi_{t}(x)\cdot\d \nu_t(y)\bigg)\d \mu_t(x).
\end{align*}
Substituting these expressions into \eqref{h-w}, we obtain
\begin{align}
0 & \equiv \int_0^T \Big[\int\left(\partial_{t}\phi_t(x) + D_x\phi_t(x) \, V_t(x, \mu_t)\right)\cdot \d \nu_t(x)\Big] \d t\,\nonumber\\
  &-\int_0^T \!\int  D_x V_t\left(x, \mu_t\right)\,\phi_{t}(x)\cdot \d \nu_t(x)\d t\notag\\ 
  &- \int_0^T\int\!\Big(\!\int  D_\mu V_t\left(y, \mu_t,x\right)\phi_{t}(x)\cdot \d \nu_t(y)\Big)\d \mu_t(x)\d t.\label{eq:adjoint-weak}
\end{align}
The choice \( \phi(x) = (0,\ldots,\phi^{i}(x),\ldots 0)^{{T}} \), where only \( i \)-th component of \( \phi \) is nonzero, shows that this is merely the weak formulation of the following system of balance laws:
\begin{align}
  \label{eq:adjoint}
\partial_{t}\nu_{i} + \div_x(v \, \nu_i) &= \sum_{j} \left[\Big(\int m^{j}_{i} \d \nu_j\Big)\mu - \partial_{x_{i}}v^{j} \, \nu_{j}\right], \quad 1\leq i\leq n.
\end{align}
Here, for the sake of readability, we omit the lower index \( t \) of $\nu_t$ and abbreviate
\begin{displaymath}
 v_{t}(x)\doteq V_{t}\left(x,\mu_{t},u(t)\right),\qquad
 \int m^{j}_{i}\d\nu_{j} \doteq \int m^{j}_{i}(t,x,y)\d(\nu_{j})_{t}(y),
\end{displaymath}
where \( m^{j}_{i} = m^{j}_{i}(t,x,y) \) are elements of the matrix \( D_{\mu}V_{t}\left(y,\mu_{t},u(t),x\right) \).

At the final time instant $T$, one has
\[
  \langle \nu_{T},\phi\rangle = \iint p\,\phi(x)\d\left[(\id,-D_\mu\ell(\mu_T))_{\sharp}\mu_{T}\right](x,p) = -\int D_\mu\ell(\mu_T)(x)\,\phi(x)\d\mu_{T}(x),
\]
which can be rewritten in terms of the Radon-Nikodym derivative as
\begin{equation}
  \label{eq:adj_term}
  \frac{d\nu_{T}}{d\mu_{T}} = -D_{\mu}\ell(\mu_{T}).
\end{equation}

\begin{definition}
  \label{def:adjoint}
  We call the backward system~\eqref{eq:adjoint},~\eqref{eq:adj_term} of nonlocal linear PDEs the \emph{adjoint system} associated to the optimal control problem \( (P) \).
\end{definition}

\subsection{Well-posedness}

We observe that there exists a solution of the adjoint system, namely, the one defined by~\eqref{eq:nu}. Let us show that this solution is unique. Basically, the adjoint system \eqref{eq:adjoint} is a system of linear balance laws 
with sources of the form
\begin{equation}
  \label{eq:source}
\varsigma_{i} \doteq   \sum_{j=1}^{n} \Big[\Big(\int m^{j}_{i}\d\nu_{j}\Big) \mu-\partial_{x_{i}}v^{j}\nu_{j} \Big].
\end{equation}

To proceed, recall basic properties~\cite{PogStar2022} of the linear balance law
\begin{equation}
\label{eq:linear}
\partial_t \varrho_t + \div_{x}(f_t\varrho_t) = \varsigma_t
\end{equation}
with a Carath\'eodory, locally Lipschitz, sublinear vector field \( f_{t} \) and an integrable source \( \varsigma_{t} \).

\begin{definition}
  \label{def:integrable}
A curve $\varsigma\colon I\to \mathcal{M}(\R^n)$ is called \emph{integrable} if for any Borel set \( A\subset \R^{d} \) the map \(t\mapsto\varsigma_{t}(A)\) is measurable and \( \int_{0}^{T}\|\varsigma_{t}\|_{TV}\,d t < +\infty \), where \( \|\cdot\|_{TV} \) denotes the total variation norm on \( \mathcal{M}(\R^{n}) \).
\end{definition}

For integrable curves we can define a notion of \emph{integral} in the usual way:
$\displaystyle\left(\int_{0}^{t}\varsigma_{s}\d s\right)(A) \doteq  \int_{0}^{t}\varsigma_{s}(A)\d s$,
for all Borel sets \( A\subset \R^{n} \).

\begin{definition}\label{def-sol}
 A curve \( \varrho\in C\big(I;\mathcal M(\R^{n})\big) \) is called a solution of~\eqref{eq:linear} if and only if, for any test function \( \varphi\in C_{c}^{\infty}(\R^{n})\) and a.e. \( t\in I \), one has
\begin{displaymath}
 \frac{d}{dt}\int\varphi\,d \varrho_{t} = \int \nabla\varphi \cdot f_{t}\,d \varrho_{t} + \int \varphi\,d \varsigma_{t}.
\end{displaymath}
\end{definition}

\begin{theorem}
  \label{thm:balance}
  Under our assumptions, there exists a unique solution of~\eqref{eq:linear} with the initial condition $\varrho_0=\xi$. Moreover, it can be expressed by 
    \begin{equation}
    \label{eq:linsol}
    \varrho_t = P_{0,t\sharp}\xi + \int_0^t P_{s,t\sharp}\varsigma_s\,d s,\quad t\in I,
    \end{equation}
  where \( P \) is the flow of \( f_{t} \).
\end{theorem}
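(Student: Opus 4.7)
The plan is to establish existence by Duhamel's principle (i.e.\ by direct verification that the proposed formula defines a solution) and uniqueness by duality with the backward transport equation along the flow $P$ of $f_t$.

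For existence, I would first treat the two summands separately. The term $P_{0,t\sharp}\xi$ satisfies the homogeneous continuity equation $\partial_t\varrho_t+\div_x(f_t\varrho_t)=0$ with $\varrho_0=\xi$ by the classical theory of flows of sublinear Carathéodory vector fields (for signed measures this follows from the Hahn decomposition applied to the two pushforwards of $\xi^+$ and $\xi^-$). So it remains to show that the curve $\eta_t\doteq \int_0^t P_{s,t\sharp}\varsigma_s\,ds$ is a solution of~\eqref{eq:linear} with zero initial datum. Fix $\varphi\in C_c^\infty(\R^n)$ and use the change-of-variables formula to write
\[
 \int \varphi\,d\eta_t = \int_0^t \int \varphi\circ P_{s,t}\,d\varsigma_s\,ds.
\]
Differentiating in $t$ via the Leibniz rule and the identity $\partial_t(\varphi\circ P_{s,t})(x) = \nabla\varphi(P_{s,t}(x))\cdot f_t(P_{s,t}(x))$ produces the boundary term $\int\varphi\,d\varsigma_t$ and the interior term $\int_0^t\!\int \nabla\varphi\cdot f_t\,d(P_{s,t\sharp}\varsigma_s)\,ds = \int\nabla\varphi\cdot f_t\,d\eta_t$, which matches Definition~\ref{def-sol}. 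Continuity $t\mapsto\varrho_t$ in the total variation norm follows from continuity of the flow maps and from $\int_0^T\|\varsigma_s\|_{TV}\,ds<\infty$ (integrability of the source).

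For uniqueness, by linearity it suffices to show that any solution $\varrho^0$ of the homogeneous equation with $\varrho^0_0=0$ vanishes identically. Fix $\tau\in I$ and $\psi\in C_c^\infty(\R^n)$, and define the backward-transported test function $\varphi_t(x)\doteq \psi(P_{t,\tau}(x))$. Standard rules of flow differentiation (Theorem 2.3.3 in~\cite{BressanPiccoli2007}) give $\partial_t\varphi_t(x) + \nabla\varphi_t(x)\cdot f_t(x) = 0$ for all $(t,x)\in I\times\R^n$, and the local boundedness of $f$ ensures that $\varphi_t$ has uniformly compact support on $[0,\tau]$. Applying the weak formulation to $\varphi_t$ (with the standard mollification/approximation in $t$ to handle the time-dependence of the test function) yields that $t\mapsto \int\varphi_t\,d\varrho^0_t$ is constant. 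Evaluating at $t=0$ gives $0$, whereas at $t=\tau$ it equals $\int\psi\,d\varrho^0_\tau$. Since $\psi$ is arbitrary, $\varrho^0_\tau=0$.

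The main obstacle I anticipate is the justification of the time-derivative computation in the uniqueness step, because Definition~\ref{def-sol} formally only admits time-independent test functions. The standard workaround is to substitute into the weak formulation a product test function $\chi_\varepsilon(t)\varphi_t(x)$, where $\chi_\varepsilon$ is a smooth approximation of $\mathbf 1_{[0,\tau]}$, and then pass to the limit $\varepsilon\to 0$ using the uniform compact support of $\varphi_t$ and the integrability of $s\mapsto\|\varsigma_s\|_{TV}$. A related technical point is checking that every integral is well defined for signed measures; but the sublinearity of $f$, the local bounds on $\nabla\varphi$, and the integrability assumption on $\varsigma$ make all the interchanges of integration (Fubini) and differentiation legitimate.
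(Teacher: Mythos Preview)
The paper does not actually supply a proof of Theorem~\ref{thm:balance}: it is quoted as a known result from~\cite{PogStar2022} (see the sentence ``recall basic properties~\cite{PogStar2022} of the linear balance law'' preceding the statement). So there is no in-paper argument to compare against.

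Your outline is the standard one and is essentially correct. Existence via Duhamel --- checking that $P_{0,t\sharp}\xi$ solves the homogeneous equation and that $\eta_t=\int_0^t P_{s,t\sharp}\varsigma_s\,ds$ solves the inhomogeneous equation with zero datum by differentiating $\int\varphi\circ P_{s,t}\,d\varsigma_s$ --- is exactly how such representation formulas are verified. Uniqueness by testing against the backward-transported function $\varphi_t=\psi\circ P_{t,\tau}$ is likewise the canonical route, and your remark about upgrading Definition~\ref{def-sol} to time-dependent test functions via a cutoff $\chi_\varepsilon(t)$ is the right fix.

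One small slip: you claim continuity of $t\mapsto\varrho_t$ in the \emph{total variation} norm. This is false in general (take $\xi=\delta_{x_0}$: then $P_{0,t\sharp}\xi=\delta_{P_{0,t}(x_0)}$, and two distinct Diracs are at TV-distance $2$). What one actually gets --- and what $C\big(I;\mathcal M(\R^n)\big)$ means here --- is continuity in the weak/narrow topology, which follows from continuity of $t\mapsto P_{s,t}(x)$ together with dominated convergence and the integrability bound $\int_0^T\|\varsigma_s\|_{TV}\,ds<\infty$. With that correction the argument goes through.
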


The following Lemma collects several well-known properties of the total variation norm (since their proof is quite standard, we drop them for brevity). 
\begin{lemma}
  \label{lem:tv}
  Let \( \varrho\in \mathcal{M}(\R^{n}) \) and \( \varsigma\colon I\to \mathcal{M}(\R^{n}) \) be an integrable curve.
  Then,
  \begin{enumerate}
    \item for any Borel measurable bijective map \( f\colon\R^{n}\to\R^{n} \),
    \begin{displaymath}
      \|f_{\sharp}\varrho\|_{TV} = \|\varrho\|_{TV};
    \end{displaymath}
    \item for all \( t\in I \),
    \begin{displaymath}
    \Big\| \int_{0}^{t}\varsigma_{s}\d s \Big\|_{TV}\leq \int_{0}^{t}\|\varsigma_{s}\|_{TV}\d s;
    \end{displaymath}
    \item if \( \spt \varrho \) is contained in a compact set \( \Omega \), then for any \( \phi \in \C^{0}(\R^{n}) \)
    \begin{displaymath}
    \|\phi\varrho\|_{TV} \leq \|\phi\|_{\C^{0}(\Omega)}\,\|\varrho\|_{TV};
    \end{displaymath}
    \item if \( \spt \varrho \) is contained in a compact set \( \Omega \), then for any \( K \in \C^{0}(\R^{2n}) \)
          \begin{displaymath}
          \left\vert\int K(x,y)\d\varrho(y)\right\vert \leq \|K\|_{\C^{0}(\Omega^{2})}\,\|\varrho\|_{TV}\quad \forall x\in \Omega.
          \end{displaymath}
  \end{enumerate}
\end{lemma}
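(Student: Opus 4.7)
The plan is to prove each of the four claims by reducing everything to the Jordan (and Hahn) decomposition of a signed measure, so that all bounds follow from the corresponding bounds on the total variation measure $|\varrho|$. Throughout, I would write $\varrho = \varrho^{+} - \varrho^{-}$ with $\varrho^{\pm}$ mutually singular, and recall that $\|\varrho\|_{TV} = |\varrho|(\R^{n}) = \varrho^{+}(\R^{n}) + \varrho^{-}(\R^{n})$, together with the dual characterisation $\|\varrho\|_{TV} = \sup\{\sum_{i}|\varrho(A_{i})|\,:\,\{A_{i}\}\text{ Borel partition of }\R^{n}\}$.

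For item 1, the key observation is that if $f$ is a Borel measurable bijection, then the images of disjoint Borel sets under $f$ are disjoint, so the pushforwards $f_{\sharp}\varrho^{+}$ and $f_{\sharp}\varrho^{-}$ remain mutually singular; hence $f_{\sharp}\varrho = f_{\sharp}\varrho^{+} - f_{\sharp}\varrho^{-}$ is itself the Jordan decomposition of $f_{\sharp}\varrho$, and $|f_{\sharp}\varrho|(\R^{n}) = f_{\sharp}|\varrho|(\R^{n}) = |\varrho|(f^{-1}(\R^{n})) = \|\varrho\|_{TV}$. Alternatively, one can work directly with the partition characterisation, since $\{A_{i}\}\mapsto\{f^{-1}(A_{i})\}$ is a bijection on Borel partitions.

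For item 2, by Definition~\ref{def:integrable} the map $t\mapsto\varsigma_{t}(A)$ is measurable for every Borel $A$, so $\bigl(\int_{0}^{t}\varsigma_{s}\d s\bigr)(A) = \int_{0}^{t}\varsigma_{s}(A)\d s$. For any Borel partition $\{A_{i}\}$,
\begin{displaymath}
\sum_{i}\Bigl\lvert\int_{0}^{t}\varsigma_{s}(A_{i})\d s\Bigr\rvert \leq \sum_{i}\int_{0}^{t}\lvert\varsigma_{s}\rvert(A_{i})\d s = \int_{0}^{t}\sum_{i}\lvert\varsigma_{s}\rvert(A_{i})\d s \leq \int_{0}^{t}\|\varsigma_{s}\|_{TV}\d s,
\end{displaymath}
where the swap of sum and integral is the monotone convergence / Tonelli step. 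Taking the supremum over partitions on the left gives the claim.

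For items 3 and 4, I would use the standard fact that, for a continuous $\phi$, the signed measure $\phi\varrho$ has total variation $|\phi|\cdot|\varrho|$. Since $\spt\varrho\subset\Omega$, one has $\|\phi\varrho\|_{TV} = \int_{\Omega}|\phi|\d|\varrho| \leq \|\phi\|_{\C^{0}(\Omega)}\,|\varrho|(\Omega) = \|\phi\|_{\C^{0}(\Omega)}\,\|\varrho\|_{TV}$, proving 3. For item 4, fix $x\in\Omega$ and apply the same estimate to the continuous function $y\mapsto K(x,y)$ integrated against $\varrho$, bounding $|K(x,\cdot)|$ on $\spt\varrho\subset\Omega$ by $\|K\|_{\C^{0}(\Omega^{2})}$. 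I do not foresee a genuine obstacle: the only mildly delicate point is verifying that the Jordan decomposition is preserved by Borel bijections in item 1, and measurability of the partial sums in item 2, both of which are standard measure-theoretic facts.
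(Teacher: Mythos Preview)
Your proposal is correct and entirely adequate. The paper does not actually prove this lemma: it states that the four properties are well known and explicitly drops the proof ``for brevity'', so there is no argument in the paper to compare yours against; the Jordan/Hahn decomposition and partition-supremum approach you outline is precisely the standard route the authors are alluding to. One cosmetic point in item~2: to keep the Tonelli step clean it is slightly safer to bound by $\int_{0}^{t}\lvert\varsigma_{s}(A_{i})\rvert\,ds$ (whose integrand is measurable directly from Definition~\ref{def:integrable}) rather than $\int_{0}^{t}\lvert\varsigma_{s}\rvert(A_{i})\,ds$, since measurability of $s\mapsto\lvert\varsigma_{s}\rvert(A_{i})$ is not part of the stated hypotheses; the final estimate $\sum_{i}\lvert\varsigma_{s}(A_{i})\rvert\le\|\varsigma_{s}\|_{TV}$ is then immediate from the partition characterisation.
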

The well-posedness of the adjoint system is established by the following result, where $\mathcal{M}_{c}(\R^{n})$ denotes the subset of $\mathcal{M}(\R^{n})$ composed of signed measures with compact support.
\begin{proposition}
  \label{prop:unique}
  Under assumptions \((\bm A_{1,2})\), the adjoint system~\eqref{eq:adjoint} with the terminal condition \( \nu_{T} = \xi \), \( \xi\in \left[\mathcal{M}_{c}(\R^{n})\right]^n\), 
  has a unique solution.  
\end{proposition}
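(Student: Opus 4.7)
The plan is to reformulate the problem as a forward-in-time fixed-point equation via time reversal and apply the Banach contraction principle, invoking Theorem~\ref{thm:balance} to invert the linear transport component on each iteration. First, set $\tilde{\nu}_t \doteq \nu_{T-t}$, $\tilde{\mu}_t \doteq \mu_{T-t}$, $\tilde{v}_t \doteq -v_{T-t}$, which turns the backward adjoint system~\eqref{eq:adjoint}--\eqref{eq:adj_term} into the forward Cauchy problem
\[
\partial_t \tilde{\nu}_i + \div_x(\tilde{v}\,\tilde{\nu}_i) = \tilde{\varsigma}_i[\tilde{\nu}],\quad \tilde{\nu}_0 = \xi,
\]
where $\tilde{\varsigma}_i[\tilde{\nu}]$ is the time-reversed counterpart of~\eqref{eq:source}. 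Assumptions $(\bm A_{1,2})$ guarantee that $\tilde{v}$ is a sublinear, locally Lipschitz Carath\'eodory vector field and that the maps $(t,x,y)\mapsto m^{j}_{i}(t,x,y)$ and $(t,x)\mapsto \partial_{x_{i}}\tilde{v}^{j}(t,x)$ are continuous and bounded on compact subsets of their domains.

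Next, I would fix a compact set $\Omega^{\star}\subset \R^{n}$ containing $\spt\xi$, all supports $\spt\tilde{\mu}_t$ (uniformly bounded since $\vartheta\in\P_{c}$ and $\tilde{v}$ is sublinear), and their images under the flow $P$ of $\tilde{v}$ over $I\times I$. By Lemma~\ref{lem:tv}(1) and the representation formula~\eqref{eq:linsol}, the solution of any linear balance law with source and initial datum supported in $\Omega^{\star}$ stays supported in $\Omega^{\star}$ throughout $I$. On the complete metric space
\[
\mathcal{Y} \doteq \Big\{\tilde{\nu}\in C\bigl(I;[\mathcal{M}(\R^{n})]^{n}\bigr)\;\colon\;\spt\tilde{\nu}_{i,t}\subset \Omega^{\star}\ \forall i,t\Big\},
\]
endowed with the weighted norm $\|\tilde{\nu}\|_{\sigma}\doteq \sup_{t\in I}e^{-\sigma t}\sum_{i}\|\tilde{\nu}_{i,t}\|_{TV}$, I would define $\mathcal{T}\colon \mathcal{Y}\to\mathcal{Y}$ as the operator sending $\tilde{\nu}$ to the unique solution (by Theorem~\ref{thm:balance}) of $\partial_t\varrho_i + \div_x(\tilde{v}\varrho_i) = \tilde{\varsigma}_i[\tilde{\nu}]$ with $\varrho_0 = \xi$.

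The contraction estimate then proceeds as follows. Using Lemma~\ref{lem:tv}(3)(4) together with the uniform bounds on $m^{j}_{i}$ and $\partial_{x_{i}}\tilde{v}^{j}$ over $\Omega^{\star}$ and the unit-mass bound on $\tilde\mu$, one obtains
\[
\|\tilde{\varsigma}_i[\tilde{\nu}]_t - \tilde{\varsigma}_i[\tilde{\nu}']_t\|_{TV} \leq C\sum_{j}\|\tilde{\nu}_{j,t} - \tilde{\nu}'_{j,t}\|_{TV},
\]
with $C$ depending only on $\Omega^{\star}$ and on the constants in $(\bm A_{1,2})$. Combining with Lemma~\ref{lem:tv}(1)(2) applied to the difference of the representation formulas yields $\|\mathcal{T}(\tilde{\nu})_t - \mathcal{T}(\tilde{\nu}')_t\|_{TV} \leq C\int_0^t\|\tilde{\nu}_s - \tilde{\nu}'_s\|_{TV}\,ds$, which, after multiplying by $e^{-\sigma t}$ and interchanging the order of integration, gives $\|\mathcal{T}\tilde{\nu} - \mathcal{T}\tilde{\nu}'\|_{\sigma}\leq (C/\sigma)\|\tilde{\nu} - \tilde{\nu}'\|_{\sigma}$. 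For $\sigma > C$ the map $\mathcal{T}$ is contractive, and Banach's theorem provides a unique fixed point, necessarily coinciding with the curve defined through~\eqref{eq:nu} whose existence was already recorded.

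I expect the main obstacle to be the careful verification that $\mathcal{T}$ genuinely maps $\mathcal{Y}$ into itself. One must show that $\tilde{\varsigma}[\tilde{\nu}]$ is an integrable curve in the sense of Definition~\ref{def:integrable}---which demands Borel measurability in $t$ of the nonlocal coupling $x\mapsto \int m^{j}_{i}(t,x,y)\,d\tilde{\nu}_{j,t}(y)$ and a uniform total-variation bound---and that the resulting $\mathcal{T}(\tilde{\nu})$ is continuous in TV with support inside $\Omega^{\star}$. These properties follow from the Carath\'eodory structure and local boundedness built into $(\bm A_{1,2})$, the compactness of $\Omega^{\star}$, and Lemma~\ref{lem:tv}(3)(4) via dominated convergence, but they need to be tracked diligently to ensure the iteration scheme is well posed.
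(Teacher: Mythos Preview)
Your proposal is correct and rests on the same core estimate as the paper's proof---namely, the TV bound on the difference of sources obtained from Lemma~\ref{lem:tv}(3)(4) and the local boundedness of $\partial_{x_i}v^j$ and $m^j_i$ over a compact set containing all supports. The paper, however, does not set up a contraction: it simply takes two putative solutions $\nu,\nu'$ with terminal data $\xi,\xi'$, applies the representation formula of Theorem~\ref{thm:balance} componentwise to the difference, bounds $\|\varsigma_i-\varsigma_i'\|_{TV}$ exactly as you do, and closes with Gr\"onwall's inequality on $r(t)=\sum_i\|(\nu_i-\nu_i')_t\|_{TV}$. Existence is not re-proved there; it is taken from the explicit construction~\eqref{eq:nu} via the Hamiltonian system. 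Your fixed-point packaging with the $\sigma$-weighted norm (mirroring the device of Lemma~\ref{lem:contraction}) is slightly heavier but buys existence for an arbitrary compactly supported terminal datum $\xi$ in one stroke, which matches the proposition as stated more directly. The technical obstacle you flag---checking that $\mathcal T$ preserves $\mathcal Y$---is real but routine: the source supports lie in $\spt\tilde\mu_t\cup\spt\tilde\nu_t\subset\Omega^\star$, and flow-invariance of $\Omega^\star$ plus the representation formula confines $\mathcal T(\tilde\nu)$ there as well.
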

\begin{proof}
Take two terminal measures \( \xi, \xi' \in \left[\mathcal{M}_{c}(\R^{n})\right]^n \) and denote by \( \nu_{t} \) and \( \nu_{t}' \) the corresponding (potentially, non-unique) trajectories of~\eqref{eq:adjoint}.
Then, from Theorem~\ref{thm:balance} and Lemma~\ref{lem:tv}, it follows that
\begin{displaymath}
  \left\|(\nu_{i}-\nu_{i}')_t\right\|_{TV}\leq
  \left\|\xi_{i}-\xi_{i}'\right\|_{TV} +
  \int_0^t \left\|(\varsigma_i-\varsigma_{i}')_{s}\right\|_{TV}\,d s,
\end{displaymath}
where \( \varsigma \) and \( \varsigma' \) are the corresponding sources defined by~\eqref{eq:source}.
Since
\begin{displaymath}
  \varsigma_i-\varsigma_{i}' = \sum_{j=1}^{n} \partial_{x_{i}}v^{j}(\nu_{j} - \nu_{j}') - \sum_{j=1}^{n}\Big(\int m^{j}_{i}\d (\nu_{j} - \nu_{j}')\Big) \mu,
\end{displaymath}
we obtain, again by Lemma~\ref{lem:tv},
\begin{displaymath}
  \left\|\varsigma_i-\varsigma_{i}'\right\|_{TV} \leq C^{1}_{\Omega}\sum_{j=1}^{n} \left\|\nu_{j} - \nu_{j}'\right\|_{TV} + C^{2}_{\Omega}\|\mu\|_{TV} \sum_{j=1}^{n}\left\|\nu_{j} - \nu_{j}'\right\|_{TV},
\end{displaymath}
where \( \Omega \) is a compact set containing the supports of the measures \( \nu_{t} \), \( \nu_{t}' \), \( \mu_{t} \), \( t\in I \) (one can show that there is such a set by reasoning as in~\cite{PogStar2022}), \( C^{1}_{\Omega} \) is an upper bound of \( \sum_{i,j}\vert\partial_{x_{i}}v^{j}\vert \) on \( I\times \Omega \) and \( C^{2}_{\Omega} \) is an upper bound of \( \sum_{i,j}\vert m^{j}_{i}\vert \) on \( I\times\Omega\times\Omega \).

By letting \( r(t) = \displaystyle\sum_{i=1}^{n}\|( \nu_{i}-\nu_{i}')_{t}\|_{TV} \), we obtain
\begin{displaymath}
  r(t) \leq \sum_{i=1}^{n}\left\|\xi_{i}-\xi_{i}'\right\|_{TV} + n\int_{0}^{t}\left(C^{1}_{\Omega} + C^{2}_{\Omega}\|\mu\|_{TV}\right)r(s)\d s.
\end{displaymath}
Now, Gr\"onwall's lemma gives the uniqueness.
\end{proof}

\subsection{Increment formula III}

The increment formula~\eqref{eq:incr} and Pontryagin's maximum principle (Theorem~\ref{thm:pmp}) are trivially reformulated in terms of a solution to the adjoint system.

\begin{theorem}[Increment formula]
  \label{thm:increment}
  Assume that \((\bm A_{1,2})\) hold, and \( \vartheta\in \mathcal P_{c}(\mathbb{R}^{n}) \).
  Let \( u,\bar u\in \mathcal{U} \) and \( u^{\lambda} = u+\lambda(\bar u - u) \), \( \lambda\in [0,1] \), be the weak variation of \( u \).
  Then,
  \begin{equation}
  \mathcal{I}[u^\lambda] - \mathcal{I}[u] = -\lambda\int_0^T\left(\mathbf H_{t}\left(\mu_t,\nu_{t}, \bar u(t)\right) - \mathbf H_{t}\left(\mu_{t}, \nu_t, u(t)\right)\right)\d t + \mathcal{O}(u,\bar u;\lambda^{2}),\label{eq:incr2}
  \end{equation}
  where
  \begin{equation}
    \label{eq:hamilt2}
        \mathbf H_{t}(\mu, \nu, u) \doteq  \int V_{t}(x, \mu, u)\cdot \d\nu(x).
  \end{equation}
\end{theorem}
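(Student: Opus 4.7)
The proof is essentially a direct substitution that identifies the new Hamiltonian $\mathbf{H}_t(\mu_t, \nu_t, u)$ with the old one $H_t(\gamma_t, u)$ along the reference trajectory. The plan is to start from the already proved increment formula~\eqref{eq:incr}, in which the Hamiltonian $H_t(\gamma_t, u) = \iint p\, V_t(x, \pi^1_\sharp \gamma_t, u)\,d\gamma_t(x,p)$ is defined on the cotangent bundle, and then rewrite the integral by means of the row-vector-valued measure $\nu_t$ introduced in~\eqref{eq:nu}.

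More concretely, the first step is to invoke the compatibility relation~\eqref{eq:gamma-mu}, namely $\pi^1_\sharp \gamma_t = \mu_t$, to replace $\pi^1_\sharp \gamma_t$ by $\mu_t$ inside the vector field. Next, I fix $t \in I$ and a control value $u \in U$, and regard $\phi(x) \doteq V_t(x, \mu_t, u)$ as a test vector field in~\eqref{eq:nu}; this is admissible because Assumption $(\bm A_1)$ guarantees that $V_t(\cdot, \mu_t, u)$ is of class $\mathcal C^{1}$ in $x$. Applying the definition~\eqref{eq:nu} of $\nu_t$ then yields
\begin{equation*}
H_t(\gamma_t, u) \;=\; \iint p\, V_t(x, \mu_t, u)\,d\gamma_t(x,p) \;=\; \int V_t(x, \mu_t, u)\cdot d\nu_t(x) \;=\; \mathbf{H}_t(\mu_t, \nu_t, u).
\end{equation*}
Applying this identity with $u = u(t)$ and $u = \bar u(t)$, respectively, and substituting into~\eqref{eq:incr}, produces the desired formula~\eqref{eq:incr2}, with the remainder $\mathcal{O}(u, \bar u; \lambda^2)$ inherited verbatim from the earlier proposition.

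The main (and really only) subtlety is to make sure we are pairing $\nu_t$ with an admissible test function. Strictly speaking, the defining identity~\eqref{eq:nu} was written for $\phi \in \mathcal C^1(\R^n; \R^n)$; the pairing $\langle \nu_t, \phi\rangle = \int p\, \phi(x)\,d\gamma_t(x,p)$ is, however, well defined for any Borel map $\phi$ that is integrable with respect to the scalar measure $|p|\,d\gamma_t(x,p)$. Since $\gamma_t$ has compact support (its support is contained in the graph of the continuous map $x \mapsto -D_\mu \ell(\mu_T)(X_{T,t}^{-1}(x))$ transported backwards by the Hamiltonian flow), and $V_t(\cdot,\mu_t,u)$ is continuous, the pairing is unambiguous and the identity extends by a standard density argument. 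No further estimates on $\lambda$ are needed, as the quadratic remainder term is already contained in~\eqref{eq:incr}.
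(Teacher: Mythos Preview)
Your proof is correct and follows exactly the route the paper intends: the authors state that formula~\eqref{eq:incr} ``is trivially reformulated in terms of a solution to the adjoint system,'' and your argument---using $\pi^{1}_{\sharp}\gamma_{t}=\mu_{t}$ together with the defining relation~\eqref{eq:nu} to identify $H_{t}(\gamma_{t},u)$ with $\mathbf H_{t}(\mu_{t},\nu_{t},u)$---is precisely that reformulation spelled out. The extra care you take about admissibility of $V_{t}(\cdot,\mu_{t},u)$ as a test function is sound (compact support of $\gamma_{t}$ suffices) and goes slightly beyond what the paper writes.
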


\begin{theorem}[PMP in terms of the adjoint system]
  \label{thm:pmp2}
  Assume that \((\bm A_{1,2})\) hold, and \( \vartheta\in \mathcal P_{c}(\mathbb{R}^{n}) \).
  Let $(\mu,u)$ be an optimal pair for $(P)$.
  Then \( u \) satisfies, for a.e. $t\in I$, the maximum condition
    \begin{equation}
    \mathbf H_{t}\left(\mu_t,\nu_{t}, u(t)\right) = \max_{\upsilon \in U} \mathbf H_{t}(\mu_t,\nu_{t}, \upsilon),\label{eq:maximum-cond-limit2}
    \end{equation}
    where \(\nu\) is a unique solution of the adjoint system~\eqref{eq:adjoint}, \eqref{eq:adj_term} and \( \textbf H_{t} \) is defined by~\eqref{eq:hamilt2}.
\end{theorem}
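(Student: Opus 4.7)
The plan is to recognize that Theorem~\ref{thm:pmp2} is essentially a reformulation of Theorem~\ref{thm:pmp} with the dual variable repackaged. First I would take the test function $\phi(x) = V_{t}(x, \mu_{t}, \upsilon)$ in the defining relation \eqref{eq:nu} of $\nu_{t}$ and, using the projection identity \eqref{eq:gamma-mu}, compute
\[
H_{t}(\gamma_{t}, \upsilon) = \iint p\, V_{t}(x, \pi^{1}_{\sharp}\gamma_{t}, \upsilon)\d\gamma_{t}(x,p) = \int V_{t}(x, \mu_{t}, \upsilon) \cdot \d\nu_{t}(x) = \mathbf H_{t}(\mu_{t}, \nu_{t}, \upsilon),
\]
for every $\upsilon \in U$ and every $t \in I$. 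This identity shows that the pointwise maximum conditions \eqref{eq:maximum-cond-limit} and \eqref{eq:maximum-cond-limit2} are literally the same statement, so the claim would follow at once from Theorem~\ref{thm:pmp}.

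A self-contained derivation, avoiding any reference to Theorem~\ref{thm:pmp}, would replicate its proof verbatim with the new increment formula \eqref{eq:incr2} in place of \eqref{eq:incr}. Optimality of $u$ gives $\mathcal{I}[u^{\lambda}] - \mathcal{I}[u] \geq 0$ for every $\bar u \in \mathcal{U}$ and every $\lambda \in [0,1]$. Substituting \eqref{eq:incr2}, dividing by $\lambda > 0$, and passing to the limit $\lambda \to 0^{+}$---which is legitimate since the remainder $\mathcal{O}(u, \bar u; \lambda^{2})$ is uniform in $\bar u$ over the compact set $\mathcal{U}$---I would obtain
\[
\int_{0}^{T} \mathbf H_{t}(\mu_{t}, \nu_{t}, \bar u(t))\d t \leq \int_{0}^{T} \mathbf H_{t}(\mu_{t}, \nu_{t}, u(t))\d t \qquad \forall\, \bar u \in \mathcal{U}.
\]
An appeal to Filippov's measurable selection lemma, just as in the proof of Theorem~\ref{thm:pmp}, would then upgrade this integral inequality to the pointwise maximum condition \eqref{eq:maximum-cond-limit2}.

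The only point requiring care is the Filippov selection step, which needs $(t, \upsilon) \mapsto \mathbf H_{t}(\mu_{t}, \nu_{t}, \upsilon)$ to be a Carath\'eodory map. This is immediate from the continuity of $V$ in $(x, \mu, u)$ afforded by $(\bm A_{1})$ together with the temporal regularity of $\mu$ and of the signed-measure-valued curve $\nu$ (whose well-posedness and continuity are secured by Proposition~\ref{prop:unique} and Theorem~\ref{thm:balance}); so no new technical hurdle arises beyond what is already addressed in the proof of Theorem~\ref{thm:pmp}.
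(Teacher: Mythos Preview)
Your proposal is correct and matches the paper's own treatment: the paper states that Theorem~\ref{thm:pmp2} is a ``trivial reformulation'' of Theorem~\ref{thm:pmp} via the defining relation~\eqref{eq:nu} (together with~\eqref{eq:gamma-mu}), which is precisely the identity $H_{t}(\gamma_{t},\upsilon)=\mathbf H_{t}(\mu_{t},\nu_{t},\upsilon)$ you write down. Your optional self-contained route through the increment formula~\eqref{eq:incr2} and Filippov's lemma is likewise valid and simply replays the proof of Theorem~\ref{thm:pmp} with the Hamiltonian recast in the adjoint variables.
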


\begin{remark}
  \label{rem:ac}
  Since the adjoint system~\eqref{eq:adjoint}, \eqref{eq:adj_term} has a unique solution \( \nu_{t} \), it must coincide with the one given by~\eqref{eq:nu}.
  In particular, \( \nu_{t} \) acts on test functions \( \phi\in \C^{1}(\R^{n};\R^{n}) \) by the rule
  \begin{equation}
    \label{eq:ac}
  \langle\nu_{t},\phi\rangle=\int p \, \phi(x)\d\gamma_{t}(x,p) = \int\Big(\int p\d \gamma_{t}^{x}(p)\Big)\phi(x)\d\mu_{t}(x),
\end{equation}
where \( \gamma^{x}_{t} \) is the disintegration of \( \gamma_{t} \) with respect to \( \mu_{t} \) {(see~\cite[Theorem 5.3.1]{AGS})}.
If the initial measure \( \vartheta \) is absolutely continuous with respect to the Lebesgue measure \( \mathcal{L}^{n} \), then so are all \( \mu_{t} \), \( t\in I \) (thanks to the representation \( \mu_{t} = X_{t\sharp}\vartheta \)).
Now,~\eqref{eq:ac} implies that every \( \nu_{t} \), \( t\in I \), must be absolutely continuous as well.

The discussed fact has important consequences, which answer the challenges outlined by Remark~\ref{rem:num-pitfall}:
\begin{enumerate}
\item In contrast to the Hamiltonian continuity equation \eqref{eq:hamsys} as a whole, the adjoint system is solvable numerically.

\item While handling the adjoint equation, we deal with a system of $n+1$ first-order hyperbolic PDEs, each one ``living'' on $\R^n$. Solving this system is less computationally expensive than treating 
a single equation on $\R^{2n}$.
\end{enumerate}
\end{remark}

\subsection{Linear case}

Now, we establish a connection between Theorem~\ref{thm:pmp2} and the well-known version of PMP for \(\mu\)-independent vector fields $V_{t}(x,\mu,u)=V_{t}(x,u)$ (see, e.g.,~\cite{Pogodaev2019, Bonnet2021AMT}). For such fields, the part of adjoint state is played by a solution $(t, x)\mapsto \psi_t(x)$ of a single non-conservative transport equation
  \begin{equation}
  \partial_t \psi + \nabla_{x} \psi \, V_{t} =0.\label{eq:psi-loc}
  \end{equation}
It is reasonable to expect that, under sufficient regularity, the adjoint system \eqref{eq:adjoint} boils down to \eqref{eq:psi-loc}. This ansatz is confirmed by the following
\begin{proposition}\label{prop:mu-p-local}
  Assume that \((\mathbf{A_{1,2}})\) hold, \( x\mapsto\frac{\delta\ell}{\delta\mu}(\mu,x) \) is of class \( \mathcal{C}^{2} \), \( \vartheta\in \mathcal P_{c}(\mathbb{R}^{n}) \) and \( V_{t}(x,\mu,u) = V_{t}(x,u) \).
  Let $u \in \mathcal U$, and \( \mu_{t} \) and \( \nu_{t} \) be the corresponding solutions of~\eqref{eq:conteq} and~\eqref{eq:adjoint}, \eqref{eq:adj_term}, respectively. Then, for a.e. $t\in I$,
  \begin{equation}
      \label{eq:mu-p} \frac{\d \nu_{t}}{\d \mu_{t}} = \nabla_{x}\psi_t,
  \end{equation}
  where $(t, x) \mapsto \psi_t(x)$ is a solution of the transport equation \eqref{eq:psi-loc} with the terminal condition
  \begin{equation}
 \psi_T = - \frac{\delta\ell}{\delta\mu}(\mu_{T}).\label{eq:psi-loc-fin}
  \end{equation}
\end{proposition}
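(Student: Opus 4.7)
\medskip

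The plan is to verify directly that the candidate measure $\nu^\star_t \doteq \nabla_x\psi_t \cdot \mu_t$, viewed as a row-vector measure with components $(\nu^\star_i)_t = (\partial_{x_i}\psi_t)\mu_t$, solves the adjoint system~\eqref{eq:adjoint} with terminal condition~\eqref{eq:adj_term}. Uniqueness (Proposition~\ref{prop:unique}) will then force $\nu_t=\nu^\star_t$, which is exactly the claim~\eqref{eq:mu-p}.

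First I would set up the regularity of $\psi$. Since $V$ does not depend on $\mu$, the characteristic flow $P$ of $v_t(x)=V_t(x,u(t))$ is the flow of an ordinary $\mathcal C^{1,1}$ vector field, and by the properties recalled in Section~\ref{subsec:vfields} the maps $P_{t,s}$ are $\mathcal C^2$ diffeomorphisms. The transport equation~\eqref{eq:psi-loc} is solved explicitly by $\psi_t(x)=\psi_T(P_{t,T}(x))=-\frac{\delta\ell}{\delta\mu}(\mu_T,P_{t,T}(x))$, so the $\mathcal C^2$ assumption on $x\mapsto \frac{\delta\ell}{\delta\mu}(\mu,x)$ yields $\psi_t\in \mathcal C^2(\R^n)$ uniformly in $t$, with classical time derivative obeying $\partial_t\psi_t = -\nabla_x\psi_t\cdot V_t$.

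Next, for a test function $\phi\in \mathcal C^\infty_c(\R^n)$, I would compute $\tfrac{d}{dt}\langle (\nu^\star_i)_t,\phi\rangle = \tfrac{d}{dt}\int \phi\,\partial_{x_i}\psi_t\d\mu_t$ by splitting into the $\psi$-part and the $\mu$-part. For the $\mu$-part, I apply the continuity equation for $\mu$ in weak form, which contributes $\int \nabla_x(\phi\,\partial_{x_i}\psi_t)\cdot V_t\d\mu_t$. For the $\psi$-part, I differentiate~\eqref{eq:psi-loc} in $x_i$ (legal thanks to the $\mathcal C^2$-regularity), obtaining $\partial_t\partial_{x_i}\psi = -\sum_j \partial_{x_i}\partial_{x_j}\psi\,V^j - \sum_j \partial_{x_j}\psi\,\partial_{x_i}V^j$. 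After adding the two contributions, the second-order terms in $\psi$ cancel, leaving
\begin{equation*}
 \tfrac{d}{dt}\langle (\nu^\star_i)_t,\phi\rangle = \int \nabla_x\phi\cdot V_t \d(\nu^\star_i)_t - \sum_j \int \phi\,\partial_{x_i}V^j_t\d(\nu^\star_j)_t,
\end{equation*}
which is precisely the weak form of~\eqref{eq:adjoint} in the $\mu$-independent case (where the kernels $m^j_i$ vanish because $D_\mu V\equiv 0$).

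Finally, I would check the terminal condition: by Definition~\ref{def:intrinsic} and the choice $\psi_T=-\tfrac{\delta\ell}{\delta\mu}(\mu_T)$, one has $\nabla_x\psi_T = -D_\mu\ell(\mu_T)$, so $\nu^\star_T = -D_\mu\ell(\mu_T)\cdot\mu_T$, matching~\eqref{eq:adj_term} in the Radon–Nikodym sense. Proposition~\ref{prop:unique} applied to the integrable, compactly supported curve $\nu^\star$ then concludes the proof. I expect the only delicate point to be justifying the classical differentiations of $\psi$ under the integral sign; this is handled by the explicit characteristic representation of $\psi$ together with the $\mathcal C^2$ hypothesis on $\frac{\delta\ell}{\delta\mu}$, while the compactness of $\spt\mu_t$ takes care of any integrability issues at infinity.
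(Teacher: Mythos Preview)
Your plan is essentially the same as the paper's: construct the candidate $\nu^\star_i=\partial_{x_i}\psi\,\mu$, verify it satisfies the weak form of~\eqref{eq:adjoint} (with $D_\mu V=0$), check the terminal condition via $D_\mu\ell=D_x\tfrac{\delta\ell}{\delta\mu}$, and conclude by uniqueness (Proposition~\ref{prop:unique}).

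The one technical point worth flagging is the step where you differentiate~\eqref{eq:psi-loc} in $x_i$ and freely swap $\partial_t\partial_{x_i}\psi=\partial_{x_i}\partial_t\psi$. Since $u\in L^\infty(I;U)$ is merely measurable in $t$, the vector field $v_t=V_t(\cdot,u(t))$ and hence $\psi_t$ are only absolutely continuous in $t$; the identity $\partial_t\psi=-\nabla_x\psi\cdot v$ holds only for a.e.\ $t$, and differentiating such an a.e.\ relation in $x$ requires justification. The paper handles this not by the characteristic representation alone but by mollifying in time: it sets $\psi^\epsilon_t(x)=\int\eta_\epsilon(t-s)\psi_s(x)\,ds$, for which $\partial_{x_i}\partial_t\psi^\epsilon=\partial_t\partial_{x_i}\psi^\epsilon$ is automatic, performs your cancellation computation with $\nu^\epsilon_i=\partial_{x_i}\psi^\epsilon\,\mu$, and then passes to the limit $\epsilon\to0$ using the convergence $\int_I\sup_x|\partial_\alpha\psi^\epsilon-\partial_\alpha\psi|\,dt\to0$ for the relevant derivatives. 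Your outline is correct, but this mollification device is what makes the ``delicate point'' you identify go through cleanly.
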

\begin{proof} 
  It is clear that the representation \eqref{eq:mu-p}, \eqref{eq:psi-loc-fin} does agree with the terminal condition \eqref{eq:adj_term}, since 
  \[
  -\nabla_x \psi_T \doteq D_{x}\frac{\delta \ell}{\delta\mu}(\mu_T) \doteq  D_{\mu}\ell(\mu_T).
  \]
  Due to the uniqueness of a solution to \eqref{eq:adjoint}, we only need to formally check that $\nu_i \doteq \partial_{x_{i}}  \psi \, \mu$, \( 1 \le i \le m \), meets the identity \eqref{eq:adjoint-weak} with the vector field $V_t(x, \mu_{t}, u(t)) = v_t(x)$.

  A solution of (\ref{eq:psi-loc}), (\ref{eq:psi-loc-fin}) can be written explicitly as \( \psi_{t}(x) = -\frac{\delta\ell}{\delta\mu}(\mu_{T}, P_{t,T}(x)) \), where \( P \) is the flow of \( v \).
  This formula, together with our assumptions, implies that \( \psi \) admits the partial derivatives \( \partial_{x_i}\psi \), \( \partial_{x_i}\partial_{x_j}\psi \) and \( \partial_{x_i}\partial_{t}\psi \) for all \( 1\le i,j \le m \).
  These derivatives are at least measurable in \( t \), continuous in \( x \) and locally bounded.
  Take the standard mollification kernel \( \eta_{\epsilon}\colon \mathbb{R}\to \mathbb{R} \) and consider the convolution
  \[
    \psi^{\epsilon}_{t}(x) = \int \eta_{\epsilon}(t-s)\psi_{s}(x)\d s.
  \]
  It is easy to see that \( \partial_{x_i}\partial_t\psi^{\epsilon} = \partial_t\partial_{x_i}\psi^{\epsilon} \) and \( \partial_{\alpha}\psi^{\epsilon} \to \partial_{\alpha}\psi \) as \( \epsilon\to 0 \) in the sense that
  \begin{equation}
    \label{eq:convergence}
    \int_{I} \sup_{x}  \left\vert \partial_{\alpha} \psi^{\epsilon} - \partial_{\alpha}\psi \right\vert\d t \to 0,
  \end{equation}
  where \( \partial_{\alpha} \) denotes any of the derivatives \( \partial_{x_i} \), \( \partial_{x_i}\partial_{x_j} \), \( \partial_{x_i}\partial_t \).
  Let \( \nu^{\epsilon}_i \doteq \partial_{x_i}\psi^{\epsilon}\mu \).
  Then, we can formally write
  \[
    \partial_t\nu_i^{\epsilon} = \partial_{t}(\partial_{x_i}\psi^{\epsilon}\mu) =\partial_{x_i}(\partial_t\psi^{\epsilon})\mu - \partial_{x_i} \psi^\epsilon \sum_j \partial_{x_j}(v^j\mu).
  \]
  More precisely, for any test function \( \phi \in C^\infty_c(I \times \mathbb{R}^n;\mathbb{R}) \), we have
  \begin{align*}
    -\left\langle \partial_{x_i}\psi^{\epsilon}\partial_{x_j}(v^j \mu),\phi \right\rangle
    &=  -\left\langle \partial_{x_j}(v^j\mu),\phi \, \partial_{x_i}\psi^{\epsilon}\right\rangle\\
    &=\left\langle v^j\mu, \partial_{x_j}\phi \, \partial_{x_i}\psi^{\epsilon} + \phi \, \partial_{x_i}\partial_{x_j}\psi^{\epsilon} \right\rangle\\
    &= \left\langle v^j\nu_i^{\epsilon},\partial_{x_j}\phi \right\rangle + \left\langle \mu, \phi \, \partial_{x_i}(v^j \, \partial_{x_j}\psi^{\epsilon}) \right\rangle - \left\langle \nu_j^{\epsilon}, \phi \ \partial_{x_i} v^j\right\rangle.
  \end{align*}
  Therefore,
  \begin{align*}
    \left\langle \partial_t\nu^{\epsilon}_i,\phi \right\rangle
    = \left\langle \mu, \partial_{x_i}\left(\partial_t\psi^{\epsilon} + \sum_{j}v^j \, \partial_{x_j}\psi^{\epsilon}\right) \, \phi \right\rangle
    &- \left\langle \sum_{j}\partial_{x_j}(v^j\nu^{\epsilon}_{i}),\phi \right\rangle- \left\langle \sum_{j}\partial_{x_i} v^j \nu^{\epsilon}_{j},\phi \right\rangle.
  \end{align*}
  It remains to use~(\ref{eq:convergence}) for passing to the limit as \( \epsilon\to 0 \).
  The first term in the right-hand side vanishes thanks to~(\ref{eq:psi-loc}), so we get
  \[
    \partial_t\nu_i + \sum_j\partial_{x_j}(v^j\nu_i) = - \sum_j\partial_{x_i}v^j\nu_j,
  \]
  in the sense of distributions.
  Since \( D_{\mu}V = 0 \), we conclude that \( \nu \) does satisfy (\ref{eq:adjoint}).
\end{proof}

\section{Descent method}
\label{sec:descent}

Now, we are able to construct an algorithm for the numerical solution of Problem $(P)$ with vector field as in~\eqref{eq:VF-specif}. {
Note that similar algorithms were earlier proposed for solving classical~\cite{arguchintsevOptimalControlNonlocal2009} and stochastic~\cite{annunziatoFokkerPlanckControl2013} optimal control problems.
}

\subsection{Algorithm}
\label{subsec:alg}
Let \( u \) be a reference control, \( \mu_{t} \) and \( \nu_{t} \) be the corresponding trajectory and co-trajectory.
Construct the target control as follows:
\begin{align*}
  \bar u(t) \doteq \arg\max_{\upsilon\in U} \mathbf H_{t}(\mu_{t},\nu_{t},\upsilon) = \arg\max_{\upsilon\in U}\sum_{j=1}^{m}\upsilon_{j}\int V^{j}_{t}(x,\mu_{t}) \cdot\d\nu_{t}(x).
\end{align*}
The increment formula~\eqref{eq:incr2} shows that \( 
\bar u - u \) is a descent direction.
Let us introduce the functional
\begin{align}
\mathcal{E}[u] &\doteq \left\langle \bar u- u, d[u] \right\rangle_{L^{2}}{\doteq \int_0^T \langle\bar u-u,d[u]\rangle\d t},\mbox{ where}\notag\\
d_j[u](t) & \doteq \int V^j_{t}(x, \mu_t[u])\cdot \d\nu_{t}[u](x), \quad t \in I, \quad 1\leq j \leq m.\label{d} 
\end{align}

It is clear that $\mathcal{E}[u] \geq  0$ and $\mathcal{E}[u] = 0$ implies that the pair $(\mu[u], u)$ satisfies the PMP. In other words, \( \mathcal{E} \) measures 
the 
``non-extremality'' of $u$.

Now, we can use the descent direction $\bar{u}-u$ for developing the following version of the classical backtracking algorithm.

\begin{algorithm}[H]
\caption{Descent method with backtracking ($k$-th iteration)}\label{alg:backtracking}
\begin{algorithmic}[1]
\Require \( u^k \in \mathcal{U} \), \( c,\theta\in (0,1) \).
\State Compute the trajectories \( \mu^k \) (forward in time) and \( \nu^{k} \) (backward in time starting from \eqref{eq:adj_term}) of the initial and the adjoint systems corresponding to \( u^{k} \), and take
 $d^{k}(t)\doteq (d_1, \ldots, d_m)[u^k](t),$
where $d_j[u]$ are introduced in \eqref{d}.

\State Compute the target control
\begin{align}\label{u-target}
  \bar u^{k}(t) \doteq  \arg\max_{\upsilon\in U}\left(\upsilon\cdot d^{k}(t)\right).
\end{align}

\State Set $\lambda^{k}\doteq \max\left\{\theta^{j}\mid j \geq 0,\; \mathcal{I}\left[u^{k}+\theta^{j}(\bar u^{k}-u^{k})\right]-\mathcal{I}[u^{k}]\leq c\, \theta^{j}\langle u^{k}-\bar u^{k}, d^{k} \rangle_{L^2}\right\}$.
\State Set \( u^{k+1}\doteq u^{k} + \lambda^{k}(\bar u^{k}-u^{k}) \).
\end{algorithmic}
\end{algorithm}

The convergence analysis of the algorithm is provided by the following theorem.

\begin{theorem}
  For any initial control $u^0\in \mathcal{U}$, the sequence $\{u^k\}$ generated by the algorithm
  \begin{enumerate}
      \item is monotone in the sense that
      \begin{align*}
      \mathcal{E}[u^k]\neq 0 \ \Rightarrow \ \mathcal{I}[u^{k+1}] < \mathcal{I}[u^{k}], \quad \mbox{ and } \quad
      \mathcal{E}[u^k]= 0 \ \Rightarrow \ \mathcal{I}[u^{k+1}] = \mathcal{I}[u^{k}]
      \end{align*}
      \item converges in the sense that
      \(
        \mathcal{E}[u^k]\to 0\) as \(k\to\infty.
      \)
  \end{enumerate}
\end{theorem}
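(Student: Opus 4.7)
The cornerstone is Theorem~\ref{thm:increment}. Using the affine-in-$u$ structure~\eqref{eq:VF-specif}, one has $\mathbf H_t(\mu_t,\nu_t,\bar u(t))-\mathbf H_t(\mu_t,\nu_t,u(t)) = \langle \bar u(t)-u(t),d[u](t)\rangle$, so the increment formula reads
\[
  \mathcal I[u^\lambda]-\mathcal I[u] = -\lambda\,\mathcal E[u] + \mathcal O(u,\bar u;\lambda^2).
\]
Since $\mathcal U$ is compact, a universal constant $M>0$ (depending only on the problem data and $\vartheta$) satisfies $\mathcal I[u^\lambda]-\mathcal I[u] \le -\lambda\,\mathcal E[u] + M\lambda^2$ for all $u,\bar u\in\mathcal U$ and $\lambda\in[0,1]$. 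This quadratic upper bound will drive everything.

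\textbf{Termination and monotonicity.} The quadratic bound implies that the Armijo inequality $\mathcal I[u^\lambda]-\mathcal I[u]\le -c\lambda\mathcal E[u]$ holds whenever $\lambda\le (1-c)\mathcal E[u]/M$. Consequently, when $\mathcal E[u^k]>0$, Step~3 terminates after finitely many trials and
\[
  \lambda^k \ge \min\!\Bigl\{1,\ \tfrac{\theta(1-c)\mathcal E[u^k]}{M}\Bigr\}.
\]
Armijo then yields
\[
  \mathcal I[u^k]-\mathcal I[u^{k+1}] \ge c\lambda^k\,\mathcal E[u^k] \ge c\min\!\Bigl\{\mathcal E[u^k],\ \tfrac{\theta(1-c)}{M}\mathcal E[u^k]^{2}\Bigr\} > 0,
\]
establishing the strict decrease. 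When $\mathcal E[u^k]=0$, the pair $(\mu^k,u^k)$ already satisfies the maximum principle of Theorem~\ref{thm:pmp2}; we interpret the algorithm in the natural way, picking $\bar u^k=u^k$ as an admissible element of the $\arg\max$ in~\eqref{u-target}, so that $u^{k+1}=u^k$ and $\mathcal I[u^{k+1}]=\mathcal I[u^k]$.

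\textbf{Convergence.} Sublinearity of $V$ forces every trajectory $t\mapsto\mu_t[u]$, $u\in\mathcal U$, into a fixed compact subset of $\mathcal P_c(\mathbb R^n)$; continuity of $\ell$ then gives $\inf_{u\in\mathcal U}\mathcal I[u]>-\infty$. Thus $\{\mathcal I[u^k]\}$ is monotone and bounded, and
\[
  \sum_{k\ge 0}\bigl(\mathcal I[u^k]-\mathcal I[u^{k+1}]\bigr) \le \mathcal I[u^0]-\inf_{\mathcal U}\mathcal I < \infty.
\]
Combined with the per-step lower bound, this forces $\sum_k \min\{\mathcal E[u^k],\mathcal E[u^k]^{2}\}<\infty$, whence $\mathcal E[u^k]\to 0$.

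\textbf{Main obstacle.} Everything hinges on the \emph{uniformity} of $M$ across all iterates and all target controls. The genuine $\mathcal O(\lambda^2)$ remainder in Theorem~\ref{thm:increment}---rather than a pointwise $o(\lambda)$, as e.g.\ in~\cite{BonnetFrankowska2021a}---is exactly what Theorem~\ref{thm:dflow} provides, its constants being controlled by $\rho,C_\rho,r,L_r$, which depend only on the compact $\mathcal U$ and $\spt\vartheta$. Without this uniformity, the lower bound on $\lambda^k$ would degrade along iterations and the summability argument would collapse.
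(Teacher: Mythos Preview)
Your proof is correct and rests on the same pillars as the paper's: the uniform $\mathcal{O}(\lambda^2)$ remainder in the increment formula (Theorem~\ref{thm:increment}), the Armijo descent, and the boundedness below of $\mathcal{I}$. The packaging differs slightly. The paper argues by contradiction: assuming $\mathcal{E}[u^k]\ge\varepsilon$ along a subsequence, it first shows $\lambda^k\to 0$ there (otherwise $\mathcal{I}[u^k]\to-\infty$), and then uses that $\lambda^k/\theta$ \emph{fails} the Armijo test together with the increment formula to obtain $(1-c)\mathcal{E}[u^k]\le C\lambda^k/\theta$, a contradiction. You instead rearrange that very inequality into the explicit lower bound $\lambda^k\ge\min\{1,\theta(1-c)\mathcal{E}[u^k]/M\}$ and conclude via summability of the decrements. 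Your direct route yields a quantitative per-step descent estimate and makes explicit both the monotonicity claim (part~1) and the lower bound on $\mathcal{I}$, which the paper's proof leaves implicit; otherwise the two arguments are equivalent.
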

\begin{proof}
Let \( e^{k} := \mathcal{E}[u^{k}] \) and assume that \( e^{k}\not\to 0 \) .
In this case, there exists \( \varepsilon>0 \) such that \( e^{k}\geq \varepsilon \) {for all indices $k$ from some countable set \( K\subset \mathbb N \)}.
By the choice of \( \lambda^k \), we obtain, for all \( k\in K \),
\begin{displaymath}
  \mathcal{I}[u^{k}] - \mathcal{I}[u^{k+1}] \geq c\lambda^{k} \left\langle \bar u^{k} - u^{k}, d^{k} \right\rangle_{L^{2}} = c\lambda^{k}e^{k}.
\end{displaymath}
This shows that \( (\lambda^{k})_{k\in K} \to 0 \), because otherwise \( \mathcal{I}[u^{k}] \to -\infty \) up to a subsequence.
For any large \( k \) we have \( \lambda^{k}\leq \theta \).
Hence \( \lambda := \lambda^{k}/\theta \) is an admissible step.
On the other hand,  by Step 4 of the algorithm for such \( \lambda \) we have:
\(
\mathcal{I}\left[u^{k}+\lambda(\bar u^{k}-u^{k})\right]-\mathcal{I}[u^{k}]\ge c\lambda\langle u^{k}-\bar u^{k}, d^{k} \rangle_{L^2},
\)
that is,
\begin{align*}
  -\frac{c\lambda^{k}}{\theta}\left\langle \bar u^{k} - u^{k}, d^{k} \right\rangle_{L^{2}}
  &\leq \mathcal{I}\left[u^{k}+(\lambda^{k}/\theta)(\bar u^{k} - u^{k})\right] - \mathcal{I}[u^{k}]\\
  &= - \frac{\lambda^{k}}{\theta}e^{k} + \mathcal{O}\left(\left(\lambda^{k}/\theta\right)^{2}\right),
\end{align*}
where we use the increment formula~\eqref{eq:incr2} to get the last equality.
Hence $ \frac{(1-c)\lambda^{k}}{\theta}e^{k}\leq C\left(\frac{\lambda^{k}}{\theta}\right)^{2},$
for some \( C>0 \), or equivalently,
\begin{displaymath}
  (1-c)\varepsilon\leq (1-c)e^{k}\leq C\frac{\lambda^{k}}{\theta}.
\end{displaymath}
Since the right-hand side tends to zero, we come to a contradiction.
\end{proof}

\subsection{Implementation}

In the algorithm described in Sect.~\ref{subsec:alg}, the primal and adjoint equations are solved numerically. If the original problem is periodic in space, and the driving vector field has a convolutional structure \eqref{VF-conv},
then, 
for the numeric integration, 
one gives preference to so-called spectral methods~\cite{Boyd2001}.

Assume that the initial measure \( \vartheta\in \mathcal{P}_{c}(\mathbb{R}) \) is absolutely continuous.
This implies that the corresponding trajectories \( \mu_t \) and \( \nu_t \) are absolutely continuous as well, and all ingredients of the algorithm can be recast in terms of their densities $\rho_t$ and $\zeta_t$, respectively. Moreover, since \( \vartheta \) is compactly supported, there exists a segment \( [a,b] \) such that
\(
\spt \mu_t \subset [a,b]\), and \(\spt \nu_t\subset [a,b]\) for all \(t\in I.
\)
This implies that 
\( \mu_{t} \) and \( \nu_t \) can be considered as measures on the circle \( \mathbb{S}^{1} \) 
(i.e. the measures can be view as $2\pi$-periodic in $x$). 

The primal and the adjoint equations can be written in the form:
  \begin{gather}
    \label{eq:simple}
    \partial_{t}\rho + \partial_{x}\left(V(x,\rho)\rho\right) = S(x,\rho),
  \end{gather}
  \[
  V(x,\rho) = f(x)+(K*\rho)(x),\quad
    S(x,\rho) = g(x)\rho(x)+h(x)(M*\rho)(x),\notag\\
  \]
  \( f,g,h,K,M\colon I \times \mathbb{S}^{1}\to \mathbb{R} \) are given functions. 

  Suppose that all the densities are of the class \( L^{2}(\mathbb{S}^{1}) \). Upon substitution of the truncated Fourier series 
  \begin{equation}
  \rho(x,t)=\sum_{n=-\mathcal{N}/2}^{\mathcal{N}/2} \widehat{\rho}_n(t) {\rm e}^{inx},\label{eq:f1}
  \end{equation}
  in (\ref{eq:simple}), the partial differential equation transforms into the system of ODEs
  \begin{equation}
  \frac{d\widehat{\rho}_{n}}{dt} = -in (\widehat{V\rho})_{n}+ \widehat{S}_{n},\quad n\in \mathbb{Z},\quad \| n \| \le \mathcal{N}/2,
  \label{eq:ode}    
  \end{equation}
  where the 
  hat over nonlinear terms 
  denotes their Fourier coefficients, and 
  \begin{equation}
  \widehat{\rho}_{n}(t) = \frac{1}{2\pi}\int_{0}^{2\pi}\rho(x,t)e^{-inx}\d x,
  \label{eq:f2}    
  \end{equation}
 stands for the Fourier coefficients of $\rho(x,t)$. 

 The system (\ref{eq:ode}) can be integrated by any appropriate numerical method (e.g. the Runge-Kutta method). Transformations between the physical and spectral (Fourier) spaces are computed by using the Fast Fourier Transforms (FFT). Multiplications of fields are usually computed in the physical space, derivatives and convolutions are evaluated in the Fourier space.

\subsection{Numerical experiment}

As an example, we consider the paradigmatic model of Kuramoto \cite{kuramoto2003chemical}, which describes an assembly of pairwise interacting homotypic oscillators.
Specifically, we consider an optimization problem in the spirit of \cite{Sinigaglia2021OptimalCO}, in which the goal is to synchronize a continuous oscillatory network by a given time moment \( T \).

The prototypic ODE representing the dynamics of $N$ oscillators takes the form
\begin{align}
    \dot x_i = \omega_i + u_1 +u_2 \, \frac{1}{N}\sum_{j=1}^N\sin(x_j-x_i-\alpha), \quad 1 \leq i \leq N. \label{Kuramoto-N}
\end{align}
Here, $x_i(t) \in \mathbb S^1$ and $\omega_i \in \mathbb{R}$ are the phase and natural frequency of the $i$th oscillator, respectively, $\alpha$ is the phase shift.
Control inputs are $t \mapsto u(t) \doteq (u_1(t), u_2(t))$, where $u_1$ affects the angular velocity, and $u_2$ modulates the connectivity of the network.

As in~\cite{Sinigaglia2021OptimalCO}, we assume that all oscillators have a common natural frequency $\omega$, which, in this case, can be specified as $\omega =0$.
As the number of oscillators $N \to \infty$, the limiting mean-field version of \eqref{Kuramoto-N} is described by the curve $t \mapsto \mu_t \in \mathcal{P}(\mathbb S^1)$ satisfying the nonlocal continuity equation driven by the vector field
\begin{align}
V(x, \mu, u) \doteq u_1+u_2\int_{0}^{2\pi} \sin(y -x -\alpha)\d \mu(y)\doteq  u_1 V^1 + u_2 V^2(x, \mu) .\label{Kuramoto-MF}
\end{align}
Consider the problem of steering the ensemble to a given phase $x_{0} + 2\pi n$, $n \in \mathbb Z$:
\begin{align*}
  \min\mathcal{I}[u] = \ell(\mu_T) \doteq & \,\displaystyle \int_{0}^{2\pi} J(x,x_{0}) \d \mu_T(x),\\
J(x,y)\doteq \frac{1}{2}(\sin x-\sin y)^2+&\,\frac{1}{2}(\cos x -\cos y)^2 = 1 - \cos(x-y).
\end{align*}

To specify the adjoint equation, we compute (see Example~\ref{ex:conv} in Sect.~\ref{subsec:deriv}):
\[
D_{x}V(x,\mu,u) = -u_2\int_{0}^{2\pi} \cos(x -y +\alpha)\d \mu(y), \quad D_{\mu}V(y,\mu,u,x) = u_2 \, \cos(y -x +\alpha),
\]
and \(  D_{\mu}\ell(\mu;x) = \sin(x-x_0) \).
Then, \eqref{eq:adjoint} becomes
\begin{align}
    \partial_t \nu + \partial_x\left(v \, \nu\right)=u_2\left[(K_1*\nu)\mu + (K_2*\mu)\nu \right],\label{Kuramoto-1-dual}
\end{align}
where \( v_{t}(x) = V_{t}(x,\mu_{t},u(t)) \), \( K_1(x) = \cos(-x+\alpha) \), \( K_2(x) = \cos(x+\alpha) \). 

Let us associate \( \mu \) and \( \nu \) with their densities represented as in (\ref{eq:f1}) in terms of the Fourier coefficients \( \widehat{a}_n \) and \( \widehat{b}_n \), respectively. To represent the PDE~\eqref{eq:conteq} in the Fourier space (i.e. in the form (\ref{eq:ode})), notice that the only non-vanishing Fourier coefficients of (\ref{Kuramoto-MF}) are 
$\widehat{V}_0=u_1,\ 
\widehat{V}_1=i \pi u_2\widehat{\mu}_1\exp(i\alpha)$,
and the complex conjugate of the latter one is $\widehat{V}_{-1}$.
This form of $V(x,\mu,u)$ enables us to compute the r.h.s. of (\ref{eq:ode}) exclusively in the Fourier space with no recourse to the physical space, in contrast with the case when applying the pseudospectral methods to the system with a generic $V(x,\mu,u)$.     

In the Fourier space, the nonlocal continuity equation reads
\begin{equation}
\label{eq:eqa}
\frac{d\widehat{a}_n}{dt} =
-inu_{1}\widehat{a}_n + \pi n u_{2} \left( 
\widehat{a}_{1}\widehat{a}_{n-1}{\rm e}^{i\alpha} -
\widehat{a}_{-1}\widehat{a}_{n+1}{\rm e}^{-i\alpha}
\right),\quad n\in \mathbb{Z},
\end{equation}
while the adjoint equation~\eqref{eq:adjoint} and the terminal condition~\eqref{eq:adj_term} become:
{
\begin{align}
\label{eq:eqb}
\frac{d\widehat{b}_{n}}{dt} &= -inu_{1}\widehat{b}_n +\pi n u_2 \left(
\widehat{a}_1\widehat{b}_{n-1}{\rm e}^{i\alpha}-
\widehat{a}_{-1}\widehat{b}_{n+1}{\rm e}^{-i\alpha}
\right)\nonumber\\
&+ \pi u_2\left(
(\widehat{a}_1\widehat{b}_{n-1}+\widehat{b}_{-1}\widehat{a}_{n+1}){\rm e}^{i\alpha}+
(\widehat{a}_{-1}\widehat{b}_{n+1}+\widehat{b}_{1}\widehat{a}_{n-1}){\rm e}^{-i\alpha}
\right),\\ 
\widehat{b}_n(T)& \ = \frac{i}{2}\left(
\widehat{a}_{n-1}(T){\rm e}^{-ix_{0}} -
\widehat{a}_{n+1}(T){\rm e}^{ix_0}
\right),\quad n\in \mathbb{Z}.\notag
\end{align}
}
In order to compute the transformation $\rho(x_j,t)\mapsto\widehat{\rho}_k(t)$ and its inverse, we employ the forward and backward FFTs implemented in the library FFTW \cite{FFTW}.

The problem is considered under the control constraint \( u_1^2+u_2^2\leq 2 \); for the $k$th iteration, the corresponding target control $(\bar u_1^k, \bar u_2^k)$ provided by \eqref{u-target} takes the form: $\frac{\sqrt{2}}{\sqrt{(d_1^k)^2 + (d_2^k)^2}}(d_1^k, d_2^k)$, and the control-update rule reads: $u^{k+1} = u^k+\lambda^k(\bar u^k - u^k)$.


Some computational results 
are presented by Fig.~\ref{fig:kuramoto}.

\begin{figure}[h]
  \begin{center}
\includegraphics[width=0.44\textwidth]{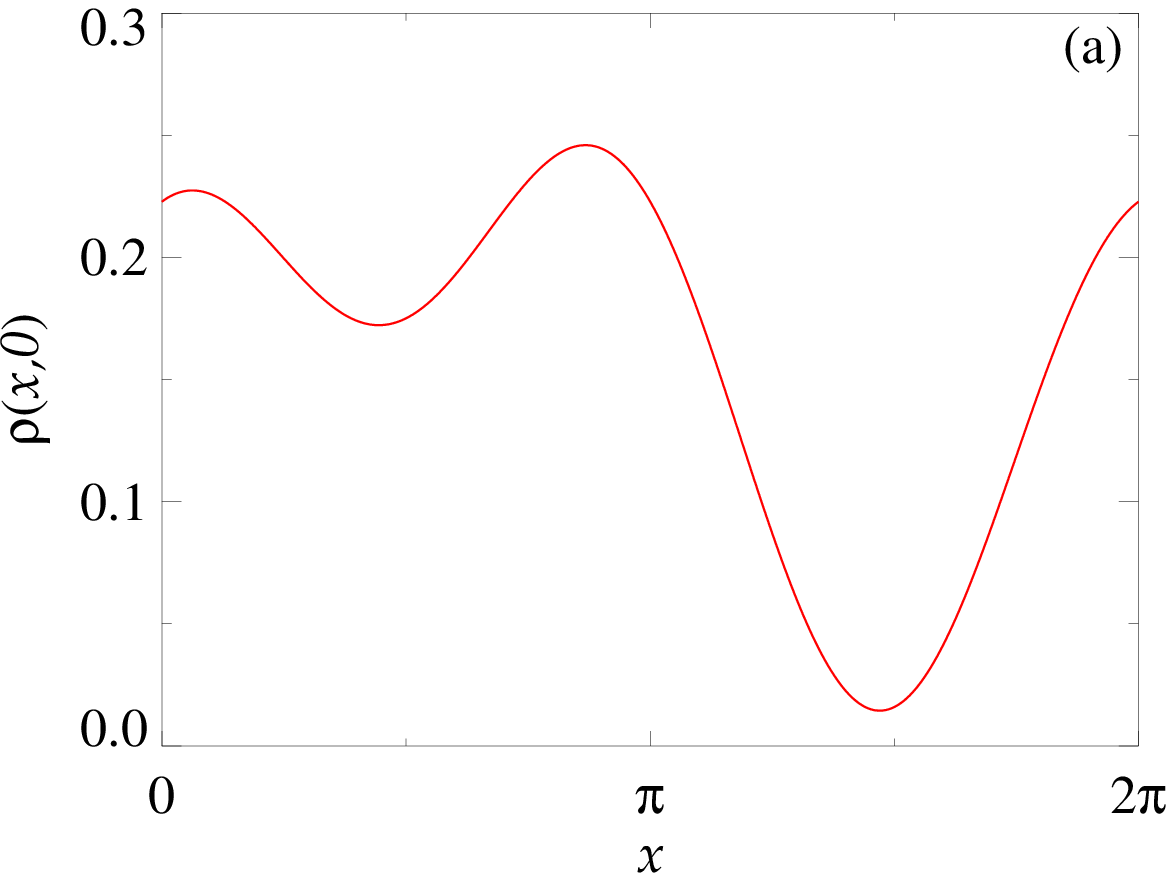}  \includegraphics[width=0.44\textwidth]{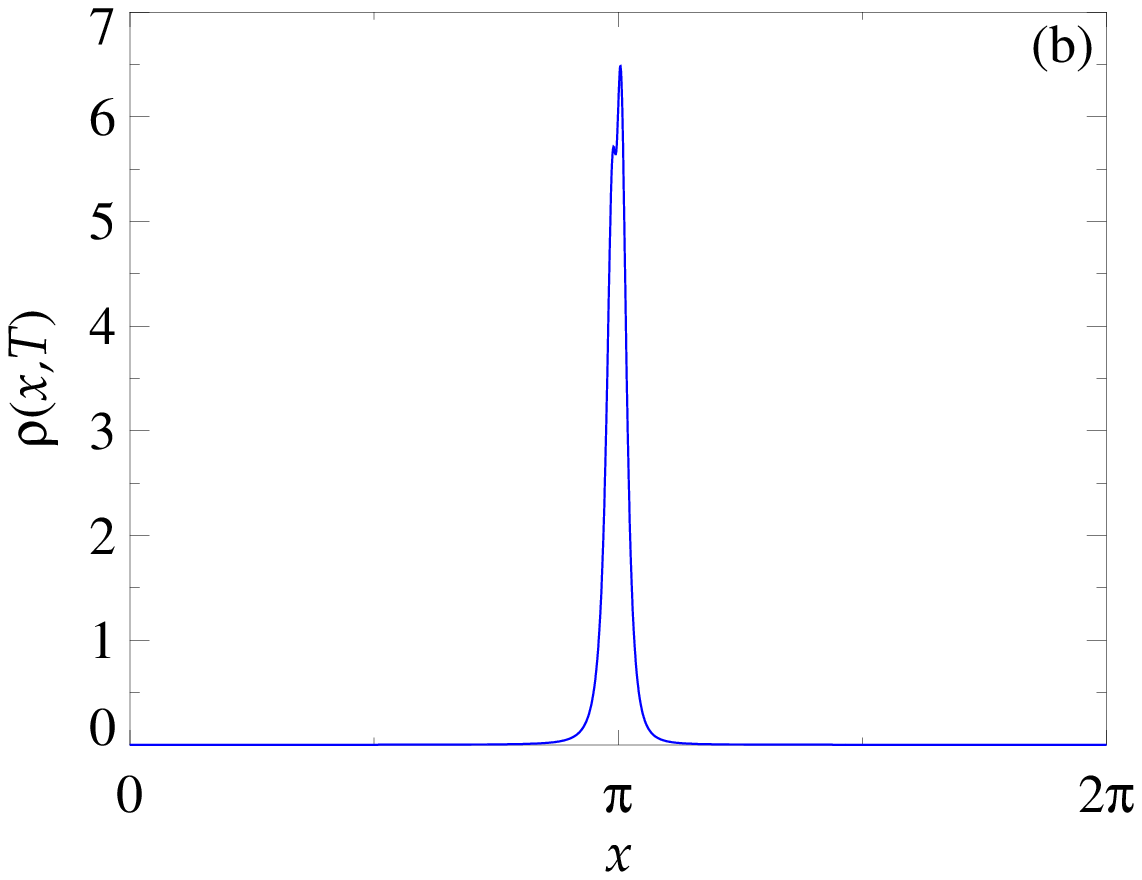}
\includegraphics[width=0.44\textwidth]{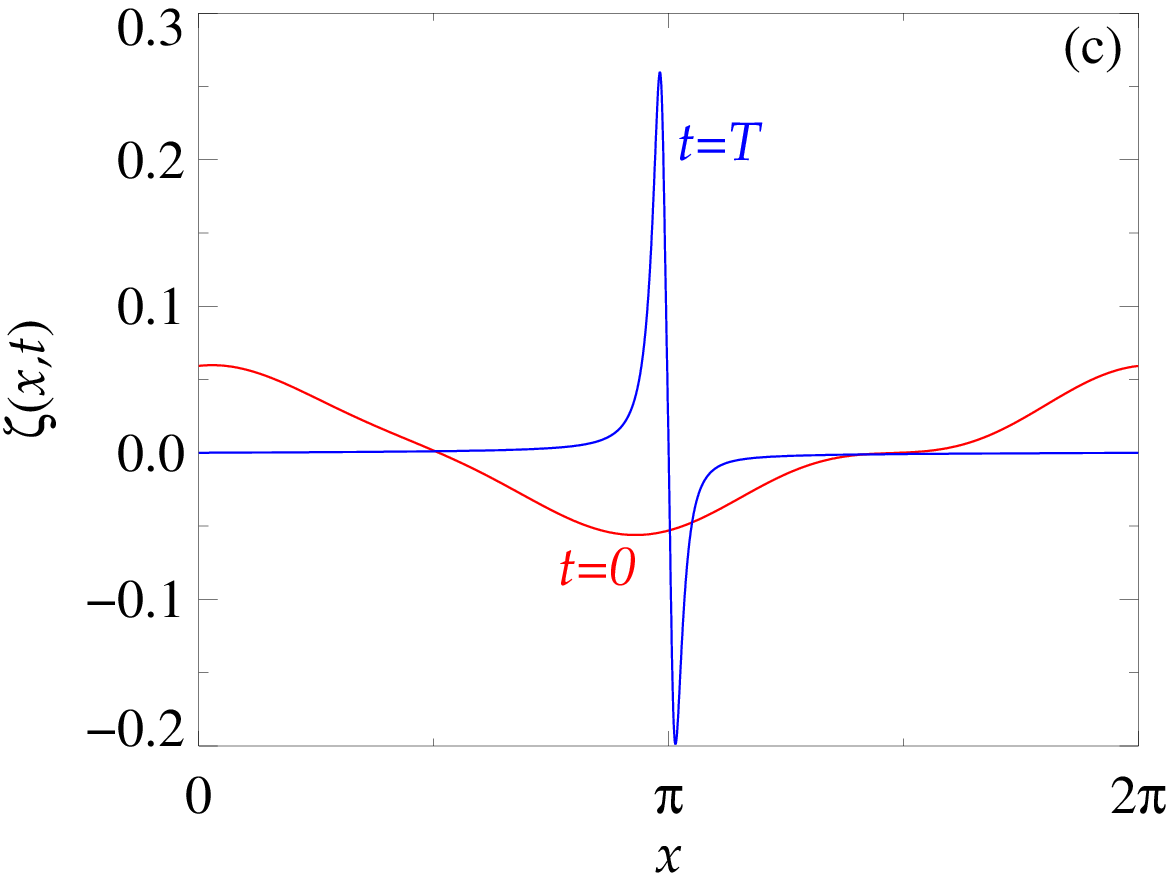}
\includegraphics[width=0.44\textwidth]{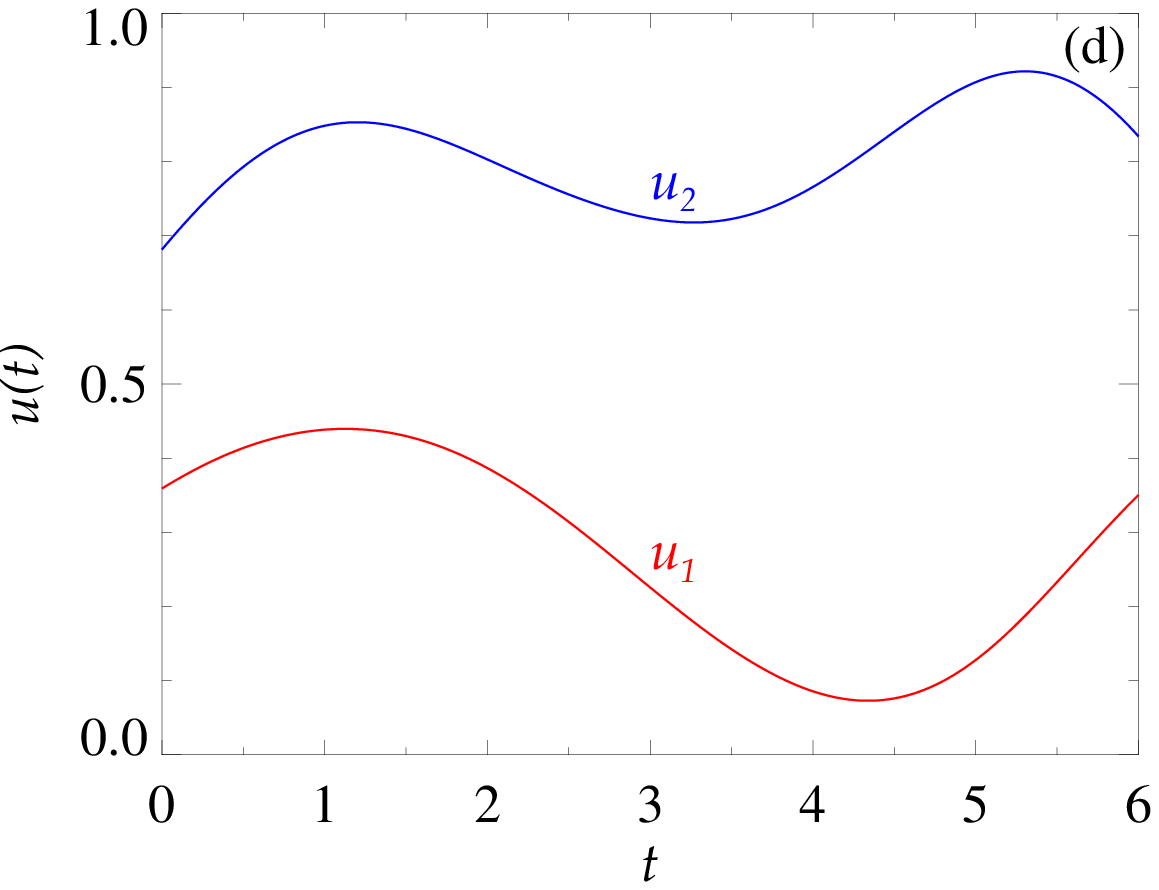}
  \end{center}
  \caption{\small Numerical solution of the synchronization problem for $T=6$,
  $\alpha=0$, $x_0 = \pi$, the initial control $u_1^0=\sqrt{2}\sin(2\pi t),\ u_2^0 =
  \sqrt{2}\cos(2\pi t)$, and the initial distribution $\rho_{0}(x) = \left( 2+\sin x +
  0.8\cos 2x -0.2 \sin 2 x  \right)/(4\pi)$. The trajectory $\rho^k_t$ produced by the algorithm is depicted in (a) at $t=0$ and in (b) at $t=T$.
    The adjoint trajectory $\zeta^k_t$ at the same time moments is presented in (c).
    The corresponding controls $u^k_1(t)$ and $u^k_2(t)$ are shown in (d).
    For these controls, the cost functional is $\mathcal{I}[u^k]\approx 8\cdot10^{-3}$ (cf. $\mathcal{I}[u^0]=1$). 
    In computations, we used $\mathcal{N}=2048$ spatial Fourier harmonics, and $c = 0.01$, 
    $\theta = 0.5$ for determination of $\lambda^k$. 
    The systems of ODEs (\ref{eq:eqa}) and (\ref{eq:eqb}) were solved by the 4th-order Runge-Kutta method with constant time step $\tau=0.001$; 
    stopping criterion: $\lambda^{k}<10^{-2}$. 
    }
  \label{fig:kuramoto}
\end{figure}
  \begin{remark}
    Let us stress several differences between the problem that we solve here and the one addressed in \cite{Sinigaglia2021OptimalCO}.
    First of all, in \cite{Sinigaglia2021OptimalCO} the authors consider the so-called mean-field type controls, i.e., they assume that \( u \) depends not only on \( t \) but also on \( x \). It is clear that this choice greatly improves the controllability of the system.
    Moreover, the system in \cite{Sinigaglia2021OptimalCO} is subject to common noise, which also contributes to the controllability.
    Indeed, let the initial density be given by \( \rho_{0} = 1 + \sin kx \), \( k \ge 2 \).
    Then, the convolution in~\eqref{Kuramoto-MF} vanishes, which means that our control options reduce to shifting the wave \( \rho_0 \) back and forth. On the other hand, under the presence of common noise, the Fourier coefficient corresponding to \( \sin x \) immediately becomes  nonzero and, as a result, the system is self-synchronizing for any positive \( u_2 \).
    A similar effect can be observed if we try to solve~\eqref{eq:conteq}, \eqref{Kuramoto-MF} with a discretization scheme that involves a numerical diffusion (such as the classical Lax-Friedrichs method).
  \end{remark}

  \section{General case}
\label{sec:gen}

In this section, we shall discuss a natural extension of the obtained results to the control-nonlinear case and general cost functional \eqref{eq:cost}.

\subsection{Nonlinear dependence on control}
\label{subsec:gen_nonlin}

To handle the case of nonlinear dependence $u \mapsto V_t(x, \mu, u)$, we shall resort to the standard technique based on the extension of the original class $\mathcal U$ of control signals to a broader space
$
\widetilde{\mathcal U} \doteq
\left\{\eta\in \mathcal P(I\times U): \, \left[(t, u) \mapsto t\right]_{\sharp}\eta = \frac{1}{T}\mathcal L^1\right\}
$
of Young measures \cite{Valadier}.
It is well-known that such an extension provides the linearization of the vector field w.r.t. the driving signal and, in a certain sense, reduces the general model to the above control-affine case.
Recall that \textit{i)} $\mathcal U$ is dense in $\widetilde{\mathcal U}$ due to the embedding $u \mapsto \eta$, $\eta_t=\delta_{u(t)}$, where $t \mapsto \eta_t \in \mathcal P(U)$ is the weakly measurable family of probability measures obtained by disintegration of $\eta$ w.r.t. $\frac{1}{T}\mathcal L^1$;  and \textit{ii)} $\widetilde{\mathcal U}$ is compact in the topology of weak convergence of probability measures (and therefore, in any metric $W_p$, $p\geq 1$)  as soon as $U$ is compact, thanks to the classical Prohorov theorem.

This passage, which is a routine of the mathematical control theory,
leads to the following relaxation of the original dynamics \eqref{eq:conteq}:
\begin{gather}
\partial_t\mu_t + \div \left(\widetilde V_{t}\left(\cdot,\mu_{t},\eta_t\right) \, \mu_t\right) = 0,\quad \mu_0=\vartheta,\label{eq:conteq-relaxed}\\
\widetilde V_{t}\left(x,\mu_{t},\eta_t\right) \doteq \int_{U} V_{t}\left(x,\mu_{t},u\right)\d \eta_t(u)\doteq \langle \eta_t, V_{t}\left(x,\mu_{t},\cdot\right)\rangle;\notag
\end{gather}
the original cost should be reformulated in the corresponding form:
$
\widetilde{\mathcal I} = \ell(\tilde\mu_T),
$
where $\tilde\mu_t$ is a solution of \eqref{eq:conteq-relaxed}.

Observing that the dependence $\omega \mapsto \widetilde V_{t}\left(x,\mu,\omega\right)$ is linear, we 
invite the reader to consider the weak variation
\(
\eta^\lambda = \eta + \lambda(\bar\eta - \eta)
\)
and the respective cost increment $\widetilde{\mathcal I}[\eta^\lambda] - \widetilde{\mathcal I}[\eta]$ in place of \eqref{eq:u-lambda} and \eqref{eq:increment_draft}, and reproduce the arguments of Sect.~\ref{sec:incr} and \ref{sec:adjoint}.
By doing this, one ensures that the resulting increment formula and necessary condition for the optimality of a Young measure $\eta$ keep the form of Theorems~\ref{thm:increment} and \ref{thm:pmp2}, where $V$ and $\mathbf H_{t}$ are replaced by $\widetilde V$ and $\widetilde{\mathbf H}_{t}$, respectively,
\(
\displaystyle\widetilde{\mathbf H}_{t}(\mu, \nu, \omega) \doteq  \int_{U}\mathbf H_{t}(\mu, \nu, u) \d\omega(u),
\)
and the maximum condition \eqref{eq:maximum-cond-limit2} becomes
\begin{align*}
    \widetilde{\mathbf H}_{t}(\tilde\mu_t, \tilde\nu_t, \eta_t) = \max_{\omega \in \mathcal P(U)}\widetilde{\mathbf H}_{t}(\tilde\mu_t, \tilde \nu_t, \omega) \quad &\Leftrightarrow \quad \spt(\eta_t) \subseteq \arg\max_{\upsilon \in U} \mathbf H_{t}(\tilde\mu_t,\tilde\nu_{t}, \upsilon),
\end{align*}
where $\tilde\nu_t$ is the adjoint backward solution associated to $\eta$.
Now, if the addressed control-nonlinear problem $(P)$ does have a usual minimizer $u\in\mathcal{U}$, then PMP for $u$ is restored by taking $\eta$ such that $\eta_t =\delta_{u(t)}$.

\subsection{Running cost}
\label{subsec:gen_run}

If the map $u \mapsto L_t(x, \mu, u)$ is affine, one easily adapts PMP by reformulating the dynamics \eqref{eq:DH} of the Hamiltonian PDE and the Hamiltonian \eqref{eq:hamilt2} as
\begin{equation}
\label{eq:DH-L}
  \vec{H}\doteq
  \begin{pmatrix}
    \displaystyle V_{t}\\[0.2cm]
    \displaystyle- p\, D_x \, V_t - \iint  q \, D_\mu V_t\d \gamma + D_x \, L_{t} + \iint  D_\mu L_t\d \gamma
  \end{pmatrix},
\end{equation}
and $     \displaystyle  \mathbf H_{t} \doteq  \int V_{t}\cdot \d\nu-L_{t}.$
Further details can be found, e.g., in~\cite{BonnetFrankowska2021a}.
The general $u$-nonlinear case refers to the relaxation technique exhibited in Sect.~\ref{subsec:gen_nonlin}.

\section*{Declarations}

\bmhead{Acknowledgments}
We are grateful to the anonymous referees for their valuable comments enabling us to significantly improve the paper.

\bmhead{Conflict of interest}
The authors have not disclosed any competing interests.

\bmhead{Funding} 
RC and MS acknowledge the financial support of the Foundation for Science and Technology (FCT/MCTES) in the framework of the Associated Laboratory -- Advanced Production and Intelligent Systems (AL ARISE, ref. LA/P/0112/2020), the R\&D Unit SYSTEC (Base UIDB/00147/2020 and Programmatic UIDP/00147/2020 funds), and projects RELIABLE -- Advances in control design methodologies for safety critical systems applied to robotics (ref. PTDC/EEI-AUT/3522/2020) and MLDLCOV -- Impact of confinement measures related to COVID-19 on mobility, air pollution and macroeconomic indicators in Portugal: an approach in Machine Learning (ref. DSAIPA/CS/0086/2020), the latter through the program INCO.2030 -- National Initiative for Digital Competences e.2030.
A part of the simulations was carried out with the OBLIVION Supercomputer (at the High Performance Computing Center, University of Évora) funded by the ENGAGE SKA Research Infrastructure (reference POCI-01-0145-FEDER-022217 - COMPETE 2020 and the FCT, Portugal) in the framework of the FCT calls for computational projects (refs. 2021.09815.CPCA and 2022.15706.CPCA.A2).

\bibliographystyle{abbrv}
\bibliography{sn-bibliography}

\end{document}